\newcommand{\norm}[1]{\left\lVert #1 \right\rVert}
\DeclareMathOperator*{\argmax}{arg\,max}
\DeclareMathOperator*{\argmin}{arg\,min}
\newtheorem{definition}{Definition}[section]
\newtheorem{theorem}{Theorem}[section]
\newtheorem{lemma}{Lemma}[section]
\newtheorem{proposition}{Proposition}[section]
\newtheorem{example}{Example}[section]
\newtheorem{corollary}{Corollary}[section]
\newtheorem{assumption}{Assumption}[section]
\newtheorem{remark}{Remark}[section]
\DeclareMathOperator{\esssup}{ess\hspace{0.1em}sup}
\title{Group-averaged Markov chains: mixing improvement}
\author[1]{Michael C.H. Choi\thanks{Email: mchchoi@nus.edu.sg, corresponding author}}
\author[1]{Youjia Wang\thanks{Email: e1124868@u.nus.edu}}
\affil[1]{Department of Statistics and Data Science, National University of Singapore, Singapore}
\date{\today}
\begin{document}

\maketitle

\begin{abstract}
For Markov kernels $P$ on a general state space $\mathcal{X}$, we introduce a new class of averaged Markov kernels $P_{da}(G,\nu)$ of $P$ induced by a group $G$ that acts on $\mathcal{X}$ and a probability measure $\nu$ on $G \times G$. Notable special cases are the group-orbit average $\overline{P}$, left-average $P_{la}$, right-average $P_{ra}$ and the independent-double-average $(P_{la})_{ra}$. For $\pi$-stationary $P$ in which $\pi$ is invariant with respect to $G$, we show that in general $P_{da}$ enjoys favorable convergence properties than $P$ based on metrics such as spectral gap or asymptotic variance, and within the family of $P_{da}$ the most preferable kernel is in general $(P_{la})_{ra}$. We demonstrate that $P_{la}, P_{ra}, (P_{la})_{ra}$ are comparable in terms of mixing times, which supports the use of $P_{la}, P_{ra}$ in practice as computationally cheaper alternatives over $(P_{la})_{ra}$. These averaged kernels also admit natural geometric interpretations: they emerge as unique projections of $P$ onto specific $G$-invariant structures under the Kullback–Leibler divergence or the Hilbert–Schmidt norm and satisfy Pythagorean identities. On the other hand, in the general case if $\pi$ is not invariant with respect to $G$, we propose and study a technique that we call state-dependent averaging of Markov kernels which generalizes the earlier results to this setting. As examples and applications, this averaging perspective not only allows us to recast state-of-the-art Markov chain samplers such as Hamiltonian Monte Carlo or piecewise-deterministic Markov processes as specific cases of $P_{da}$, but also enables improvements to existing samplers such as Metropolis-Hastings, achieving rapid mixing in some toy models or when $\pi$ is the discrete uniform distribution. \newline	
\textbf{Keywords}: Markov chains, spectral gap, group-orbit average, permutations, Kullback-Leibler divergence, information projection, Markov chain Monte Carlo, Metropolis-Hastings, Hamiltonian Monte Carlo\newline
\textbf{AMS 2020 subject classification}: 05E18, 60J10, 60J20, 60J22, 65C40, 94A15, 94A17
\end{abstract}

\tableofcontents

\section{Introduction}

This paper centers on the theme of leveraging symmetry, group structure of the target distribution and averaging of Markov kernels to improve Markov chain mixing. When the state space admits a group action and the target distribution $\pi$ is compatible with that action, one can often reorganize transitions so that the chain explores states modulo the symmetry more efficiently. This paper develops a general version of this principle. Given a Markov kernel $P$ on a general Polish state space on which a locally compact group $G$ acts measurably, we introduce \emph{group‐induced averages} of $P$ defined to be
\[
P_{da}(G,\nu):=\mathbb{E}_{(g,h)\sim\nu}(U_gPU_h),
\]
where $U_g[f](x) := f(gx)$ for $f \in L^2(\pi)$ is the permutation operator associated with the action of $g\in G$, and $\nu$ is a probability measure on $G\times G$. Notable special cases include
\begin{align*}
	P_{la} &:= \mathbb{E}_{g \sim \mu}(U_gP), \\
	P_{ra} &:= \mathbb{E}_{g \sim \mu}(PU_g), \\
	(P_{la})_{ra} &:= \mathbb{E}_{(g,h) \sim \mu \otimes \mu}(U_g P U_h),
\end{align*}
that we call respectively the left-average, the right-average and the independent-double-average induced by $G$ and $\mu$ with $\mu$ being the Haar measure. These kernels can readily be shown to be $\pi$-stationary if $P$ is itself $\pi$-stationary. Intuitively, $P_{da}$ mixes the local dynamics of $P$ with global ``orbit moves", thereby facilitating jumps between different parts of the state space that can be otherwise hard to reach using the original dynamics.

We stress that this paper offers a unifying perspective: not only are many kernels of modern samplers can be understood as suitable averages, but it also sheds light on new methods to improve the mixing of existing samplers via leveraging group structure, geometric invariance and averaging.

\paragraph{Contributions and organizations.}
Our main results quantify—in spectral, geometric, and information–theoretic terms—how such averaging improves convergence.

\begin{itemize}[leftmargin=2em]
	\item \textbf{Properties of $P_{da}$.} In Section \ref{sec:prelim}, we begin our paper by properly defining the double-average $P_{da}$ and its special cases. In particular, we derive properties of $P_{da}$ and demonstrate an inheritance of properties from that of $P$.
	
	\item \textbf{Spectral and asymptotic variance improvement.} We prove that group‐induced averaging $P_{da}$ does not decrease the spectral gap and, under a natural misalignment condition between the $G$–invariant functions and the eigenspace corresponding to the gap, averaging strictly increases it (Section~\ref{sec:improvement}). In addition, we compare the asymptotic variance and demonstrate that $P_{da}$ does not increase the asymptotic variance and gives conditions under which averaging strictly decreases it. Results on these metrics also suggest that $(P_{la})_{ra}$ compares favorably with other kernels within the family of $P_{da}$.
	
	\item \textbf{Mixing time comparison.} Working with worst-case mixing times based on $L^p$ distances ($1\le p\le\infty$), we compare the independent-double-average $(P_{la})_{ra}$ against the computationally cheaper one–sided averages $P_{la}$ and $P_{ra}$ (Section~\ref{sec:comparison}), and demonstrate that these times are of comparable order. This gives practical insights on the simulation of these kernels.
	
	\item \textbf{Information projections and Pythagorean identities.} We show that $P_{da}$ are \emph{information projections} onto the corresponding $G$–invariant sets of kernels under $\pi$–weighted Kullback–Leibler (KL) divergence or the Hilbert-Schmidt (HS) norm. We prove Pythagorean identities and identifies the isotropic average $\overline P$ as the closest $G$–invariant kernel to $P$ both in KL and in HS distance (Section~\ref{sec:geometry}). This offers geometric justification that these averaged kernels arise naturally.
	
	\item \textbf{Beyond exact symmetry: artificial group planting.} Even when $\pi$ lacks a natural group invariance, we propose ``artificial group planting’’ strategies in Section \ref{sec:groupplanting} that adjoin a tractable symmetry while preserving the target $\pi$ (via state-dependent averaging or importance–sampling corrections). This extends the scope of our averaging constructions to general settings in which $\pi$ may not possess an inherent symmetric structure.
	
	\item \textbf{Examples and reformulations.} In Section \ref{sec:examples}, we recast several widely used samplers—including Swendsen–Wang, Hamiltonian Monte Carlo and piecewise-deterministic Markov processes—as special cases of $P_{da}$. We also present case studies where averaging yields provable acceleration of classical samplers on bimodal toy targets or when $\pi$ is the discrete uniform distribution, illustrating how to pick $G$ in practice.
\end{itemize}

\subsection{Related works}
Beating ``diffusivity" is the main theme in designing accelerated Markov chains. Classical random-walk type samplers tend to explore large state space of rugged target distribution inefficiently. Adding non-local jumps is a standard approach to deal with this issue. For multi-modal target distributions, jumps can enable the chain to traverse between different modes, such as parallel tempering, simulated annealing and importance sampling \citep{bertsimas1993simulated, neal2001annealed, earl2005parallel}, where different temperatures or more tractable distributions are used as a bridge to build jumps for exploration to escape local traps. Some possibly non-Markovian or particle-based algorithms also lie in this direction, such as the equi-energy sampler, Wang-Landau algorithm and sequential Monte Carlo \citep{kou2006equi, wang2001efficient, del2006sequential}. Another line works on extended state space, such as lifted MCMC, Hamiltonian Monte Carlo, underdamped Langevin diffusion and piecewise-deterministic Markov process (PDMP) \citep{diaconis2000analysis, neal2011mcmc, cheng2018underdamped, davis1984piecewise}, where a more ``deterministic" flow/direction is introduced to counter diffusive wandering, and jumps play a key role in switching between different flows/directions and retaining stationarity. This resonates with the ``hit and run" argument in \citep{andersen2007hit} which unifies many samplers under the same framework, with jumps corresponding to the ``hit" part. 

Specifically for finite Markov chains, various ways of adding jumps are more explicitly studied. Slightly modifying the underlying edges for random walks on graphs can significantly change mixing time, for example see \citep{hermon2018sensitivity, ding2013sensitivity, hermon2018sensitivitycutoff}. Furthermore, if the target distribution is uniform, composing a fixed permutation on the whole state space after each step in most cases can also substantially improve mixing, such as \citep{chung1987random, bordenave2019cutoff, chatterjee2021correction}. A relatively independent topic analogous to lifted MCMC to speed up mixing is non-backtracking Markov chains with less chance to revisit the path, see \citep{alon2007non, ben2017cutoff, diaconis2013spectral}. 

Selecting effective non-local jumps is the key challenge. A promising principle is to exploit symmetries of the target distribution, and use group actions as the natural way to characterize the induced jump maps. Symmetry of distribution is a universal phenomenon especially in problems originating from nature, for example multi-modal distributions with modes arranged in symmetric patterns. Apart from many of the algorithms introduced above, some other algorithms also utilize the symmetries to improve classical samplers. For random walks on graphs targeting the uniform distribution, \citep{boyd2009fastest} uses the automorphism group of the underlying graph to obtain a fastest mixing Markov chain, and the orbital Markov chains in \citep{niepert2012markov, niepert2013symmetry} also use automorphism groups to improve mixing in probabilistic graphical models. In \citep{kou2006equi, andrieu2021peskun, choi2025improving}, a density-preserving jump is introduced, where the isometric involution $\psi$ with $\psi^2=e$ serving as the jump in \citep{andrieu2021peskun, choi2025improving} forms a two-element flipping group $\mathbb Z_2=\{e, \psi\}$ encoding the mirror symmetry of target distribution. Such symmetry appears naturally in many lifted MCMC and PDMP constructions, as well as in some mean-field models without external fields. An intriguing question arises from these two works: the involutive constraint $\psi^2=e$ is restrictive, what if $\psi$ has higher order as $\psi^k=e$? An intuitive answer is to use cyclic group $\langle \psi\rangle$ to organize the jumps. Similar ways of generalizations seem to be an interesting direction, yet systematic study of symmetries under other groups and their implications for designing improved samplers remains underexplored. 

For target distributions lacking exact symmetry, the ``approximate symmetry" phenomenon is pointed out in \citep{ying2025multimodal, ying2023double} that appears in many statistical physics problems such as models under low temperatures, and in which similar group-based jumps may be applied. In \citep{ying2023double}, a Metropolis move is injected between isometric involutions for Ising model with boundary conditions. However, a rigorous theoretical characterization of such phenomenon, as well as a unified approach to deal with general distributions from the group-symmetric perspective are largely open. 

Symmetry under groups and projections in geometry are closely related. Particularly in terms of Markov chains, many samplers can be viewed as projections of a baseline kernel onto a symmetry-defined subset of Markov kernels. A classical example is the Metropolis-Hastings (MH) algorithm, which arises as a projection of the proposal chain onto the set of reversible kernels under suitable $L^1$-type norm on transition matrices \citep{billera2001geometric}. A continuous-time analogue of this geometric viewpoint is developed \citep{diaconis2009characterizations}. Related ``information-projection" constructions for Markov chains appear in \citep{choi2023systematic, wolfer2021information}. More recently, under the $\pi$-weighted KL divergence between Markov chains studied in \citep{wolfer2021information, wang2023information}, the proposed kernel in \citep{choi2025improving} can be seen as the projection onto the set of transition matrices invariant under the flipping group generated by isometric involution as mentioned earlier. Under the same spirit, in this article we show that the group-induced averaged kernels admit a geometric characterization as the projection onto various group-invariant subsets of kernels. Combining group, operator and geometric perspectives together, this construction provides new lens for designing improved Markov chains.  

Finally, there is also a strand of works applying group structures in specific algorithms for reasons other than encoding target symmetries. In classical Monte Carlo, \citep{wendel1957groups} incorporates group orbits into conditional Monte Carlo to reduce estimator variance. In the context of MCMC, \citep{liu2000generalised, khare2011spectral} use group actions as a coordinate-free alternative to classical blocking in Gibbs sampler and extra parametrization in data augmentation algorithm, which claim to have similar effects of ``dimension reduction". \citep{kamatani2023non} uses group elements as the directions in guided Metropolis-Hastings. A natural question remains unsolved in these works: are groups essential here, or could more general transform families do as well or better? Our article offers a symmetry-based rationale --- the most fundamental role of group is to organize invariance and symmetry --- and justify these constructions in a general way through that lens, clarifying when the group structure is intrinsic versus merely convenient. Specifically, we take advantage of the group symmetry in the target distribution and group-invariant structure in the space of Markov kernels to guide the design of our improved samplers.


\subsection{Notations}

We shall adapt the following notations throughout the paper. We write $\llbracket a,b \rrbracket := \{a, a+1, \ldots, b-1, b\}$ with $a,b \in \mathbb{Z}$ and $a \leq b$. We also denote by $\llbracket n \rrbracket := \llbracket 1,n \rrbracket$ for $n \in \mathbb{N}$. For $h \in \mathbb{R}$, we write $h_+ := \max\{h,0\}$. We write that \[
f(n) \in \Theta(h(n)) \iff \exists\, c_1,c_2>0,\; n_0 \in \mathbb{N} :
c_1 h(n) \leq f(n) \leq c_2 h(n),\; \forall n \geq n_0.
\]

\section{Preliminaries}\label{sec:prelim}

Let $X = (X_n)_{n \in \mathbb{N}_0}$ be a time-homogeneous discrete-time Markov chain on a measurable Polish state space $(\mathcal{X},\mathcal{F})$, and we denote by $P$ to be the Markov kernel which describes the one-step transition. Recall that for $P : \mathcal{X} \times \mathcal{F} \to [0,1]$ to be a Markov kernel, for each fixed $A \in \mathcal{F}$, the mapping $x \mapsto P(x,A)$ is $\mathcal{F}$-measurable and for each fixed $x \in \mathcal{X}$, the function $A \mapsto P(x,A)$ is a probability measure on $\mathcal{X}$. Given a function $f: \mathcal{X} \to \mathbb{R}$ and a signed measure $\mu$ on $(\mathcal{X},\mathcal{F})$, $P$ acts on $f$ from the left and $\mu$ from the right by
$$ P[f](x) := \int_{\mathcal{X}} f(y) P(x,dy), \quad \mu P(A) := \int_{\mathcal{X}} P(x,A) \mu(dx), \quad x \in \mathcal{X}, A \in \mathcal{F},$$
whenever the above integrals exist. The set of all Markov kernels on $\mathcal{X}$ is written as $\mathcal{L} := \mathcal{L}(\mathcal{X})$.

We denote by $\mathcal{P} := \mathcal{P}(\mathcal{X})$ to be the set of probability measures with support on $\mathcal{X}$. We say that $\pi \in \mathcal{P}$ is a stationary distribution of $X$ if
$$\int_{\mathcal{X}} P(x,A) \, \pi(dx) = \pi(A), \quad A \in \mathcal{F}.$$
We say that $X$ is reversible if there is a probability measure $\pi \in \mathcal{P}$ such that the \textit{detailed balance} relation is satisfied:
$$\pi(dx)P(x,dy) = \pi(dy) P(y,dx).$$
Let $L^2(\pi)$ be the Hilbert space of real-valued measurable functions on $\mathcal{X}$ that are squared-integrable with respect to $\pi$, endowed with the inner product $\langle f,h \rangle_{\pi} := \int f h \, d\pi$ and the norm $\norm{f}_{\pi} := \langle f,f \rangle_{\pi}^{1/2}$. $P$ can then be viewed as a linear operator on $L^2(\pi)$, in which we still denote the operator by $P$. The operator norm of $P$ on $L^2(\pi)$ is
$$||P||_{L^2 \to L^2} = \sup_{\substack{f \in L^2(\pi) \\ ||f||_{\pi}=1}} ||P[f]||_{\pi}.$$
Similarly, we define $L^2_0(\pi) := \{f \in L^2(\pi);~\langle f, \mathbf{1}\rangle_\pi=0\}$ as the Hilbert space orthogonal to $\mathbf{1}$, and the operator norm is 
\begin{equation*}
    \|P\|_{2\to 2}:=\|P\|_{L_0^2\to L_0^2}=\sup_{\substack{f \in L_0^2(\pi) \\ \|f\|_{\pi}=1}} \|P[f]\|_{\pi}.
\end{equation*}
Let $P^*$ be the adjoint or time-reversal of $P$ on $L^2(\pi)$, and it can be checked that
$$\pi(dx)P^*(x,dy) = \pi(dy) P(y,dx).$$
In this way, we write $\mathcal{L}(\pi) := \{P \in \mathcal{L};~ P^* = P\}$, the set of all $L^2(\pi)$-self-adjoint Markov kernels, and $\mathcal{S}(\pi) := \{P \in \mathcal{L};~ \pi = \pi P\}$, the set of all $\pi$-stationary Markov kernels. For $\pi \in \mathcal{P}$, we also write $\Pi : L^2(\pi) \to L^2(\pi)$ to be the rank-1 projection operator induced by $\pi$, defined to be $\Pi[f] := \pi(f) = \langle 1,f \rangle_\pi$.

Let $G$ be a group that acts on the state space $\mathcal{X}$. For $g \in G$, we define the \textbf{permutation operator} $U_g : L^2(\pi) \to L^2(\pi)$ induced by $g$ to be, 
\begin{align*}
	U_g [f](x) := f(gx).
\end{align*}
A function $f \in L^2(\pi)$ is said to be \textbf{$G$-invariant} if $f = U_g[f]$ for all $g \in G$. $\pi \in \mathcal{P}$ is said to be \textbf{$G$-invariant} if, for all $g \in G$ and $A \in \mathcal{F}$,
\begin{align*}
	\pi(A) = \pi(gA)
\end{align*}
holds, where $gA := \{gx;~x \in A\}$. We denote by $\mathcal{I}(G) := \{\pi \in \mathcal{P};~ G\textrm{-invariant} \, \pi\}$, the set of all $G$-invariant probability measures. It can readily be seen that if $\pi \in \mathcal{I}(G)$, then $U_g$ is an unitary operator on $L^2(\pi)$ with adjoint $U_g^* = U_{g^{-1}} = U_g^{-1}$.

Throughout this article, we make the following assumption:
\begin{assumption}\label{assum:pi and G}
    For $G$ and $\pi$, we assume 
    \begin{itemize}
        \item $\pi$ admits a density denoted by $\pi(x)$ w.r.t. the reference measure $\mathfrak{m}$ on $\mathcal{X}$.

        \item $\dfrac{d\mathfrak{m}\circ g^{-1}}{d\mathfrak{m}}$ exists and equal to $1$ for any $g\in G$ (if $\mathcal{X}=\mathbb R^d$, this is equivalent to $|\mathrm{det}(Dg)|=1$ when $\mathfrak{m}$ is taken to be the Lebesgue measure).
    \end{itemize}
\end{assumption}

Next, $P \in \mathcal{L}$ is said to be \textbf{$(U_g, U_g^{-1})$-invariant} if 
\begin{align*}
	P = U_g P U_g^{-1}.
\end{align*}
$P$ is said to be \textbf{$(G,G^{-1})$-invariant} if $P$ is $(U_g, U_g^{-1})$-invariant for all $g \in G$, and we write $\mathcal{L}(G,G^{-1}) := \{P \in \mathcal{L};~(G,G^{-1})\textrm{-invariant}\, P\}$. Analogously, $P \in \mathcal{L}$ is said to be \textbf{$(U_g, U_g)$-invariant} if 
\begin{align*}
	P = U_g P U_g.
\end{align*}
$P$ is said to be \textbf{$(G,G)$-invariant} if $P$ is $(U_g, U_g)$-invariant for all $g \in G$, and we write $\mathcal{L}(G,G) := \{P \in \mathcal{L};~(G,G)\textrm{-invariant}\, P\}$.

In the spirit of the previous paragraph, we define the notion of left-invariant and right-invariant $P$. For a fixed $g \in G$, $P \in \mathcal{L}$ is said to be \textbf{$U_g$-left-invariant} if 
\begin{align*}
	P = U_g P.
\end{align*}
$P$ is said to be \textbf{$G$-left-invariant} if $P$ is $U_g$-left-invariant for all $g \in G$, and we write $\mathcal{LI}(G) := \{P \in \mathcal{L};~G\textrm{-left-invariant}\, P\}$. Analogously, for a fixed $g \in G$, $P \in \mathcal{L}$ is said to be \textbf{$U_g$-right-invariant} if 
\begin{align*}
	P = P U_g.
\end{align*}
$P$ is said to be \textbf{$G$-right-invariant} if $P$ is $U_g$-right-invariant for all $g \in G$, and we write $\mathcal{RI}(G) := \{P \in \mathcal{L};~G\textrm{-right-invariant}\, P\}$.

\subsection{Group-induced averages $P_{da}$ and its special cases $\overline{P}$, $\widetilde{P}$, $P_{la}$, $P_{ra}$, $(P_{la})_{ra}$}

Several natural notions of averaging over the group $G$ arise, and from now on we assume $G$ is a locally compact topological group equipped with a Haar measure $\mu$. First, we define
\begin{align*}
	\overline{P} &= \overline{P}(G) := \int_G U_g P U_g^{-1} \, \mu(dg) = \mathbb{E}_{g \sim \mu}(U_g P U_g^{-1}), \\
	\widetilde{P} &= \widetilde{P}(G) := \int_G U_g P U_g \, \mu(dg) = \mathbb{E}_{g \sim \mu}(U_g P U_g). 
\end{align*}
Note that $\overline{P}$ is also known as the \textbf{group-orbit average} of $P$ induced by $G$, see for example \cite[Section $2.2$]{boyd2009fastest}. Analogously, we define the \textbf{left-average} (resp.~\textbf{right-average}) of $P$ with respect to $G$ to be
\begin{align*}
	P_{la} &= P_{la}(G) := \int_G U_g P \, \mu(dg) = \mathbb{E}_{g \sim \mu}(U_g P), \\
	P_{ra} &= P_{ra}(G) := \int_G P U_g \, \mu(dg) = \mathbb{E}_{g \sim \mu}(P U_g).
\end{align*}
More generally, for given probability measure $\nu$ on $G \times G$, we define the \textbf{general-double-average} of $P$ with respect to $G$ and $\nu$ to be
\begin{align*}
	P_{da} = P_{da}(G,\nu) := \mathbb{E}_{(g,h) \sim \nu}(U_g P U_h).
\end{align*}
We also let
\begin{align*}
	\mathcal{D}(G,\nu) := \{P \in \mathcal{L};~ P_{da}(G,\nu) = P\}
\end{align*}
to be the set of Markov kernels that are invariant under the general-double-average. In particular, the \textbf{independent-double-average} of $P$ is defined to be the general-double-average of $P$ with respect to $G$ and the product measure $\mu \otimes \mu$, that is,
\begin{align*}
	P_{da}(G, \mu \otimes \mu) = \mathbb{E}_{(g,h) \sim \mu \otimes \mu}(U_g P U_h) = (P_{la})_{ra}.
\end{align*}
From the above equation we see that
\begin{align}\label{eq:Plararala}
	(P_{la})_{ra} = (P_{ra})_{la}.
\end{align}
We also note that
\begin{align}
	((P_{da}(G,\nu))_{la})_{ra} &= \mathbb{E}_{(u,v) \sim \mu \otimes \mu} \mathbb{E}_{(g,h) \sim \nu} (U_u U_g P U_h U_v) \nonumber \\
	&=  \mathbb{E}_{(g,h) \sim \nu} \mathbb{E}_{(u,v) \sim \mu \otimes \mu}  (U_u U_g P U_h U_v) \nonumber \\
	&= \mathbb{E}_{(g,h) \sim \nu} (P_{la})_{ra} \nonumber \\
	&= (P_{la})_{ra}. \label{eq:Pdalara}
\end{align}
In fact, it can readily be checked that the averages introduced thus far are special cases of the general-double-average, and hence $P_{da}$ can be understood as a unified notion:
\begin{itemize}
	\item $h = g^{-1}$, $g \sim \mu$: $P_{da} = \overline{P}$
	\item $h = g$, $g \sim \mu$: $P_{da} = \widetilde{P}$
	\item $h = e$, $g \sim \mu$: $P_{da} = P_{la}$
	\item $h \sim \mu$, $g = e$: $P_{da} = P_{ra}$
\end{itemize}

In the following proposition, we prove that $\overline{P}$ (resp.~$\widetilde{P}, P_{la}, P_{ra}, (P_{la})_{ra}$) is a Markov kernel that belongs to $\mathcal{L}(G,G^{-1})$ (resp.~$\mathcal{L}(G,G), \mathcal{LI}(G), \mathcal{RI}(G), \mathcal{LI}(G) \cap \mathcal{RI}(G)$) under suitable assumptions.

\begin{proposition}\label{prop:Abelian}
	Let $G$ be a locally compact topological group with Haar measure $\mu$ that acts on $\mathcal{X}$. We then have
	\begin{align*}
		\overline{P} &\in \mathcal{L}(G,G^{-1}), \quad P_{la} \in \mathcal{LI}(G), \quad P_{ra} \in \mathcal{RI}(G), \quad (P_{la})_{ra} \in \mathcal{LI}(G) \cap \mathcal{RI}(G).
	\end{align*}
	If $G$ is further assumed to be an Abelian group, then 
	\begin{align*}
		\widetilde{P} \in \mathcal{L}(G,G).
	\end{align*}
\end{proposition}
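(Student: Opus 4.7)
The plan is a direct verification: check that each of the five averaged kernels is a Markov kernel, then establish its stated invariance by translating the integrand under the defining invariance of Haar measure $\mu$. The crucial algebraic hook is the homomorphism property $U_{gh} = U_g U_h$ of the permutation-operator representation, which converts operator composition into group multiplication so that a Haar translation absorbs any conjugating element. First I would note that every integrand $U_g P U_h$ is itself a Markov kernel: it sends $\mathbf{1}$ to $\mathbf{1}$ because $U_\cdot \mathbf{1} = \mathbf{1}$ and $P\mathbf{1} = \mathbf{1}$, and it is positive as a composition of positive operators, so averaging against the probability measure $\mu$ (or $\mu \otimes \mu$) preserves the Markov property.

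For the invariance claims, I would fix an arbitrary $h \in G$ and compute
\[
U_h \overline{P} U_h^{-1} = \int_G U_{hg} P U_{(hg)^{-1}} \, \mu(dg) = \int_G U_g P U_{g^{-1}} \, \mu(dg) = \overline{P},
\]
where the middle equality is left-translation invariance of $\mu$ under $g \mapsto h^{-1}g$. The same template, specialized so one factor is the identity, yields $U_h P_{la} = P_{la}$ by absorbing $U_h$ into $U_g$ on the left and $P_{ra} U_h = P_{ra}$ by absorbing $U_h$ into $U_g$ on the right. The membership $(P_{la})_{ra} \in \mathcal{LI}(G) \cap \mathcal{RI}(G)$ then follows by applying the $P_{la}$ computation on the left and the $P_{ra}$ computation on the right independently, or equivalently by iterating and using the commutation identity \eqref{eq:Plararala}.

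The Abelian case for $\widetilde{P}$ requires one extra ingredient. Repeating the conjugation trick produces
\[
U_h \widetilde{P} U_h = \int_G U_{hg} P U_{gh} \, \mu(dg),
\]
and only after invoking commutativity $gh = hg$ do the two shifts agree, so that a single Haar substitution $g \mapsto h^{-1}g$ collapses both factors simultaneously and recovers $\widetilde{P}$. This pinpoints why the Abelian hypothesis is indispensable here: without it, $U_{hg} P U_{gh}$ carries mismatched left and right shifts that no single change of variable can equalize.

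The main obstacle is purely bookkeeping. For $P_{ra}$ (and hence for $(P_{la})_{ra}$) the change of variables $g \mapsto gh^{-1}$ relies on right-translation invariance of $\mu$, i.e.\ on unimodularity; this is automatic for compact and for Abelian groups, which cover the setting of the paper's definitions, but for a general non-unimodular locally compact $G$ one would have to specify whether the left or the right Haar measure is being used in each definition. Under Assumption \ref{assum:pi and G} and the working convention that $\mu$ is a (bi-invariant) Haar probability measure, every interchange of integrals is justified by Fubini and every translation absorption is justified by Haar invariance, so the routine computations above deliver the proposition without further complications.
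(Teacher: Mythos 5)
Your proposal is correct and follows essentially the same route as the paper: verify the Markov property of the integrands, then absorb the conjugating element into the Haar integral by a change of variables, with commutativity needed only to align the two shifts in the $\widetilde{P}$ case. Your extra remark about unimodularity (needed for the right-translation step in $P_{ra}$) is a point the paper's proof passes over silently, and is a worthwhile clarification; just note that with the convention $U_g[f](x)=f(gx)$ one actually has $U_gU_h=U_{hg}$ (an anti-homomorphism), which changes whether left- or right-invariance of $\mu$ is invoked at each step but does not affect the conclusion.
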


\begin{proof}
	First, it is trivial to see that $\overline{P}, \widetilde{P}, P_{la}, P_{ra}$ are Markov kernels on $\mathcal{X}$: they map non-negative $f$ to $\overline{P}[f], \widetilde{P}[f], P_{la}[f], P_{ra}[f] \geq 0$. Also, it can readily be checked that $\overline{P}[\mathbf{1}], \widetilde{P}[\mathbf{1}], P_{la}[\mathbf{1}], P_{ra}[\mathbf{1}] = 1$, where $\mathbf{1}$ is the constant function of value $1$.
	
	Let $h \in G$, and consider
	\begin{align*}
		U_h \overline{P} U_h^{-1} = \int U_h U_g P U_g^{-1} U_h^{-1} \, \mu(dg) = \int U_{hg} P U_{hg}^{-1} \, \mu(dhg) = \overline{P},
	\end{align*}
	where the second equality uses $\mu$ is $G$-invariant. Similarly, we see that
	\begin{align*}
		U_h P_{la} &= \int U_h U_g P \, \mu(dg) = \int U_{hg} P \, \mu(dhg) = P_{la}, \\
		P_{ra} U_h &= \int P U_g U_h \, \mu(dg) = \int U_{gh} P \, \mu(dgh) = P_{ra}, \\
		U_h (P_{la})_{ra} &= U_h (P_{ra})_{la} = (P_{ra})_{la} = (P_{la})_{ra}, \\
		(P_{la})_{ra} U_h &= (P_{la})_{ra},
	\end{align*}
	where we use \eqref{eq:Plararala} in the third line above. Finally, we compute that
	\begin{align*}
		U_h \widetilde{P} U_h = \int U_h U_g P U_g U_h \, \mu(dg) = \int U_{hg} P U_{gh} \, \mu(dg) = \int U_{hg} P U_{hg} \, \mu(dhg) = \widetilde{P},
	\end{align*}
	where the third equality utilizes the Abelian property of $G$ and $\mu$ is $G$-invariant.
\end{proof}

$P \in \mathcal{L}$ is said to be a trace-class operator if 
\begin{align*}
	\sum_{e \in \mathcal{B}} \langle |P|[e],e \rangle_\pi < \infty,
\end{align*}
where $|P| := \sqrt{P^*P}$ and $\mathcal{B}$ is a set of orthonormal basis of $L^2(\pi)$. If $P$ is a trace-class operator, we define its trace to be
\begin{align*}
	\mathrm{Tr}(P) := \sum_{e \in \mathcal{B}} \langle P[e],e \rangle_\pi,
\end{align*} 
where the right hand side is independent of the chosen set of basis $\mathcal{B}$. $P \in \mathcal{L}$ is said to be a Hilbert-Schmidt operator if 
\begin{align*}
	\norm{P}_{\mathrm{HS}}^2 := \sum_{e \in \mathcal{B}} \norm{P[e]}_\pi^2 = \sum_{f,e \in \mathcal{B}} |\langle f,P[e]\rangle_\pi |^2 < \infty,
\end{align*}
where $\mathcal{B}$ is a set of orthonormal basis of $L^2(\pi)$ and the right hand side is independent of the chosen $\mathcal{B}$. When $P \in L^2(\pi)$, functional calculus gives that, for $f,g \in L^2(\pi)$,
\begin{align*}
	\langle P[f],h \rangle_\pi = \int_{[-1,+1]} \lambda \, d \langle \mathcal{E}_\lambda [f],h \rangle_\pi,
\end{align*}
where $(\mathcal{E}_\lambda)$ is the spectral measure associated with $P$.

In the following proposition, we summarize properties in which $P_{da}$ (particularly the special cases $\overline{P}, \widetilde{P}, P_{la}, P_{ra}, (P_{la})_{ra}$) inherits from that of $P$.

\begin{proposition}[Inheritance of properties]\label{prop:inheritance of properties}
	Let $G$ be a locally compact topological group with Haar measure $\mu$ that acts on $\mathcal{X}$. Assume further that $\pi \in \mathcal{I}(G)$ is $G$-invariant. We have
	\begin{enumerate}
		\item\label{it:inherit1}($\pi$-stationarity) If $P \in \mathcal{S}(\pi)$, then $P_{da}(G,\nu)$ (and hence $\overline{P}, \widetilde{P}, P_{la}, P_{ra}, (P_{la})_{ra}$) $\in \mathcal{S}(\pi)$ and $(P_{la})^* = P^*_{ra}$.
		
		\item\label{it:inherit2}($\pi$-reversibility) If $P \in \mathcal{L}(\pi)$ and $\nu$ is symmetric in the sense that $(g,h) \overset{D}{=} (h^{-1},g^{-1}) \sim \nu$ where $\overset{D}{=}$ denotes equality in distribution, then $P_{da}(G,\nu) \in \mathcal{L}(\pi)$. In particular, this yields $\overline{P}, \widetilde{P}, (P_{la})_{ra} \in \mathcal{L}(\pi)$.
		
		\item\label{it:inherit5}(compactness) Assume that $G$ is a finite group. If $P$ is a compact operator, then $P_{da}(G,\nu)$ (and hence $\overline{P}, \widetilde{P}, P_{la}, P_{ra}, (P_{la})_{ra}$) are compact operators.
		
		\item\label{it:inherit3}(trace-class operator) Suppose that $P$ is a trace-class operator and $G$ is a finite group. Then
		\begin{align*}
			\mathrm{Tr}(P) = \mathrm{Tr}(\overline{P}),
		\end{align*}
		and hence $\mathrm{Tr}(\overline{P}) < \infty$. Assume further that $P \in \mathcal{L}(\pi)$ (and hence $\overline{P}$) is a non-negative $L^2(\pi)$-self-adjoint operator, then $P$ is trace-class implies that $\overline{P}$ is trace-class.
		
		\item\label{it:inherit4}(Hilbert-Schmidt operator) If $P$ is a Hilbert-Schmidt operator, then
		\begin{align*}
			\norm{P}_{\mathrm{HS}} \geq \norm{\overline{P}}_{\mathrm{HS}}, \quad \norm{P}_{\mathrm{HS}} \geq \norm{P_{la}}_{\mathrm{HS}} \geq \norm{(P_{la})_{ra}}_{\mathrm{HS}}, \quad \norm{P}_{\mathrm{HS}} \geq \norm{P_{ra}}_{\mathrm{HS}}.
		\end{align*}
		Consequently, $\overline{P}, P_{la}, P_{ra}, (P_{la})_{ra}$ are Hilbert-Schmidt operators. If $G$ is assumed to be a finite group, then $P_{da}(G,\nu)$ and hence $\widetilde{P}$ are Hilbert-Schmidt operators.		
	\end{enumerate}
\end{proposition}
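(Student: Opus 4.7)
The overall strategy is to exploit that, under Assumption \ref{assum:pi and G} and $\pi \in \mathcal{I}(G)$, each $U_g$ is a unitary operator on $L^2(\pi)$; this reduces every item to a direct operator-algebraic manipulation performed under the integral (or finite sum) defining $P_{da}$. The basic move is to interchange $\mathbb{E}_{(g,h) \sim \nu}$ with whichever linear operation is in question (pairing against $\pi$, adjoint, trace, HS norm), use unitarity of $U_g$ to collapse one or both factors, and finally reindex via an invariance of $\mu$ or $\nu$.

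For item \ref{it:inherit1}, I would test $\pi P_{da} = \pi$ by pairing against arbitrary $f \in L^2(\pi)$: unitarity of $U_h$ gives $\pi(U_h f) = \pi(f)$, and $\pi$-stationarity of $P$ then yields $\pi(U_g P U_h f) = \pi(P U_h f) = \pi(f)$ pointwise in $(g,h)$, after which Fubini gives $\pi(P_{da} f) = \pi(f)$. The identity $(P_{la})^* = P^*_{ra}$ follows from $(\mathbb{E}_g U_g P)^* = \mathbb{E}_g P^* U_g^{-1}$ combined with an inversion change of variable $g \mapsto g^{-1}$ on $\mu$, which is available on compact/unimodular $G$. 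For item \ref{it:inherit2}, taking adjoints under the integral gives $P_{da}^* = \mathbb{E}_{(g,h) \sim \nu}(U_{h^{-1}} P U_{g^{-1}})$, and the symmetry hypothesis on $\nu$ lets me relabel $(g,h) \leftrightarrow (h^{-1},g^{-1})$ to recover $P_{da}$; the three named cases ($\overline{P}, \widetilde{P}, (P_{la})_{ra}$) are shown to produce a symmetric $\nu$ by inspection.

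Items \ref{it:inherit5}–\ref{it:inherit4} are structurally parallel. Item \ref{it:inherit5} is immediate because a finite convex combination of $U_g P U_h$ is a finite sum of compact operators, compactness being preserved under left/right composition with bounded (here unitary) operators. For item \ref{it:inherit3}, the cyclicity $\mathrm{Tr}(U_g P U_g^{-1}) = \mathrm{Tr}(P)$ together with $\sum_g \mu(g) = 1$ immediately yields $\mathrm{Tr}(\overline{P}) = \mathrm{Tr}(P)$, and when $P \in \mathcal{L}(\pi)$ is non-negative so is each $U_g P U_g^{-1}$, giving $\overline{P} \geq 0$ with finite trace and hence trace-class. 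For item \ref{it:inherit4}, unitary invariance of the Hilbert–Schmidt norm gives $\|U_g P U_h\|_{\mathrm{HS}} = \|P\|_{\mathrm{HS}}$; the triangle inequality applied under the integral then gives $\|P_{la}\|_{\mathrm{HS}}, \|P_{ra}\|_{\mathrm{HS}}, \|\overline{P}\|_{\mathrm{HS}} \leq \|P\|_{\mathrm{HS}}$, and iterating the right-average bound on $P_{la}$ produces $\|(P_{la})_{ra}\|_{\mathrm{HS}} \leq \|P_{la}\|_{\mathrm{HS}}$. The same bound for $\widetilde{P}$ and general $P_{da}$ is finite whenever the sum/integral is finite, which is guaranteed for finite $G$.

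The only nontrivial obstacle I anticipate is the measure-theoretic bookkeeping for non-finite $G$: justifying the interchange of $\mu$-integration with operations such as trace or the HS-norm, and ensuring inversion-invariance of $\mu$ where it is invoked. I would handle the interchange by a Bochner-integral argument, dominating the integrand by the constant $\|P\|_{\mathrm{HS}}$ (or $\|P\|_1$), while noting that for items \ref{it:inherit5} and \ref{it:inherit3} the assumption that $G$ is finite removes the issue entirely, and for item \ref{it:inherit1} the inversion identity is applied only in the compact/unimodular regime already implicit in the paper's running setup.
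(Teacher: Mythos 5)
Your proposal is correct and follows essentially the same route as the paper: $\pi$-stationarity and reversibility via adjoint/Fubini computations under the defining integral, compactness via closure under finite linear combinations and bounded composition, trace-preservation via cyclicity/change of orthonormal basis, and HS bounds via unitary invariance plus an integral inequality. Your use of the Bochner-integral triangle inequality for the HS bounds is a mild stylistic variant of the paper's scalar Jensen-plus-Tonelli computation, and your explicit note that the identity $(P_{la})^*=(P^*)_{ra}$ relies on inversion-invariance of $\mu$ is a correct observation that the paper leaves implicit.
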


\begin{remark}[A two-point example where $\mathrm{Tr}(P_{la}) = \mathrm{Tr}(P_{ra}) > \mathrm{Tr}(P)$]
	We see in Proposition \ref{prop:inheritance of properties} that when $P$ is a trace-class operator the projection $\overline{P}$ is trace-preserving as $\mathrm{Tr}(P) = \mathrm{Tr}(\overline{P})$. On the other hand, this example shows that $P_{la}, P_{ra}$ may not preserve the trace of $P$. We consider a two-point state space $\mathcal{X}=\{1,2\}$ and the two-element group $G=\{e, (12)\}$. Let
	\[
	P=\begin{pmatrix}
		a & b   \\[3pt]
		b   & a
	\end{pmatrix},
	\]
	along with $a,b \in [0,1]$, $a + b = 1$ and $b > a$. Clearly, $\pi$ is the discrete uniform on $\mathcal{X}$. Then $\mathrm{Tr}(P_{la}) = \mathrm{Tr}(P_{ra}) = a + b > 2a = \mathrm{Tr}(P)$.
\end{remark}

\begin{proof}
	We first prove item \eqref{it:inherit1}. We see that, for $A \in \mathcal{F}$,
	\begin{align*}
		\pi P_{da}(A) = \int_{G \times G} \int_{\mathcal{X}} \pi(dx) P(gx,hA) \nu(dg\, dh) = \int_{G \times G} \pi(A) \nu(dg\, dh) = \pi(A),
	\end{align*}
	where the second equality makes use of $\pi \in \mathcal{I}(G)$. 
	For $f,h \in L^2(\pi)$, we note that
	\begin{align*}
		\langle P_{la}[f],h \rangle_\pi &= \mathbb{E}_{g \sim \mu}(\langle U_g P[f],h \rangle_\pi)\\
		&= \mathbb{E}_{g \sim \mu}(\langle f, P^* U_g^{-1}[h] \rangle_\pi) \\
		&= \langle f,P_{ra}[h] \rangle_\pi,
	\end{align*}
	which yields $(P_{la})^* = P^*_{ra}$.
	
	Next, we prove item \eqref{it:inherit2}. For $f,h \in L^2(\pi)$, we see that
	\begin{align*}
		\langle P_{da}(G,\nu)[f],h \rangle_\pi &= \mathbb{E}_{(g,h) \sim \nu}(\langle U_g P U_h[f],h \rangle_\pi)\\
		&= \mathbb{E}_{(g,h) \sim \nu}(\langle f,U_{h^{-1}} P U_{g^{-1}}[h] \rangle_\pi) \\
		&= \mathbb{E}_{(h^{-1},g^{-1}) \sim \nu}(\langle f,U_{h^{-1}} P U_{g^{-1}}[h] \rangle_\pi) \\
		&= \langle f,P_{da}(G,\nu)[h] \rangle_\pi,
	\end{align*}
	where the second equality utilizes $\pi \in \mathcal{I}(G)$ and $P \in \mathcal{L}(\pi)$, and the third equality follows from the symmetry of $\nu$. In particular, the special cases $\overline{P}, \widetilde{P}, (P_{la})_{ra}$ all have symmetric $\nu$.
	
	To prove item \eqref{it:inherit5}, we see that $U_g P U_h$ is a compact operator as product of bounded operators (i.e. $U_g, U_h$) with compact operator (i.e. $P$) remain to be compact operators (see e.g. \cite[Proposition $4.2$]{Conway1990}). Therefore, $P_{da}(G,\nu)$, and hence $\overline{P}, \widetilde{P}, P_{la}, P_{ra}, (P_{la})_{ra}$, are compact operators as the set of compact operators is closed under finite linear combination (see e.g. \cite[paragraph after Theorem $8.1-3$]{K89}).
		
	We move on to prove item \eqref{it:inherit3}. For $g \in G$, we see that
	\begin{align*}
		\mathrm{Tr}(U_g P U_g^{-1}) = \sum_{e \in \mathcal{B}} \langle P[U_g^{-1} e],U_g^{-1}[e] \rangle_\pi = \mathrm{Tr}(P).
	\end{align*}
	Summing up over the Haar measure $\mu$ as $G$ is finite, we see that
	\begin{align*}
		\mathrm{Tr}(\overline{P}) = \mathrm{Tr}(\mathbb{E}_{g \sim \mu}(U_g P U_g^{-1})) = \mathbb{E}_{g \sim \mu} \left(\sum_{e \in \mathcal{B}} \langle P[U_g^{-1} e],U_g^{-1}[e] \rangle_\pi\right) = \mathbb{E}_{g \sim \mu}(\mathrm{Tr}(P)) = \mathrm{Tr}(P).
	\end{align*}
	If $P \in \mathcal{L}(\pi)$ is a non-negative $L^2(\pi)$-self-adjoint operator, then by spectral theorem so does $\overline{P}$, and hence $\overline{P}$ is trace-class if and only if $\mathrm{Tr}(\overline{P}) = \mathrm{Tr}(P) < \infty$, which is true.
	
	Finally, we prove item \eqref{it:inherit4}. We consider
	\begin{align*}
		\norm{\overline{P}}_{\mathrm{HS}}^2 &= \sum_{f,e \in \mathcal{B}} \bigg|\int_G \langle U_g^{-1}[f],PU_g^{-1} [e]\rangle_\pi \mu(dg) \bigg|^2 \\
		&\leq \int_G  \sum_{f,e \in \mathcal{B}} \bigg| \langle U_g^{-1}[f],PU_g^{-1} [e]\rangle_\pi\bigg|^2 \mu(dg)  \\
		&= \int_G \norm{P}_{\mathrm{HS}}^2 \mu(dg) = \norm{P}_{\mathrm{HS}}^2,
	\end{align*}
	where the inequality follows from the Jensen's inequality. Similarly, we compute that
	\begin{align*}
		\norm{P_{la}}_{\mathrm{HS}}^2 &= \sum_{f,e \in \mathcal{B}} \bigg|\int_G \langle U_g^{-1}[f],P [e]\rangle_\pi \mu(dg) \bigg|^2 \\
		&\leq \int_G  \sum_{f,e \in \mathcal{B}} \bigg| \langle U_g^{-1}[f],P[e]\rangle_\pi\bigg|^2 \mu(dg)  \\
		&\leq \int_G \norm{P}_{\mathrm{HS}}^2 \mu(dg) = \norm{P}_{\mathrm{HS}}^2,
        \end{align*}
        and
        \begin{align*}
		\norm{P_{ra}}_{\mathrm{HS}}^2 &= \sum_{f,e \in \mathcal{B}} \bigg|\int_G \langle P^*[f], U_g [e]\rangle_\pi \mu(dg) \bigg|^2 \\
		&\leq \int_G  \sum_{f,e \in \mathcal{B}} \bigg| \langle P^*[f],U_g[e]\rangle_\pi\bigg|^2 \mu(dg)  \\
		&\leq \int_G \norm{P^*}_{\mathrm{HS}}^2 \mu(dg) = \norm{P}_{\mathrm{HS}}^2,
	\end{align*}
	where we make use of the Jensen's and Cauchy-Schwartz inequality as well as $\norm{P}_{\mathrm{HS}} = \norm{P^*}_{\mathrm{HS}}$. If $G$ is a finite group, we first note that as $U_g$ is a bounded operator and $P$ is Hilbert-Schmidt, the product $U_g P U_h$ is a Hilbert-Schmidt operator \cite[Page $267$]{Conway1990}, and so does $P_{da}(G,\nu)$ as it is a finite mixture of Hilbert-Schmidt operators \cite[Theorem $VI.22$]{RS72}.
\end{proof}

\section{Improvement of $P_{da}$ over $P$}\label{sec:improvement}
In this section, we shall demonstrate that $P_{da}$ and its special cases $\overline P, \widetilde P, P_{la}, P_{ra}, (P_{la})_{ra}$ improve upon $P$ from the perspective of mixing time related parameters under suitable assumptions.

\subsection{Comparison of spectral gap}\label{subsec:spectralgap}

The (right) spectral gap of $P$ is defined as 
\begin{equation}\label{eq:rightspectralgap}
    \lambda=\lambda(P):=\inf\left\{\langle f, -L[f]\rangle_\pi: f\in L^2_0(\pi), \|f\|_\pi=1 \right\},
\end{equation}
where $L:=P-I$ is the generator of $P$. The spectral gap $\lambda(P)$ is the gap between $1$ and its second largest eigenvalue of additive reversiblization $\frac{P+P^*}{2}$. Spectral gap is of interest as it plays a key role in bounding the mixing times of $P$, and a larger spectral gap typically implies a smaller upper bound on the mixing time, especially for reversible Markov chains, see \citep{levin2017markov}.

We follow the setting in Proposition \ref{prop:inheritance of properties}, for the given group $G$, we define 
\begin{equation}\label{eq:G-invariant subspace}
    V=V(G):=\left\{f\in L^2(\pi): U_g[f]=f, \enspace\forall g\in G\right\}
\end{equation}
as the $G$-invariant subspace of $L^2(\pi)$, and $$V':=\left\{f\in V: \langle f, \mathbf{1}\rangle_\pi=0\right\}$$ as the subspace of $V$ orthogonal to the constant function. We define 
\begin{equation*}
    W=W(P):=\left\{f\in L^2(\pi): -\frac{L+L^*}{2}[f]=\lambda f\right\}
\end{equation*}
as the eigenspace corresponding to the spectral gap, then we can write $L^2(\pi)=W^\perp\oplus W$. Assume $W$ has a sequence of orthogonal basis functions $\{u_i\}_{i\in \mathcal{J}}$, where $\langle u_i,u_j\rangle_\pi=\delta_{ij}$. For any $f\in L^2(\pi)$, we denote $f_V, f_{V'}, f_W, f_{W^\perp}$ as the projections onto the respective subspaces. 

In the following theorem, we start with $\overline P$ a simple case of $P_{da}(G,\nu)$, and show that it indeed has a larger spectral gap than $P$ under mild conditions, with a (relatively) explicit improvement proposed. We also investigate the sufficient conditions such that such improvement is strict. 

\begin{theorem}\label{thm:spectral improvement of overline P}
    Assume that $P\in \mathcal{S}(\pi)$ (and hence $(1/2)(P+P^*)$) is a compact operator. Let $G$ be a locally compact topological group with Haar measure $\mu$ that acts on $\mathcal{X}$, and assume $\pi$ is $G$-invariant. Let $\lambda_2=\lambda_2(P)$ be the third smallest eigenvalue of $\frac{L+L^*}{2}$, which satisfies $\lambda_2>\lambda$. Let $\mathbf{P}_\Omega$ be the projection operator of $L^2(\pi)$ onto any subspace $\Omega$, then
        \begin{align*}
            \lambda(\overline P)\geq \min \bigg\{&\|\mathbf{P}_V\mathbf{P}_W\mathbf{P}_V\|_{2\to 2}\cdot \lambda(P)+\left(1-\|\mathbf{P}_V\mathbf{P}_W\mathbf{P}_V\|_{2\to 2}\right)\cdot \lambda_2(P), \\
            & \|\mathbf{P}_{V^\perp}\mathbf{P}_W\mathbf{P}_{V^\perp}\|_{2\to 2}\cdot \lambda(P)+\left(1-\|\mathbf{P}_{V^\perp}\mathbf{P}_W\mathbf{P}_{V^\perp}\|_{2\to 2}\right)\cdot \lambda_2(P)\bigg\}\geq \lambda(P),
        \end{align*}
    and $\lambda(\overline P)>\lambda(P)$ if $W\cap V=W\cap V^\perp=\{0\}$.
    
    Specifically, for the cases where $|\mathcal{J}|=1$ such that $W$ has only one basis function $u$ with $\|u\|_\pi=1$, we have
    \begin{align*}
        \lambda(\overline{P})\geq \min \bigg\{&\|u_V\|_\pi^2\cdot \lambda(P)+\left(1-\|u_V\|_\pi^2\right)\cdot \lambda_2(P),\\
        & \left(1-\|u_V\|_\pi^2\right)\cdot \lambda(P)+\|u_V\|_\pi^2\cdot \lambda_2(P)\bigg\}\geq \lambda(P),
    \end{align*}
    where $u_V=\mathbb E_{g\sim \mu}\left[U_g[u]\right]$. In this case, $\lambda(\overline P)>\lambda(P)$ holds as long as $u$ is not $G$-invariant and $u_V\neq 0$.
\end{theorem}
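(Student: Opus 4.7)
The plan is to exploit the $(G,G^{-1})$-invariance of $\overline{P}$: a Haar-translation argument (as in Proposition \ref{prop:Abelian}) shows that $\overline{P}$ commutes with every $U_g$, so it preserves the $G$-invariant subspace $V$ and, by unitarity of $U_g$, also its orthogonal complement $V^\perp$. Since $\mathbf{1}\in V$, we obtain the orthogonal decomposition $L^2_0(\pi) = V' \oplus V^\perp$ into $\overline{P}$-invariant subspaces, so $\lambda(\overline{P}) = \min\{\lambda(\overline{P}|_{V'}),\,\lambda(\overline{P}|_{V^\perp})\}$, and it suffices to lower-bound the Dirichlet form $\langle f, -\overline{L} f\rangle_\pi$ on a unit $f$ in each piece.

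On $V'$, since $U_g^{-1} f = f$, one computes $\langle f, \overline{P} f\rangle_\pi = \mathbb{E}_{g\sim\mu}\langle U_g^{-1} f, Pf\rangle_\pi = \langle f, Pf\rangle_\pi$, so the Dirichlet forms of $\overline{P}$ and $P$ coincide on $V$. Decomposing unit $f = f_W + f_{W^\perp}$ via the spectral resolution of the reversibilization (compactness of $P$ ensures a discrete eigenspace decomposition) and using $\lambda_2$ as the next spectral value, I obtain $\langle f, -Lf\rangle_\pi \geq \lambda_2 - (\lambda_2 - \lambda)\|\mathbf{P}_W f\|_\pi^2$. Maximizing $\|\mathbf{P}_W f\|_\pi^2$ over unit $f\in V'$ yields $\|\mathbf{P}_W\mathbf{P}_V\|_{2\to 2}^2 = \|\mathbf{P}_V\mathbf{P}_W\mathbf{P}_V\|_{2\to 2}$ via the standard identity $\|A\|_{2\to 2}^2 = \|A^*A\|_{2\to 2}$, producing the first lower bound.

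On $V^\perp$, the key step is a change of variable inside the Dirichlet form: using $\overline{P}^{\,*} = \overline{P^*}$, one has $\overline{S} = \mathbb{E}_{g\sim\mu} U_g S U_g^{-1}$ with $S := (P+P^*)/2$, so setting $f^g := U_g^{-1} f$ gives $\langle f, -\overline{L}f\rangle_\pi = \mathbb{E}_{g\sim\mu}\langle f^g, -Lf^g\rangle_\pi$. Since $U_g$ is unitary and $V^\perp$ is $U_g$-invariant, each $f^g$ is a unit vector in $V^\perp$, and the fiberwise spectral bound gives $\langle f, -\overline{L}f\rangle_\pi \geq \lambda_2 - (\lambda_2 - \lambda)\mathbb{E}_{g\sim\mu}\|\mathbf{P}_W f^g\|_\pi^2$. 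Bounding the average uniformly by $\sup_{\tilde f \in V^\perp,\,\|\tilde f\|_\pi=1}\langle \tilde f, \mathbf{P}_W \tilde f\rangle_\pi = \|\mathbf{P}_{V^\perp}\mathbf{P}_W\mathbf{P}_{V^\perp}\|_{2\to 2}$ delivers the second lower bound.

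Strict improvement follows from the classical fact that $\|\mathbf{P}_V\mathbf{P}_W\mathbf{P}_V\|_{2\to 2} = 1$ iff $V \cap W \neq \{0\}$ (and analogously for $V^\perp$); so $W\cap V = W\cap V^\perp = \{0\}$ forces both norms strictly below $1$, rendering each bound a strict convex combination exceeding $\lambda$. For the $|\mathcal{J}|=1$ specialization with $W=\mathrm{span}(u)$ and $\|u\|_\pi = 1$, rank-oneness of $\mathbf{P}_W$ reduces the operator norms to inner-product computations: $\|\mathbf{P}_V\mathbf{P}_W\mathbf{P}_V\|_{2\to 2} = \sup_{v\in V,\|v\|_\pi=1}|\langle u, v\rangle_\pi|^2 = \|\mathbf{P}_V u\|_\pi^2 = \|u_V\|_\pi^2$, and analogously $\|\mathbf{P}_{V^\perp}\mathbf{P}_W\mathbf{P}_{V^\perp}\|_{2\to 2} = 1 - \|u_V\|_\pi^2$ via $\|u\|_\pi=1$; strict improvement then reduces to $u_V\neq 0$ and $u\notin V$, i.e., $u$ not $G$-invariant. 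The main obstacle I anticipate is the $V^\perp$ analysis: unlike on $V$, the Dirichlet forms of $\overline{P}$ and $P$ do not coincide, and one must carefully pair the Haar average with the $U_g$-invariance of $V^\perp$ to isolate $\|\mathbf{P}_{V^\perp}\mathbf{P}_W\mathbf{P}_{V^\perp}\|_{2\to 2}$ as the sharp overlap bound.
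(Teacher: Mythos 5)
Your proposal is correct and follows essentially the same route as the paper's proof: you decompose $L^2_0(\pi)=V'\oplus V^\perp$ into subspaces preserved by the additive reversibilization of $\overline{L}$, compare the Dirichlet form of $\overline{P}$ to that of $P$ on each piece (noting they coincide on $V'$ and, on $V^\perp$, invoking the Haar change of variable $f\mapsto U_g^{-1}f$ together with $U_g$-invariance of $V^\perp$), apply the two-level spectral bound $\langle -Lf,f\rangle_\pi\ge \lambda_2-(\lambda_2-\lambda)\|\mathbf{P}_W f\|_\pi^2$, and control the overlap via the norms $\|\mathbf{P}_\Omega\mathbf{P}_W\mathbf{P}_\Omega\|_{2\to 2}$ for $\Omega\in\{V,V^\perp\}$. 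Your strictness argument via the subspace-angle fact and the rank-one reduction for $|\mathcal{J}|=1$ also match the paper's; the only cosmetic difference is that you state the invariant-subspace decomposition as an equality of restricted spectral gaps whereas the paper proves only the needed inequality.
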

\begin{proof}
    We first show that the projection of $f$ onto subspace $V$ can be written as $f_V=\mathbb E_{g\sim \mu}\left(U_g[f]\right)$. Let $f=f_V+f_{V^{\perp}}$ as the orthogonal decomposition, since $f_V$ is $G$-invariant, then for any $g\in G$, $U_g[f]=f_V+U_g[f_{V^\perp}]$, taking expectation yields
    \begin{equation*}
        \mathbb E_{g\sim \mu}\left(U_g[f]\right)=f_V+\mathbb E_{g\sim \mu}\left(U_g[f_{V^\perp}]\right).
    \end{equation*}
    Observing that for any $\phi\in L^2(\pi)$, 
    \begin{equation*}
        \left\langle \mathbb E_{g\sim \mu}\left(U_g[f_{V^\perp}]\right), \phi\right\rangle_\pi=\left\langle f_{V^\perp}, \mathbb E_{g\sim \mu}\left(U_g^{-1}[\phi]\right)\right\rangle_\pi=0,
    \end{equation*}
    we get $f_V=\mathbb E_{g\sim \mu}\left(U_g[f]\right)$. 
    
    Let $\overline L=\mathbb E_{g\sim \mu}\left(U_gLU_g^{-1}\right)$, it can be readily verified that $\overline L(V)\subseteq V$, $\overline L(V^\perp)\subseteq V^\perp$ and $U_g(V^\perp)\subseteq V^\perp$ for any $g\in G$, hence for any $f\in L_0^2(\pi)$, $\|f\|_\pi=1$,
    \begin{align*}
        \left\langle -\overline L[f], f\right\rangle_\pi&=\left\langle-\overline L[f_V]-\overline{L}[f_{V^\perp}], f_V+f_{V^\perp}\right\rangle_\pi\\
        &=\left\langle -\overline{L}[f_V], f_V\right\rangle_\pi+\left\langle -\overline{L}[f_{V^\perp}], f_{V^\perp}\right\rangle_\pi\\
        &=\left\langle -L[f_V], f_V\right\rangle_\pi+\mathbb E_{g\sim\mu}\left(\langle-LU_g[f_{V^\perp}], U_g[f_{V^\perp}]\rangle_\pi\right),
    \end{align*}
    therefore, recalling that constant function is $G$-invariant, which means $V^\perp$ is orthogonal to $\mathbf{1}$, we have
    \begin{equation}\label{eq:lower bound lambda(bar P), two terms}
        \lambda(\overline P)\geq \min \left\{\inf_{f\in V',\|f\|_\pi=1}\langle -L[f],f\rangle_\pi, \inf_{f\in V^\perp, \|f\|_\pi=1}\langle -L[f],f\rangle_\pi\right\}.
    \end{equation}
    
    It suffices to lower bound the above two terms respectively. Recalling that $\langle -L[f],f\rangle_\pi=\left\langle -\frac{L+L*}{2}[f],f\right\rangle_\pi$, and that $\frac{L+L*}{2}$ preserves the space $W^\perp$, then for any $f\in V'$ with $\|f\|_\pi=1$, 
    \begin{align*}
        \langle -L[f],f\rangle_\pi&=\langle -L[f_W], f_W\rangle_\pi+\langle -L[f_{W^\perp}], f_{W^\perp}\rangle_\pi\\
        &\geq \lambda \|f_W\|_\pi^2+\lambda_2 \|f_{W^\perp}\|_\pi^2\\
        &=\lambda_2-(\lambda_2-\lambda)\cdot\|f_W\|_\pi^2,
    \end{align*}
    where we can decompose $f_W$ as 
    \begin{equation}\label{eq:decomposition of f_W}
        \|f_W\|_\pi^2=\sum_{i\in \mathcal{J}}\left(\langle f,u_i\rangle_\pi\right)^2=\sum_{i\in \mathcal{J}}\left(\langle f,(u_i)_{V}\rangle_\pi\right)^2.
    \end{equation}
    To give an upper bound of the above summation, we define an operator $T:L^2(\pi)\to L^2(\pi)$ such that
    \begin{equation*}
        T[\phi]:=\sum_{i\in \mathcal{J}}\langle \phi_V, (u_i)_V\rangle_\pi\cdot (u_i)_V, 
    \end{equation*}
    then we have 
    \begin{equation*}
        \|f_W\|_\pi^2=\langle f, T[f]\rangle_\pi.
    \end{equation*}
    Next, we observe that $T=\mathbf{P}_V\mathbf{P}_W\mathbf{P}_V$, and hence $T$ is self-adjoint. Actually, for any $\phi\in L^2(\pi)$, 
    \begin{align*}
        \mathbf{P}_V\mathbf{P}_W\mathbf{P}_V[\phi]&=\mathbf{P}_V\left[\sum_{i\in \mathcal{J}}\langle \mathbf{P}_V[\phi], u_i\rangle_\pi\cdot u_i\right]
        =\sum_{i\in \mathcal{J}}\langle \mathbf{P}_V[\phi], u_i\rangle_\pi\cdot \mathbf{P}_V[u_i]\\
        &=\sum_{i\in \mathcal{J}}\langle \phi_V, (u_i)_V\rangle_\pi\cdot (u_i)_V=T[\phi].
    \end{align*}
    Therefore, 
    \begin{equation*}
        \|f_W\|_\pi^2\leq \|T\|_{2\to 2}=\|\mathbf{P}_V\mathbf{P}_W\mathbf{P}_V\|_{2\to 2}\leq 1,
    \end{equation*}
    where the norm of projection operator is bounded by $1$, and hence 
    \begin{equation}\label{eq:spectral gap on V'}
        \inf_{f\in V',\|f\|_\pi=1}\langle -L[f],f\rangle_\pi\geq \|\mathbf{P}_V\mathbf{P}_W\mathbf{P}_V\|_{2\to 2}\cdot \lambda+\left(1-\|\mathbf{P}_V\mathbf{P}_W\mathbf{P}_V\|_{2\to 2}\right)\cdot \lambda_2\geq \lambda.
    \end{equation}
    For the second term in \eqref{eq:lower bound lambda(bar P), two terms}, we similarly have 
    \begin{equation*}
        \inf_{f\in V^\perp, \|f\|_\pi=1}\langle -L[f],f\rangle_\pi\geq \|\mathbf{P}_{V^\perp}\mathbf{P}_W\mathbf{P}_{V^\perp}\|_{2\to 2}\cdot \lambda+\left(1-\|\mathbf{P}_{V^\perp}\mathbf{P}_W\mathbf{P}_{V^\perp}\|_{2\to 2}\right)\cdot \lambda_2\geq \lambda.
    \end{equation*}
    Plugging into \eqref{eq:lower bound lambda(bar P), two terms}, we get the first inequality. Now, we define the cosine of two subspaces
    \begin{equation*}
        \alpha(V,W):=\sup_{\substack{\phi_1\in V, \phi_2\in W, \\\|\phi_1\|_\pi=\|\phi_2\|_\pi=1}} |\langle \phi_1, \phi_2\rangle_\pi|.
    \end{equation*}
    It is well known that 
    \begin{equation*}
        \alpha(V,W)=\sup_{\phi_1\in V, \|\phi_1\|_\pi=1}\|\mathbf{P}_W [\phi_1]\|_\pi=\sup_{\phi_2\in W, \|\phi_2\|_\pi=1}\|\mathbf{P}_V [\phi_2]\|_\pi,
    \end{equation*}
    and $\alpha(V,W)<1$ iff $V\cap W=\{0\}$. With the assumption of $V\cap W=\{0\}$, we have 
    \begin{align*}
        \|\mathbf{P}_V\mathbf{P}_W\mathbf{P}_V\|_{2\to 2}&\leq\sup_{\phi\in V, \|\phi\|_\pi=1}\|\mathbf{P}_V\mathbf{P}_W[\phi]\|_{\pi}\\
        &\leq \alpha(V,W)\cdot\sup_{\phi\in V, \|\phi\|_\pi=1}\|\mathbf{P}_W[\phi]\|_{\pi}\\
        &=\alpha^2(V,W)<1.
    \end{align*}
    Similarly, we can substitute $V^\perp$ to $V$ and get the first part of result. 

    If $|\mathcal{J}|=1$ and $W$ is expanded by $u$, we can rewrite \eqref{eq:decomposition of f_W} as 
    \begin{equation*}
        \|f_W\|_\pi^2=\langle f,u_V\rangle_\pi^2\leq \|u_V\|_\pi^2,
    \end{equation*}
    recalling that $\|u_V\|_\pi^2+\|u_{V^\perp}\|_\pi^2=1$, we get the rest of the result.
\end{proof}

For a non-reversible Markov kernel $P \in \mathcal{S}(\pi)$, the right spectral gap is useful for bounding the mixing time of its continuous-time (Poissonized) version, whereas for the discrete-time chain the second largest singular value is also of interests. We therefore introduce
\begin{equation*}
    \gamma=\gamma(P):=\lambda(\sqrt{PP^*}),
\end{equation*}
which is also referred to as multiplicative spectral gap of $P$ in the literature, while $\lambda(P)$ is called additive spectral gap. Up to constant factors, $\gamma(P)$ plays the same role in upper bounds on the mixing time of non-reversible chains as the usual spectral gap $\lambda(P)$ does for reversible ones, see \citep[Section 1.3]{montenegro2006mathematical}. Moreover, it is easy to see that 
\begin{align*}
    1-\gamma(P)&=\left\|P^*\right\|_{2\to 2}=\left\|P\right\|_{2\to 2}=1-\gamma(P^*)\\
    &=\left\|PP^*\right\|_{2\to 2}^{1/2}=\left\|P^*P\right\|_{2\to 2}^{1/2}.
\end{align*}
The motivation for studying the improvement of $\gamma$ in this article lies in the fact that $P_{la}$ and $P_{ra}$ (and many other cases of $P_{da}(G,\nu)$) are in general non-reversible, even if $P$ is reversible (recall Proposition \ref{prop:inheritance of properties}). Similar to $W$ the eigenspace of the additive spectral gap, we define
\begin{equation*}
    \widetilde{W}=\widetilde{W}(P):=\left\{f\in L^2(\pi): (I-\sqrt{PP^*})f=\gamma(P)f\right\}
\end{equation*}
as the eigenspace of the multiplicative spectral gap, and we also define $\widetilde{W}(P^*)$ in a similar way. 

In the following result, we proceed to study the general case of $P_{da}(G,\nu)$. We show that $\gamma$ is no smaller for $P_{da}(G,\nu)$ for any $\nu$ compared with $P$, and particularly for $P_{la}$, $P_{ra}$ and $(P_{la})_{ra}$ we give tighter bounds for such improvement. The proof is largely based on Theorem \ref{thm:spectral improvement of overline P}.

\begin{theorem}\label{thm:singular value improvement}
    Under the setting and notations in Theorem \ref{thm:spectral improvement of overline P}, we further define $\gamma_2(P)$ as the third smallest eigenvalue of $I-\sqrt{PP^*}$, and analogously for $\gamma_2(P^*)$, then the following statements hold.
    \begin{enumerate}[label=(\roman*)]
        \item For any $\nu\in \mathcal{P}(G\times G)$, $P_{da}(G,\nu)$ satisfies 
        \begin{equation*}
            \gamma(P_{da}(G,\nu))\geq \gamma(P), \quad
            \lambda(P_{da}(G,\nu))\geq \gamma(P).
        \end{equation*}

        \item Particularly, we have $\gamma((P_{la})_{ra})\geq \max\{\gamma (P_{la}), \gamma(P_{ra})\}$. Moreover, 
        \begin{equation*}
            \gamma(P_{la})\geq 1-\sqrt{\beta\left(1-\gamma(P)\right)^2+(1-\beta)\left(1-\gamma_2(P)\right)^2}\geq \gamma(P),
        \end{equation*}
        where $\beta:=\left\|\mathbf{P}_V\mathbf{P}_{\widetilde{W}(P)}\mathbf{P}_V\right\|_{2\to 2}$, and $\gamma(P_{la})>\gamma(P)$ if $\widetilde{W}(P)\cap V=\{0\}$. Similarly,
        \begin{equation*}
            \gamma(P_{ra})\geq 1-\sqrt{\beta'\left(1-\gamma(P)\right)^2+(1-\beta')\left(1-\gamma_2(P)\right)^2}\geq \gamma(P),
        \end{equation*}
        where $\beta':=\left\|\mathbf{P}_V\mathbf{P}_{\widetilde{W}(P^*)}\mathbf{P}_V\right\|_{2\to 2}$, and
        $\gamma(P_{ra})>\gamma(P)$ if $\widetilde{W}(P^*)\cap V=\{0\}$.
    \end{enumerate}
\end{theorem}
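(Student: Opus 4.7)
My plan for part (i) is to use a direct triangle (Jensen) inequality. Since $\pi$ is $G$-invariant each $U_g$ is unitary on $L^2(\pi)$ and preserves $L_0^2(\pi)$, so for $f\in L_0^2(\pi)$ with $\|f\|_\pi=1$,
\begin{equation*}
\|P_{da}(G,\nu)[f]\|_\pi \;\leq\; \mathbb{E}_{(g,h)\sim\nu}\|U_gPU_h[f]\|_\pi \;=\; \mathbb{E}_{(g,h)\sim\nu}\|PU_h[f]\|_\pi \;\leq\; \|P\|_{2\to 2}.
\end{equation*}
Taking suprema gives $\|P_{da}\|_{2\to 2}\leq\|P\|_{2\to 2}$, equivalently $\gamma(P_{da})\geq\gamma(P)$. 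The inequality $\lambda(P_{da})\geq\gamma(P)$ follows from the general comparison $\lambda(Q)\geq\gamma(Q)$ for any $\pi$-stationary $Q$: the symmetrization $(Q+Q^*)/2$ is self-adjoint on $L_0^2(\pi)$, so the supremum of its spectrum is bounded above by its operator norm, which is in turn $\leq\|Q\|_{2\to 2}=1-\gamma(Q)$. Applying this with $Q=P_{da}$ and chaining with the previous inequality yields the claim.

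For part (ii), the key observation is the operator-theoretic identification
\begin{equation*}
P_{la} \;=\; \mathbf{P}_V P, \qquad P_{ra} \;=\; P\,\mathbf{P}_V,
\end{equation*}
which follows directly from the formula $f_V=\mathbb{E}_{g\sim\mu}(U_g[f])$ derived in the proof of Theorem~\ref{thm:spectral improvement of overline P}. The bound $\gamma((P_{la})_{ra})\geq\max\{\gamma(P_{la}),\gamma(P_{ra})\}$ is then immediate from part (i): by definition $(P_{la})_{ra}$ is the right-average of $P_{la}$, and by \eqref{eq:Plararala} it also equals the left-average of $P_{ra}$, so applying part (i) twice (once with $P$ replaced by $P_{la}$, once with $P$ replaced by $P_{ra}$) gives both lower bounds.

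For the quantitative bound on $\gamma(P_{la})$, I would combine $\|P_{la}\|_{2\to 2}^2=\|P_{la}P_{la}^*\|_{2\to 2}$ with $P_{la}^*=(\mathbf{P}_V P)^*=P^*\mathbf{P}_V$ (self-adjointness of $\mathbf{P}_V$) to write
\begin{equation*}
\|P_{la}\|_{2\to 2}^2 \;=\; \sup_{\substack{f\in L_0^2(\pi)\\ \|f\|_\pi=1}} \|P^*\mathbf{P}_V f\|_\pi^2 \;=\; \sup_{\substack{g\in V'\\ \|g\|_\pi\leq 1}} \langle PP^* g, g\rangle_\pi,
\end{equation*}
where the substitution $g=\mathbf{P}_V f$ lands in $V'=V\cap L_0^2(\pi)$ since $\mathbf{1}\in V$ and $\mathbf{P}_V$ is self-adjoint. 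Because $P$ is compact, $PP^*$ is compact and self-adjoint on $L_0^2(\pi)$, with top eigenvalue $(1-\gamma(P))^2$ and eigenspace $\widetilde W(P)$, and next eigenvalue $(1-\gamma_2(P))^2$; decomposing unit $g\in V'$ as $g=g_{\widetilde W}+g_{\widetilde W^\perp}$ yields
\begin{equation*}
\langle PP^* g, g\rangle_\pi \;\leq\; (1-\gamma)^2\,\|g_{\widetilde W}\|_\pi^2 + (1-\gamma_2)^2\,\bigl(1-\|g_{\widetilde W}\|_\pi^2\bigr).
\end{equation*}
The crucial step, adapted from the proof of Theorem~\ref{thm:spectral improvement of overline P}, is then to bound $\|g_{\widetilde W}\|_\pi^2 = \langle \mathbf{P}_V\mathbf{P}_{\widetilde W}\mathbf{P}_V g,g\rangle_\pi \leq \beta$ using $g\in V$, after which the stated bound on $\gamma(P_{la})$ follows by taking square roots. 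The strictness $\gamma(P_{la})>\gamma(P)$ under $\widetilde W(P)\cap V=\{0\}$ then follows from the cosine-of-subspaces estimate in Theorem~\ref{thm:spectral improvement of overline P}, which gives $\beta\leq\alpha^2(V,\widetilde W)<1$. Finally, the $P_{ra}$ inequality reduces to the $P_{la}$ one through
\begin{equation*}
\|P_{ra}\|_{2\to 2} \;=\; \|P_{ra}^*\|_{2\to 2} \;=\; \|\mathbf{P}_V P^*\|_{2\to 2} \;=\; \|(P^*)_{la}\|_{2\to 2},
\end{equation*}
applying the $P_{la}$ bound to $P^*$ and using $\gamma(P^*)=\gamma(P)$ together with $\gamma_2(P^*)=\gamma_2(P)$ (since $PP^*$ and $P^*P$ share non-zero spectrum). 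The main conceptual obstacle is precisely recognizing the factorizations $P_{la}=\mathbf{P}_V P$ and $P_{ra}=P\mathbf{P}_V$, which convert the singular-value computation into a constrained quadratic-form supremum over $V'$ and lets the $V$-to-$\widetilde W$ cosine machinery from Theorem~\ref{thm:spectral improvement of overline P} be transported from the additive gap to the multiplicative one.
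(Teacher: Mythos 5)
Your proposal is correct and follows essentially the same route as the paper: item (i) via Jensen's inequality applied to $\|P_{da}\|_{2\to 2}$ and the general comparison $\lambda \geq \gamma$, and item (ii) via the factorizations $P_{la}=\mathbf{P}_V P$, $P_{ra}=P\mathbf{P}_V$ reducing the singular-value bound to a constrained quadratic form over $V'$ controlled by the projection operator $\mathbf{P}_V\mathbf{P}_{\widetilde W}\mathbf{P}_V$ from Theorem~\ref{thm:spectral improvement of overline P}. The only cosmetic difference is that you handle $P_{ra}$ by passing to $(P^*)_{la}$ rather than re-running the same computation with $P^*P$ in place of $PP^*$, which is an equivalent presentation.
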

\begin{proof}
    For item (i), we have
    \begin{align*}
        1-\gamma(P_{da}(G,\nu))&=\left\|\mathbb E_{(g,h)\sim \nu}\left(U_g P U_h\right)\right\|_{2\to 2}\leq \mathbb E_{(g,h)\sim \nu}\left(\left\|U_g P U_h\right\|_{2\to 2}\right)\\
        &\leq \|P\|_{2\to 2}=1-\gamma(P),
    \end{align*}
    where we recall that $\|U_g\|_{2\to 2}=1$ for any $g\in G$. Moreover, the spectral gap of $P_{da}(G,\nu)$ equals to that of additive reversiblization, i.e.
    \begin{equation}\label{eq:additive reversiblization of P_da}
        K=\frac{1}{2}\left(\mathbb E_{(g,h)\sim \nu}\left(U_gPU_h\right)+\mathbb E_{(g,h)\sim \nu}\left(U_h^{-1}P^*U_g^{-1}\right)\right),
    \end{equation}
    which is reversible, and 
    \begin{equation*}
        \left\|K\right\|_{2\to 2}\leq \frac{1}{2}\left(\left\|P\right\|_{2\to 2}+\left\|P^*\right\|_{2\to 2}\right)=\left\|P\right\|_{2\to 2}.
    \end{equation*}
    
    For item (ii), we define $Q:=\mathbb E_{g\sim \mu}\left(U_g\right)$, then it is easy to check that $Q=\mathbf{P}_V$, and $Q$ is a reversible Markov kernel with $\pi Q=\pi$. Moreover, we can observe that 
    \begin{equation}\label{eq:QP, PQ, QPQ}
        P_{la}=QP, \quad P_{ra}=PQ, \quad (P_{la})_{ra}=QPQ,
    \end{equation}
    and the first inequality comes from item (i). This also implies 
    \begin{align}
        1-\gamma(P_{la})&=\left\|QP\right\|_{2\to 2}=\left\|QPP^*Q\right\|_{2\to 2}^{1/2},\label{eq:1-gamma(P_la)}\\
        1-\gamma(P_{ra})&=\left\|PQ\right\|_{2\to 2}=\left\|QP^*PQ\right\|_{2\to 2}^{1/2}.\label{eq:1-gamma(P_ra)}
    \end{align}
    For \eqref{eq:1-gamma(P_la)}, we have 
    \begin{align*}
        \left\|QPP^*Q\right\|_{2\to 2}&=\sup_{f\in L_0^2(\pi), \|f\|_\pi=1}\langle QPP^*Q[f], f\rangle_\pi=\sup_{f\in L_0^2(\pi), \|f\|_\pi=1}\langle PP^*Q[f], Q[f]\rangle_\pi\\
        &=\sup_{f\in V', \|f\|_\pi=1}\langle PP^*[f],f\rangle_\pi\\
        &\leq 1-\left(\beta \lambda(PP^*)+(1-\beta)\lambda_2(PP^*)\right)\\
        &=\beta \left(1-\gamma(P)\right)^2+(1-\beta)\left(1-\gamma_2(P)\right)^2,
    \end{align*}
    where the inequality comes from \eqref{eq:spectral gap on V'}, and we recall that $PP^*$ and $\sqrt{PP^*}$ share the same eigenspace corresponding to their second largest eigenvalue. The last equality uses the fact that $\lambda(PP^*)=1-(1-\gamma(P))^2$ and $\lambda_2(PP^*)=1-(1-\gamma_2(P))^2$. Now plugging into \eqref{eq:1-gamma(P_la)} we get the estimate for $\gamma(P_{la})$. The condition for equality to hold comes from Theorem \ref{thm:spectral improvement of overline P}. Applying the same argument for \eqref{eq:1-gamma(P_ra)}, we get the result. 
\end{proof}

Theorem \ref{thm:singular value improvement} item (i) shows that all $P_{da}$ demonstrate spectral improvement, which naturally raises the question: which averaging method yields the most substantial improvement? As a direct consequence of Theorem \ref{thm:singular value improvement}, we show that $(P_{la})_{ra}$ is the state-of-the-art in terms of offering the largest multiplicative spectral gap among all general-double-averages.

\begin{corollary}\label{cor:QPQ is the best}
    Under the setting and notations in Theorem \ref{thm:spectral improvement of overline P} and \ref{thm:singular value improvement}, for any $\nu\in \mathcal{P}(G\times G)$, we have 
    \begin{equation*}
        \gamma((P_{la})_{ra})\geq \gamma(P_{da}(G,\nu)).
    \end{equation*}
\end{corollary}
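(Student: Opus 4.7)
The plan is to prove this as an immediate consequence of two identities already established in the excerpt, without needing to invoke the more delicate eigenspace analysis from Theorem \ref{thm:singular value improvement}. The first ingredient is equation \eqref{eq:Pdalara}, which asserts that re-averaging the kernel $P_{da}(G,\nu)$ via the independent-double-average recovers the original $(P_{la})_{ra}$, i.e.\ $((P_{da}(G,\nu))_{la})_{ra} = (P_{la})_{ra}$. The second ingredient is the identity $(K_{la})_{ra} = QKQ$ for a generic $\pi$-stationary kernel $K$, where $Q := \mathbb{E}_{g\sim\mu}(U_g) = \mathbf{P}_V$; this is just \eqref{eq:QP, PQ, QPQ} applied with $K$ in place of $P$. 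Taking $K = P_{da}(G,\nu)$ and chaining the two identities gives the factorization
\begin{equation*}
(P_{la})_{ra} \;=\; Q \cdot P_{da}(G,\nu) \cdot Q,
\end{equation*}
which is the one identity that really does the work.

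From there the corollary is a two-line norm estimate. I would take the $L_0^2(\pi)\to L_0^2(\pi)$ operator norm of both sides and apply sub-multiplicativity together with $\|Q\|_{2\to 2} \le 1$ (since $Q=\mathbf{P}_V$ is an orthogonal projection) to get
\begin{equation*}
\|(P_{la})_{ra}\|_{2\to 2} \;\le\; \|Q\|_{2\to 2}^{2}\,\|P_{da}(G,\nu)\|_{2\to 2} \;\le\; \|P_{da}(G,\nu)\|_{2\to 2}.
\end{equation*}
Rewriting via the identity $1-\gamma(\cdot)=\|\cdot\|_{2\to 2}$ on $L_0^2(\pi)$ recorded before Theorem \ref{thm:singular value improvement} yields $\gamma((P_{la})_{ra})\ge \gamma(P_{da}(G,\nu))$, which is exactly the claim.

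The only bookkeeping check is that the operator norm bounds above should genuinely be taken on $L_0^2(\pi)$ rather than all of $L^2(\pi)$, so $Q$ must preserve $L_0^2(\pi)$. This is immediate: the constant function $\mathbf{1}$ is $G$-invariant and therefore lies in $V$, so $\mathbf{P}_V$ sends $\mathbf{1}^\perp = L_0^2(\pi)$ into itself. I do not expect any serious obstacle; the conceptual step is simply recognizing that \eqref{eq:Pdalara} combined with \eqref{eq:QP, PQ, QPQ} identifies $(P_{la})_{ra}$ as a two-sided compression of $P_{da}(G,\nu)$ by a projection, after which maximizing the multiplicative spectral gap among $P_{da}$'s is nothing more than the standard fact that compressing by a contraction cannot increase the norm.
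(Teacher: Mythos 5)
Your proof is correct and follows essentially the same route as the paper: both hinge on \eqref{eq:Pdalara} to identify $(P_{la})_{ra}$ as the re-average of $P_{da}(G,\nu)$, and then invoke the fact that averaging/compression cannot increase the $L_0^2(\pi)\to L_0^2(\pi)$ operator norm. The only cosmetic difference is that you spell out the compression bound via $(P_{la})_{ra}=Q\,P_{da}(G,\nu)\,Q$ with $\|Q\|_{2\to 2}\le 1$, whereas the paper simply cites Theorem \ref{thm:singular value improvement}(i), whose proof carries out the equivalent estimate by writing the double-average as a mixture and applying the triangle inequality.
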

\begin{proof}
    \eqref{eq:Pdalara} shows that for any $\nu\in \mathcal{P}(G\times G)$, the independent-double-average is always equal to $(P_{la})_{ra}$, and the result comes from Theorem \ref{thm:singular value improvement} item (i).
\end{proof}

Corollary \ref{cor:QPQ is the best} demonstrates that $(P_{la})_{ra}$ is optimal when spectral improvement is the convergence assessment metric. Disregarding computational cost temporarily, one can consider using larger groups to achieve further enhancement of its spectral properties. Based on Theorem \ref{thm:singular value improvement}, in the following result we provide a justification for this intuition.
\begin{corollary}[Monotonicity of $\gamma$ with respect to group size]\label{cor:larger group is better}
    Assume $P\in \mathcal{S}(\pi)$. Let $G_1\leq G_2$ be two locally compact topological groups with Harr measure $\mu_1$ and $\mu_2$ respectively that act on $\mathcal{X}$. Assume $\pi$ is $G_2$-invariant (and hence $G_1$-invariant). For $i=1,2$, denote $(P_{la})_{ra}(G_i)$ as the independent-double-average of $P$ under group $G_i$. Then we have 
    \begin{equation*}
        \gamma((P_{la})_{ra}(G_2))\geq \gamma((P_{la})_{ra}(G_1)).
    \end{equation*}
\end{corollary}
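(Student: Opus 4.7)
The plan is to reduce the inequality $\gamma((P_{la})_{ra}(G_2)) \geq \gamma((P_{la})_{ra}(G_1))$ to a comparison of operator norms by exploiting the fact, established earlier via \eqref{eq:QP, PQ, QPQ}, that $(P_{la})_{ra}(G_i) = Q_i P Q_i$, where $Q_i := \mathbb{E}_{g \sim \mu_i}(U_g) = \mathbf{P}_{V(G_i)}$ is the orthogonal projection onto the $G_i$-invariant subspace $V(G_i)$. Since $1 - \gamma(R) = \|R\|_{2 \to 2}$ for any $\pi$-stationary $R$, it suffices to show the operator-norm inequality
\begin{equation*}
    \|Q_2 P Q_2\|_{2 \to 2} \leq \|Q_1 P Q_1\|_{2 \to 2}.
\end{equation*}

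The key structural observation is that $G_1 \leq G_2$ forces $V(G_2) \subseteq V(G_1)$: any function invariant under the larger group is a fortiori invariant under the smaller one. I would then verify the two projection identities
\begin{equation*}
    Q_1 Q_2 = Q_2, \qquad Q_2 Q_1 = Q_2.
\end{equation*}
The first is immediate because the range of $Q_2$ lies in $V(G_2) \subseteq V(G_1)$, on which $Q_1$ acts as the identity. For the second, I would use the orthogonal decomposition $f = Q_1 f + (f - Q_1 f)$ and note that $f - Q_1 f \in V(G_1)^\perp \subseteq V(G_2)^\perp$ (the containment reverses under taking orthogonal complements because $V(G_2) \subseteq V(G_1)$), so $Q_2(f - Q_1 f) = 0$ and hence $Q_2 Q_1 f = Q_2 f$.

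With these identities in hand, I would write
\begin{equation*}
    Q_2 P Q_2 = Q_2 Q_1 P Q_1 Q_2,
\end{equation*}
and conclude by submultiplicativity of the operator norm together with $\|Q_2\|_{2 \to 2} \leq 1$ (as $Q_2$ is an orthogonal projection, equivalently a $\pi$-stationary Markov kernel):
\begin{equation*}
    \|Q_2 P Q_2\|_{2 \to 2} = \|Q_2 Q_1 P Q_1 Q_2\|_{2 \to 2} \leq \|Q_2\|_{2 \to 2}^{2} \cdot \|Q_1 P Q_1\|_{2 \to 2} \leq \|Q_1 P Q_1\|_{2 \to 2}.
\end{equation*}
Rearranging yields the desired monotonicity of $\gamma$.

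There is essentially no hard step here — the argument is a short operator-theoretic manipulation once the inclusion $V(G_2) \subseteq V(G_1)$ is noted. The only point that deserves care is the verification that $Q_i$ really equals $\mathbf{P}_{V(G_i)}$ (the $\pi$-invariance of $\mathfrak{m}$ from Assumption \ref{assum:pi and G} together with $\pi \in \mathcal{I}(G_2)$ ensures each $U_g$ is unitary, so the averaging $\mathbb{E}_{g \sim \mu_i}(U_g)$ is indeed an orthogonal projection onto the fixed-point subspace), but this has been used implicitly already in the proof of Theorem \ref{thm:singular value improvement}(ii) and can be invoked directly.
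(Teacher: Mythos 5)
Your proposal is correct and follows essentially the same route as the paper: both arguments rest on the inclusion $V(G_2)\subseteq V(G_1)$, the identification $(P_{la})_{ra}(G_i)=\mathbf{P}_{V_i}P\mathbf{P}_{V_i}$ from \eqref{eq:QP, PQ, QPQ}, and the resulting identity $(P_{la})_{ra}(G_2)=\mathbf{P}_{V_2}(P_{la})_{ra}(G_1)\mathbf{P}_{V_2}$, from which the gap comparison follows by contractivity of the projections. Your version merely makes explicit the final submultiplicativity step that the paper leaves as ``the desired result follows.''
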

\begin{proof}
    Let $V_1:=V(G_1)$ and $V_2:=V(G_2)$ be the two invariant subspaces induced by $G_1$ and $G_2$ respectively, recalling the definition in \eqref{eq:G-invariant subspace}, and hence $V_2\subseteq V_1$. From \eqref{eq:QP, PQ, QPQ}, we can write
    \begin{equation*}
        (P_{la})_{ra}(G_1)=\mathbf{P}_{V_1}P\mathbf{P}_{V_1}, \quad (P_{la})_{ra}(G_2)=\mathbf{P}_{V_2}P\mathbf{P}_{V_2},
    \end{equation*}
    therefore
    \begin{equation*}
        (P_{la})_{ra}(G_2)=\mathbf{P}_{V_2}(P_{la})_{ra}(G_1)\mathbf{P}_{V_2},
    \end{equation*}
    and the desired result follows.
\end{proof}

\subsection{Comparison of asymptotic variance}

Another common metric in assessing the convergence of ergodic Markov chains is the asymptotic variance. The asymptotic variance of $f \in L^2_0(\pi)$ with respect to $P$ is, for any initial distribution $\mu$,
\begin{align}
	v(f,P) &:= \lim_{n \to \infty} \dfrac{1}{n} \mathrm{Var}_\mu\left(\sum_{i=1}^n f(X_i)\right)\nonumber\\
    &=\|f\|_\pi^2+2\sum_{k=1}^\infty \left\langle P^k[f], f\right\rangle_\pi.\label{eq:asympvar power sum version}
\end{align}
A useful variational characterization of asymptotic variance for $P \in \mathcal{L}(\pi)$ \cite{S18} is given by
\begin{align}\label{eq:asympvvar}
	v(f,P) = \sup_{\phi \in L^2_0(\pi)} 4 \langle f,\phi \rangle_\pi - 2 \langle (I-P) [\phi], \phi \rangle_\pi - \langle f,f \rangle_\pi.
\end{align}
From this definition we readily check that, for $G$-invariant $\pi$ and $g \in G$,
\begin{align*}
	v(f,P) = v(U_g f, U_g P U_g^{-1}).
\end{align*} 

In the next result, we show that for any reversible $P_{da}(G,\nu)$, it can lead to an asymptotic variance that is no greater than that of $P$, under suitable assumptions. We also investigate situations where $v(f,P) = v(f, P_{da}(G,\nu))$ and the worst-case asymptotic variance, where we adapt and recall the notations as in Section \ref{subsec:spectralgap}. 
\begin{theorem}\label{thm:asympvar improvement}
    Let $P\in \mathcal{L}(\pi)$ be $\pi$-reversible. Let $G$ be a locally compact topological group with Haar measure $\mu$ that acts on $\mathcal{X}$, and assume that $\pi$ is $G$-invariant. Let $A:=-L$ for simplicity of presentation. We further assume that marginals of $\nu$ on both coordinates are the Harr measure $\mu$, that is, $g,h\sim\mu$ and $(g,h)\overset{D}{=}(h^{-1}, g^{-1})$ so that $P_{da}(G,\nu) \in \mathcal{L}(\pi)$ (recall Proposition \ref{prop:inheritance of properties}). The following statements hold:
    \begin{enumerate}[label=(\roman*)]
        \item If $f\in V'$, 
        \begin{equation*}
            v(f, P_{da}(G,\nu)) = v(f, P)-2\left\|\mathbf{P}_{A^{-1/2}V^\perp}A^{-1/2}[f]\right\|_\pi^2,
        \end{equation*}
        and $v(f, P_{da}(G,\nu))=v(f, P)$ iff $f\in AV'\cap V'$, where $AV':=\{A\phi: \phi\in V'\}$.

        \item Assume further that $P$ is compact. The worst-case asymptotic variance of $P_{da}(G,\nu)$ is at least no larger than that of $P$, that is,
        \begin{equation*}
            \sup_{f\in L_0^2(\pi), \|f\|_\pi=1} v(f, P_{da}(G,\nu))=\frac{2-\lambda(P_{da}(G,\nu))}{\lambda(P_{da}(G,\nu))}\leq \frac{2-\lambda(P)}{\lambda(P)}=\sup_{f\in L_0^2(\pi), \|f\|_\pi=1} v(f, P).
        \end{equation*}
    \end{enumerate}
\end{theorem}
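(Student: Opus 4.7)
The plan is to approach item (i) via the variational characterization \eqref{eq:asympvvar}. For any $\pi$-reversible $P$ with $A := I - P$, the unconstrained supremum is attained at $\phi^\ast = A^{-1}f$ and yields $v(f, P) = 2\|A^{-1/2}f\|_\pi^2 - \|f\|_\pi^2$; the same identity applies to $v(f, P_{da})$ with $A_{da} := I - P_{da}$, which is legitimate because Proposition~\ref{prop:inheritance of properties} together with the symmetry of $\nu$ ensures $P_{da} \in \mathcal{L}(\pi)$. The crucial reduction is that for $f \in V'$ and any $h \in G$, $U_h[f] = f$, so invoking the marginal assumption on $\nu$ gives $P_{da}[f] = \mathbb{E}_{g \sim \mu}(U_g P[f]) = \mathbf{P}_V P[f]$, and hence $A_{da}[f] = \mathbf{P}_V A[f]$ on $V'$. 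Since $P_{da}$ carries $V' \to V'$ and is $\pi$-self-adjoint, a short adjoint-pairing argument shows $P_{da}$ also preserves $V^\perp$, so $A_{da}$ splits as a direct sum across $V' \oplus V^\perp$.

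Next I would decompose $\phi = \phi_V + \phi_{V^\perp}$ in the variational problem for $v(f, P_{da})$. The orthogonality $f \perp V^\perp$ together with the nonnegativity of $A_{da}$ forces the maximizing test function to have $\phi_{V^\perp} = 0$, and on $V'$ one has $\langle A_{da}\phi_V, \phi_V\rangle_\pi = \langle A\phi_V, \phi_V\rangle_\pi$ since $\mathbf{P}_V$ acts trivially on $\phi_V \in V$. Substituting $\psi = A^{1/2}\phi_V$ converts the unrestricted supremum (defining $v(f,P)$) and the restricted supremum (defining $v(f, P_{da})$) into maxima of $4\langle A^{-1/2}f, \psi\rangle_\pi - 2\|\psi\|_\pi^2$ over $L_0^2(\pi)$ and over $A^{1/2}(V')$ respectively, which evaluate to $2\|A^{-1/2}f\|_\pi^2$ and $2\|\mathbf{P}_{A^{1/2}V'}A^{-1/2}f\|_\pi^2$. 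A direct orthogonality computation using self-adjointness of $A^{1/2}$ identifies $(A^{1/2}V')^\perp = A^{-1/2}V^\perp$ inside $L_0^2(\pi)$, so the difference of the two expressions equals $2\|\mathbf{P}_{A^{-1/2}V^\perp}A^{-1/2}f\|_\pi^2$, matching the formula in (i). The equality case then collapses to $A^{-1/2}f \in A^{1/2}V'$, equivalently $f \in AV' \cap V'$.

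For item (ii), I would first recall the standard spectral identity $\sup_{f \in L_0^2(\pi),\, \|f\|_\pi = 1} v(f, P) = (2 - \lambda(P))/\lambda(P)$ for $\pi$-reversible $P$: since the spectrum of $A$ lies in $[\lambda(P), 2]$, functional calculus bounds $\langle A^{-1}f, f\rangle_\pi \leq \lambda(P)^{-1}\|f\|_\pi^2$ with equality on the bottom eigenspace (nonempty by compactness). The same identity applies to $P_{da}$, which is $\pi$-reversible under our hypotheses. Combining $\lambda(P_{da}(G,\nu)) \geq \gamma(P) = \lambda(P)$ from Theorem~\ref{thm:singular value improvement}(i) (the second equality using $\pi$-reversibility of $P$) with the decreasing monotonicity of $x \mapsto (2-x)/x$ on $(0, 2]$ then delivers the stated inequality.

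The main obstacle is making the $A^{-1/2}$ manipulations rigorous, in particular the subspace identity $(A^{1/2}V')^\perp = A^{-1/2}V^\perp$ within $L_0^2(\pi)$; this requires $A$ to have strictly positive spectrum (hence $\lambda(P) > 0$, implicit whenever the asymptotic variance is finite and guaranteed in (ii) by compactness), together with care about domains when applying $A^{\pm 1/2}$ to the vectors arising in the change of variables.
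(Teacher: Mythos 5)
Your treatment of item (i) is correct and is essentially the paper's own argument: reduce the variational characterization \eqref{eq:asympvvar} for $P_{da}$ to the supremum of $4\langle f,\phi\rangle_\pi-2\langle A[\phi],\phi\rangle_\pi-\langle f,f\rangle_\pi$ over $\phi\in V'$ (using that the quadratic forms of $P_{da}$ and $P$ agree on $V'$ and that the $V^\perp$-component of the test function can be discarded), then compare the constrained and unconstrained maxima. Your substitution $\psi=A^{1/2}\phi$ is the same computation the paper performs via the $A$-weighted projection $\mathbf{P}^A_{V'}=A^{-1/2}\mathbf{P}_{A^{1/2}V'}A^{1/2}$, and your identification $(A^{1/2}V')^\perp=A^{-1/2}V^\perp$ matches the paper's final step.

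Item (ii), however, has a genuine gap. You write $\lambda(P_{da}(G,\nu))\geq\gamma(P)=\lambda(P)$, with the second equality attributed to $\pi$-reversibility. This equality is false in general: for reversible $P$ one has $1-\gamma(P)=\|P\|_{2\to 2}=\max\{\lambda_{\max},-\lambda_{\min}\}$ (spectrum restricted to $L_0^2(\pi)$) while $1-\lambda(P)=\lambda_{\max}$, so $\gamma(P)=\lambda(P)$ only when $\lambda_{\max}\geq|\lambda_{\min}|$. For instance, the two-state swap kernel with uniform $\pi$ has $\lambda(P)=2$ but $\gamma(P)=0$. Hence Theorem \ref{thm:singular value improvement}(i) delivers only $\lambda(P_{da})\geq\gamma(P)$, which is strictly weaker than the inequality $\lambda(P_{da})\geq\lambda(P)$ that the monotonicity of $x\mapsto(2-x)/x$ requires. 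The obstruction is concrete: on $V'$ the quadratic form of $P_{da}$ coincides with that of $P$, so no loss occurs there, but on $V^\perp$ the only general control is $\langle P_{da}[u],u\rangle_\pi\leq\|P\|_{2\to 2}\|u\|_\pi^2=(1-\gamma(P))\|u\|_\pi^2$, which involves the most negative part of the spectrum of $P$ and cannot be upgraded to $(1-\lambda(P))\|u\|_\pi^2$ without an extra positivity or laziness hypothesis. So the comparison of worst-case asymptotic variances does not follow from the route you describe; a separate argument for $\lambda(P_{da}(G,\nu))\geq\lambda(P)$ (of the kind carried out for $\overline{P}$ in Theorem \ref{thm:spectral improvement of overline P}) is the actual content of item (ii).
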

\begin{proof}
    Assume $f\in V'$. We first notice that for $\phi\in L_0^2(\pi)$, 
    \begin{align*}
        \left\langle P_{da}(G,\nu)[\phi_{V'}],\phi_{V^\perp}\right\rangle_\pi&=\left\langle\mathbb E_{(g,h)\sim \nu}\left(U_gPU_h\right)[\phi_{V'}], \phi_{V^\perp}\right\rangle_\pi\\
        &=\left\langle P[\phi_{V'}], \mathbb E_{g\sim \mu}\left(U_g\right)[\phi_{V^\perp}]\right\rangle_\pi\\
        &=0,
    \end{align*}
    hence
    \begin{align*}
        \left\langle P_{da}(G,\nu)[\phi], \phi\right\rangle_\pi&=\left\langle P_{da}(G,\nu)[\phi_{V'}], \phi_{V'}\right\rangle_\pi+\left\langle P_{da}(G,\nu)[\phi_{V^\perp}], \phi_{V^\perp}\right\rangle_\pi\\
        &=\left\langle P[\phi_{V'}], \phi_{V'}\right\rangle_\pi+\left\langle P_{da}(G,\nu)[\phi_{V^\perp}], \phi_{V^\perp}\right\rangle_\pi\\
        &\leq \left\langle P[\phi_{V'}], \phi_{V'}\right\rangle_\pi+\|\phi_{V^\perp}\|_\pi^2,
    \end{align*}
    where the equality holds iff $\phi\in V'$, since the spectrum of $P_{da}(G,\nu)$ is bounded away from 1, according to Theorem \ref{thm:singular value improvement}. Therefore, from \eqref{eq:asympvvar} we have 
    \begin{equation*}
        v(f, P_{da}(G,\nu))=\sup_{\phi\in V'} 4\langle f, \phi\rangle_\pi -2\langle -L[\phi], \phi\rangle_\pi-\langle f,f\rangle_\pi.
    \end{equation*}
    For $\phi\in L_0^2(\pi)$, define
    \begin{equation*}
        H(\phi):=2\langle f,\phi\rangle_\pi-\langle A[\phi], \phi\rangle_\pi, 
    \end{equation*}
    where $A=-L$ is positive on $L_0^2(\pi)$. Then we have
    \begin{align*}
        v(f,P)-v(f, P_{da}(G,\nu))&=2\left(\sup_{\phi\in L_0^2(\pi)}H(\phi)-\sup_{\phi\in V'}H(\phi)\right)\\
        &=:2\left(H_{\text{max}}-\overline H_{\text{max}}\right),
    \end{align*}
    and we denote $\phi_*\in L_0^2(\pi)$ and $\overline \phi_*\in V'$ as the unique maximum points to attain the corresponding maximal values of $H$. For any $v\in L_0^2(\pi)$, we have 
    \begin{equation*}
        \frac{d}{d\varepsilon}H(\phi+\varepsilon v)\bigg|_{\varepsilon=0}=2\langle f-A[\phi], v\rangle_\pi,
    \end{equation*}
    hence $\phi_*=A^{-1}[f]$, and 
    \begin{equation}\label{eq:H_max}
        H_{\text{max}}=\langle f, A^{-1}[f]\rangle_\pi=\|A^{-1/2}[f]\|_\pi^2.
    \end{equation}
    If we further constrain $v\in V'$, then $\overline \phi_*$ should satisfy $f-A\overline \phi_*\perp V'$. Next, we define the $A$-weighted metric on $L_0^2(\pi)$ as 
    \begin{equation*}
        \langle u,v\rangle_A:=\langle A[u],v\rangle_\pi, \quad \forall u,v\in L_0^2(\pi),
    \end{equation*}
    and denote $\|\cdot\|_A$ as the induced norm, $\perp_A$ as the induced orthogonal relationship, and $\mathbf{P}_\cdot^A$ as the induced projection operator. Then we have 
    \begin{equation*}
        f-A[\overline \phi_*]\perp V' \iff A^{-1}[f]-\overline \phi_*\perp_A V',
    \end{equation*}
    which implies
    \begin{equation*}
        \overline \phi_*=\mathbf{P}_{V'}^A A^{-1}[f].
    \end{equation*}
    Now, we show that $\mathbf{P}_{V'}^A=A^{-1/2}\mathbf{P}_{A^{1/2}V'}A^{1/2}$. Denote the RHS as $R$, then $R[v]=v$ for $v\in V'$, and $R^2=R$. Moreover, for any $\phi\in L_0^2(\pi)$ and $w\in V'$, 
    \begin{align*}
        \langle \phi-R[\phi], w\rangle_A&=\left\langle A[\phi]-A^{1/2}\mathbf{P}_{A^{1/2}V'}A^{1/2}[\phi], w\right\rangle_\pi\\
        &=\left\langle \left(I-\mathbf{P}_{A^{1/2}V'}\right)A^{1/2}[\phi], A^{1/2}[w]\right\rangle_\pi\\
        &=0.
    \end{align*}
    Therefore, $\overline \phi_*=A^{-1/2}\mathbf{P}_{A^{1/2}V'}A^{-1/2}[f]$, and we have 
    \begin{equation*}
        \overline H_{\text{max}}=\left\langle f, A^{-1/2}\mathbf{P}_{A^{1/2}V'}A^{-1/2}[f] \right\rangle_\pi=\left\|\mathbf{P}_{A^{1/2}V'}A^{-1/2}[f]\right\|_\pi^2,
    \end{equation*}
    comparing with \eqref{eq:H_max}, and recalling that $A^{1/2}V'\perp A^{-1/2}V^\perp$, we get the result.

    For the worst-case asymptotic variance, we use \citep[equation $(3)$]{S18}, and the result comes from taking $f$ to be the eigenfunction corresponding to the respective spectral gaps. A tighter inequality for $\overline P$ can be obtained as a corollary of Theorem \ref{thm:spectral improvement of overline P}.
\end{proof}

\begin{remark}\label{remark:asympvar QP=PQ=QPQ}
    It is easy to see that $\overline P, \widetilde{P}$ and $(P_{la})_{ra}$ satisfy the assumption that $g,h\sim \mu$ and $(g,h)\overset{D}{=}(h^{-1}, g^{-1})$. $P_{la}$ and $P_{ra}$ generally fail to satisfy due to their non-reversibility. However, it can still be shown that for any $f\in V'$, if $P$ is reversible, then
    \begin{equation}\label{eq:asympvar QP=PQ=QPQ}
        v(f, P_{la})=v(f, P_{ra})=v(f, (P_{la})_{ra}),
    \end{equation}
    which comes from an observation that for $k \in \mathbb{N}$,
    \begin{align*}
        \left\langle \left((P_{la})_{ra}\right)^k[f], f\right\rangle_\pi&=\left\langle (QPQ)^k[f], [f]\right\rangle_\pi\\
        &=\left\langle (QP)^kQ[f], [f]\right\rangle_\pi\\
        &=\left\langle (QP)^k[f], [f]\right\rangle_\pi=\left\langle (PQ)^k[f], [f]\right\rangle_\pi,
    \end{align*}
    where we have used the notations in \eqref{eq:QP, PQ, QPQ}, and combining with \eqref{eq:asympvar power sum version} yields \eqref{eq:asympvar QP=PQ=QPQ}.
\end{remark}

\begin{remark}
    If $f\notin V'$, then the asymptotic variance $v(f, P_{da}(G,\nu))$ may be worse. Here is a simple example on the state space $\mathcal{X}=\{1,2,3\}$ with uniform stationary distribution $\pi(i)=\frac{1}{3}$. Take the two-element group $G=\{e, (12)\}$. Let
    \begin{gather*}
        P=\begin{pmatrix}
              0.09 & 0.5   & 0.41\\[3pt]
              0.5   & 0.12 & 0.38\\[3pt]
              0.41 & 0.38 & 0.21
        \end{pmatrix},
     \qquad  
        \overline P=\widetilde P=\begin{pmatrix}
                        0.105 & 0.5   & 0.395\\[3pt]
                        0.5   & 0.105 & 0.395\\[3pt]
                        0.395 & 0.395 & 0.21
                    \end{pmatrix},\\
        (P_{la})_{ra}=\begin{pmatrix}
                0.3025 & 0.3025 & 0.395 \\[3pt]
                0.3025 & 0.3025 & 0.395 \\[3pt]
                0.395 & 0.395 & 0.21 \\[3pt]
        \end{pmatrix}
    \end{gather*}
    then $V'=\mathrm{span}\left\{(1,1,-2)^T\right\}$. Take $f=\left(1,-0.5,-0.5\right)^T$, we have 
    \begin{equation*}
        v(f,P)\approx 0.2353, \quad v(f, \overline P)=v(f, \widetilde{P})\approx 0.2486, \quad v(f, (P_{la})_{ra})\approx 0.4610
    \end{equation*}
    and in this case asymptotic variances increase.
\end{remark}

\subsection{Comparison of the Cheeger's constant}

In this subsection, we focus on comparing the Cheeger's constant between $P$ and $\overline{P}$. For $\mathcal{F}$-measurable set $A$, we write $\mathbf{1}_A$ to be the indicator function of the set $A$. The Cheeger's constant of $P \in \mathcal{L}(\pi)$ is defined to be
\begin{align}\label{eq:Cheegerdef}
	\Phi(P) := \inf_{A;~ 0 < \pi(A) \leq \frac{1}{2}} \dfrac{\langle (I-P)[\mathbf{1}_A], \mathbf{1}_A \rangle_\pi }{\pi(A)}.
\end{align}

Our result in this subsection demonstrates that the two reversible averages $\overline{P}$ and $(P_{la})_{ra}$ have the Cheeger's constant at least as large as that of $P$.

\begin{theorem}
	Let $G$ be a locally compact topological group with Haar measure $\mu$ that acts on $\mathcal{X}$, and assume that $\pi$ is $G$-invariant. For $P \in \mathcal{L}(\pi)$, we have
	\begin{align*}
		\Phi(\overline{P}) \geq \Phi(P).
	\end{align*}
    If we further assume $P$ is non-negative (i.e. $\langle Pf,f \rangle_{\pi} \geq 0$ for all $f \in L^2(\pi)$), then 
    \begin{equation*}
        \Phi((P_{la})_{ra})\geq \Phi(P).
    \end{equation*}
\end{theorem}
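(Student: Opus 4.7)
The plan is to bound the Dirichlet numerator $\langle (I-P')[\mathbf{1}_A], \mathbf{1}_A\rangle_\pi$ uniformly from below by $\Phi(P)\,\pi(A)$ for every admissible set $A$ with $0<\pi(A)\leq 1/2$, then invoke the variational definition \eqref{eq:Cheegerdef}. The key asset throughout is the $G$-invariance of $\pi$: every translate $gA$ satisfies $\pi(gA)=\pi(A)\leq 1/2$, so the defining infimum for $\Phi(P)$ applies to every $G$-orbit representative of $A$.

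I would first handle $\overline P$, which is the easier half. Using $U_g^{\ast}=U_{g^{-1}}$ and $U_{g^{-1}}[\mathbf{1}_A]=\mathbf{1}_{gA}$, a direct manipulation gives $\langle U_g P U_g^{-1}[\mathbf{1}_A],\mathbf{1}_A\rangle_\pi = \langle P[\mathbf{1}_{gA}],\mathbf{1}_{gA}\rangle_\pi$, so averaging over the Haar measure yields
\[
\langle (I-\overline P)[\mathbf{1}_A],\mathbf{1}_A\rangle_\pi = \int_G \langle (I-P)[\mathbf{1}_{gA}],\mathbf{1}_{gA}\rangle_\pi\, \mu(dg) \geq \Phi(P)\, \pi(A),
\]
by applying the definition of $\Phi(P)$ pointwise to each set $gA$. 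Dividing by $\pi(A)$ and taking the infimum over $A$ closes this case.

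For $(P_{la})_{ra}$, the anticipated difficulty is that the summand $U_g P U_h$ is no longer a conjugation, so
\[
\langle U_g P U_h[\mathbf{1}_A],\mathbf{1}_A\rangle_\pi = \langle P[\mathbf{1}_{h^{-1}A}],\mathbf{1}_{gA}\rangle_\pi
\]
is a cross inner product between two possibly different translates and cannot be rewritten as a Dirichlet form of $P$ on a single indicator. My plan is to invoke the non-negativity assumption exactly at this point: since $P \succeq 0$ and self-adjoint on $L^2(\pi)$, functional calculus supplies a self-adjoint square root $\sqrt P$, and Cauchy--Schwarz gives
\[
\langle P[\mathbf{1}_{h^{-1}A}],\mathbf{1}_{gA}\rangle_\pi = \langle \sqrt P[\mathbf{1}_{h^{-1}A}], \sqrt P[\mathbf{1}_{gA}]\rangle_\pi \leq \sqrt{\langle P[\mathbf{1}_{h^{-1}A}],\mathbf{1}_{h^{-1}A}\rangle_\pi\, \langle P[\mathbf{1}_{gA}],\mathbf{1}_{gA}\rangle_\pi}.
\]
Each diagonal factor equals $\pi(A)-\langle (I-P)[\mathbf{1}_B],\mathbf{1}_B\rangle_\pi \leq (1-\Phi(P))\,\pi(A)$ by applying the definition of $\Phi(P)$ to $B\in\{h^{-1}A,gA\}$, whence $\langle U_g P U_h[\mathbf{1}_A],\mathbf{1}_A\rangle_\pi \leq (1-\Phi(P))\,\pi(A)$ uniformly in $(g,h)$. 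Integrating against $\mu\otimes\mu$ preserves this upper bound and delivers $\langle (I-(P_{la})_{ra})[\mathbf{1}_A],\mathbf{1}_A\rangle_\pi \geq \Phi(P)\,\pi(A)$.

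The main obstacle I expect is precisely the Cauchy--Schwarz step: reducing the mixed quadratic form $\langle P[\mathbf{1}_{h^{-1}A}],\mathbf{1}_{gA}\rangle_\pi$ to diagonal Dirichlet-type quantities of $P$. This is exactly where non-negativity is indispensable, via the factorization $P=\sqrt P\cdot\sqrt P$; it is not needed for $\overline P$ because the conjugation structure there already provides the splitting for free. Everything else is bookkeeping around the $G$-invariance of $\pi$ and the definition of $\Phi(P)$ on translated test sets.
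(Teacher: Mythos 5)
Your proof is correct, and its skeleton coincides with the paper's: bound the Dirichlet form evaluated at $\mathbf{1}_A$ by exploiting that $G$-invariance of $\pi$ keeps every translate $gA$ admissible with $\pi(gA)=\pi(A)\le\frac12$, then take the infimum over $A$. The $\overline P$ half is identical to the paper's argument. For $(P_{la})_{ra}$ the paper writes $\langle (P_{la})_{ra}\mathbf{1}_A,\mathbf{1}_A\rangle_\pi=\langle P[Q\mathbf{1}_A],Q\mathbf{1}_A\rangle_\pi$ with $Q\mathbf{1}_A=\mathbb{E}_{g\sim\mu}(\mathbf{1}_{gA})$ and invokes convexity of $f\mapsto\langle Pf,f\rangle_\pi$ (Jensen), whereas you expand into cross terms $\langle P[\mathbf{1}_{h^{-1}A}],\mathbf{1}_{gA}\rangle_\pi$ and control each one by Cauchy--Schwarz through the factorization $P=\sqrt P\,\sqrt P$. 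These are two faces of the same estimate—Jensen for a positive quadratic form is exactly the integrated version of your pointwise Cauchy--Schwarz (plus AM--GM) bound, and both use non-negativity of $P$ in the same essential way—so the difference is cosmetic rather than substantive. Your version makes slightly more explicit where self-adjointness of $P$ enters (to define $\sqrt P$), which is a harmless clarification since the theorem already assumes $P\in\mathcal{L}(\pi)$.
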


\begin{proof}
	For $\mathcal{F}$-measurable set $A$ with $0 < \pi(A) \leq \frac{1}{2}$, we first see that
	\begin{align*}
		\dfrac{\langle (I-\overline{P})[\mathbf{1}_A], \mathbf{1}_A \rangle_\pi }{\pi(A)} &= \int \dfrac{\langle (I-P)[\mathbf{1}_{gA}], \mathbf{1}_{gA} \rangle_\pi}{\pi(gA)} \, \mu(dg),
	\end{align*}
	where $\pi(gA) = \pi(A)$ follows from $G$-invariant $\pi$ and $U_g^{-1} \mathbf{1}_A = \mathbf{1}_{gA}$. Taking infimum over both sides with respect to the set $A$ and noting \eqref{eq:Cheegerdef} leads to
	\begin{align*}
		\Phi(\overline{P}) \geq \int \Phi(P) \, \mu(dg) = \Phi(P).
	\end{align*}
    
    Moreover, if $P$ is non-negative, it can be readily verified that the mapping $f\mapsto \langle Pf, f\rangle_\pi$ is convex in $f$. Recalling the notations in \eqref{eq:QP, PQ, QPQ}, we have 
    \begin{align*}
        \left\langle (P_{la})_{ra}\mathbf{1}_A, \mathbf{1}_A\right\rangle_\pi&=\left\langle QPQ\mathbf{1}_A, \mathbf{1}_A\right\rangle_\pi=\left\langle PQ\mathbf{1}_A, Q\mathbf{1}_A\right\rangle_\pi\\  
        &=\left\langle P\left[\mathbb E_{g\sim \mu}\left(\mathbf{1}_{gA}\right)\right], \mathbb E_{g\sim \mu}\left(\mathbf{1}_{gA}\right) \right\rangle_\pi\\
        &\leq \mathbb E_{g\sim \mu}\left(\left\langle P[\mathbf{1}_{gA}], \mathbf{1}_{gA}\right\rangle_\pi\right),
    \end{align*}
    and hence
    \begin{align*}
        \frac{\langle (I-(P_{la})_{ra})[\mathbf{1}_A], \mathbf{1}_A \rangle_\pi }{\pi(A)}\geq \frac{\mathbb E_{g\sim \mu}\left(\langle (I-P)[\mathbf{1}_A], \mathbf{1}_A\rangle_\pi\right)}{\pi(A)},
    \end{align*}
    Taking infimum over $A$ on both sides yields the result. 
\end{proof}

\section{Pythagorean identities, distance to isotropy and the group-induced averages as projections under the KL divergence}\label{sec:geometry}

The main aim of this section is to demonstrate that the group-induced averages $P_{da}$ can be understood as projections of $P$ under the $\pi$-weighted Kullback-Leibler (KL) divergence and suitable assumptions. This offers a geometric interpretation and justifies that the group-induced averages arise naturally. In addition, this allows us to define a notion of ``distance to isotropy" of a given Markov kernel $P$ on $\mathbb{R}^d$ under KL divergence and the group $G = \mathrm{SO}(d)$. This distance measures the KL divergence from the closest isotropic Markov kernel, $\overline{P}$, to $P$.

Recall that, for $P,M \in \mathcal{L}(\mathcal{X})$ and $\pi$ be a probability measure on $\mathcal{X}$, the \textbf{$\pi$-weighted Kullback-Leibler divergence} of \( P \) from \( M \), averaged over \( \pi \), is defined as
\begin{align*}
	D^\pi_{KL}(P \| M) := 
	\begin{cases}
		\displaystyle \int_\mathcal{X} \pi(dx) \int_\mathcal{X} P(x, dy) \log \left( \frac{dP(x, \cdot)}{dM(x, \cdot)}(y) \right), 
		& \text{if } P(x, \cdot) \ll M(x, \cdot) \text{ for } \pi\text{-a.e. } x, \\[1.5em]
		+\infty, & \text{otherwise.}
	\end{cases}
\end{align*}
Here, \( \frac{dP(x, \cdot)}{dM(x, \cdot)} \) denotes the Radon-Nikodym derivative of \( P(x, \cdot) \) with respect to \( M(x, \cdot) \), defined for \( \pi \)-almost every \( x \in \mathcal{X} \). When $\mathcal{X}$ is a finite state space, the $\pi$-weighted KL divergence of $P$ from $M$ is given by
\begin{align*}
	D^\pi_{KL}(P \| M) := \sum_{x,y \in \mathcal{X}} \pi(x) P(x,y) \log \left( \dfrac{P(x,y)}{M(x,y)} \right),
\end{align*}
where the usual convention of $0 \log \frac{0}{a} := 0$ applies for $a \in [0,1]$.
\begin{theorem}[Bisection properties]\label{thm:bisection}
	Let $G$ be a locally compact topological group with Haar measure $\mu$ that acts on $\mathcal{X}$, and $\pi$ is assumed to be $G$-invariant. Under Assumption \ref{assum:pi and G}, assume $P$ and $M$ admit a transition density w.r.t. $\mathfrak{m}$ at any starting state $x$. Under these assumptions, we have, for $g,h \in G$,
	\begin{align*}
		D^\pi_{KL}(P \| M) &= D^\pi_{KL}(U_g P U_h \| U_g M U_h).
	\end{align*}
\end{theorem}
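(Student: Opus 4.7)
The plan is to reduce the identity to two changes of variables on $\mathcal{X}$, exploiting the two key ingredients in Assumption \ref{assum:pi and G}: the reference measure $\mathfrak{m}$ is $G$-invariant (Jacobian equal to $1$) and $\pi$, having a density with respect to $\mathfrak{m}$, is $G$-invariant as a measure (which forces $\pi(g^{-1}w) = \pi(w)$ for $\mathfrak{m}$-a.e.\ $w$).

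First I would make the kernel $U_g P U_h$ concrete. Writing $p(x,\cdot)$ for the $\mathfrak{m}$-density of $P(x,\cdot)$ and expanding
\[
(U_g P U_h)[f](x) \;=\; \int f(hy)\, p(gx,y)\, \mathfrak{m}(dy),
\]
then substituting $z = hy$ and using $\mathfrak{m}\circ h^{-1} = \mathfrak{m}$ from Assumption \ref{assum:pi and G}, I would read off that $(U_g P U_h)(x,\cdot)$ has density $z \mapsto p(gx,\, h^{-1}z)$ with respect to $\mathfrak{m}$. The same argument applies to $M$, giving density $z \mapsto m(gx, h^{-1}z)$, and hence the Radon--Nikodym derivative
\[
\frac{d(U_g P U_h)(x,\cdot)}{d(U_g M U_h)(x,\cdot)}(z) \;=\; \frac{p(gx,\,h^{-1}z)}{m(gx,\,h^{-1}z)}.
\]

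Next I would plug these densities into the definition of $D^\pi_{KL}$, obtaining
\[
D^\pi_{KL}(U_g P U_h \,\|\, U_g M U_h) \;=\; \int \pi(x)\,\mathfrak{m}(dx) \int p(gx,h^{-1}z)\,\log\!\frac{p(gx,h^{-1}z)}{m(gx,h^{-1}z)}\,\mathfrak{m}(dz),
\]
and perform two successive changes of variables: $u = h^{-1}z$ in the inner integral (absorbing the Jacobian by Assumption \ref{assum:pi and G}) and $w = gx$ in the outer one. The outer substitution leaves $\mathfrak{m}(dx)$ untouched and turns $\pi(x)$ into $\pi(g^{-1}w)$, which equals $\pi(w)$ by $G$-invariance of $\pi$. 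What remains is exactly the integral expression for $D^\pi_{KL}(P\|M)$.

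I do not anticipate a serious technical obstacle; the only delicate point is handling the absolute-continuity case split in the definition of $D^\pi_{KL}$. Both changes of variables are bijective on the $\sigma$-algebra, so $P(x,\cdot) \ll M(x,\cdot)$ for $\pi$-a.e.\ $x$ if and only if $(U_g P U_h)(x,\cdot) \ll (U_g M U_h)(x,\cdot)$ for $\pi$-a.e.\ $x$ (using once more that $\pi$ and $\mathfrak{m}$ are $G$-invariant); hence the $+\infty$ case is transported consistently on both sides, and the identity holds in full generality.
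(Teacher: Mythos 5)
Your proof is correct and follows essentially the same route as the paper: identify the density of $(U_gPU_h)(x,\cdot)$ as $z \mapsto p(gx,h^{-1}z)$ and then transport the double integral by the changes of variable $u = h^{-1}z$, $w = gx$, invoking $\dfrac{d\mathfrak{m}\circ g^{-1}}{d\mathfrak{m}} = 1$ and $\pi(g^{-1}w) = \pi(w)$. The only difference is that you spell out the density computation and the absolute-continuity bookkeeping explicitly, whereas the paper compresses both into a single line of equalities.
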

\begin{proof}
    According to Assumption \ref{assum:pi and G} that $\dfrac{d\mathfrak{m}\circ g^{-1}}{d\mathfrak{m}}=1$ for any $g\in G$, we have 
	\begin{align*}
		D^\pi_{KL}(P \| M) &= \int_\mathcal{X\times \mathcal{X}} \pi(x) P(x, y) \log \left( \frac{P(x, y)}{M(x, y)} \right)\mathfrak{m}(dx)\mathfrak{m}(dy) \\
		&= \int_\mathcal{X\times \mathcal{X}}  \pi(x) P(gx, h^{-1}y) \log \left( \frac{P(gx, h^{-1}y)}{M(gx, h^{-1}y)} \right)\mathfrak{m}(dx)\mathfrak{m}(dy) \\
		&= D^\pi_{KL}(U_g P U_h \| U_g M U_h),
	\end{align*}
    then the result follows.
\end{proof}

Making use of Theorem \ref{thm:bisection}, we establish the Pythagorean identities under KL divergence.

\begin{theorem}[Pythagorean identity under KL divergence]\label{thm:pythKL}
	Assume that $\pi,G,P,M,\mathcal{X}$ satisfy the assumptions as stated in Theorem \ref{thm:bisection}, and $M \in \mathcal{D}(G,\nu)$. Assume that $P_{da}(G,\nu) \in \mathcal{D}(G,\nu)$. We have
	\begin{align*}
		D^\pi_{KL}(P \| M) &= D^\pi_{KL}(P \| P_{da}) + D^{\pi}_{KL}(P_{da} \| M).
	\end{align*}
	In particular, assuming $\pi$ is absolutely continuous with respect to the Lebesgue measure so that we take $M = \Pi$, we see that
	\begin{align*}
		D^\pi_{KL}(P \| \Pi) \geq D^{\pi}_{KL}(P_{da} \| \Pi).
	\end{align*}
\end{theorem}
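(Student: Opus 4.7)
The plan is to prove the Pythagorean identity by reducing it to a single orthogonality-type identity. Expanding the three KL divergences and cancelling the common $\int \pi P \log P$ contribution, the claim
\[
D^\pi_{KL}(P \| M) = D^\pi_{KL}(P \| P_{da}) + D^\pi_{KL}(P_{da} \| M)
\]
becomes equivalent to
\[
\int \pi(x) \bigl(P(x,y) - P_{da}(x,y)\bigr) \log \frac{P_{da}(x,y)}{M(x,y)} \mathfrak{m}(dx) \mathfrak{m}(dy) = 0,
\]
that is, to the statement that $\log(P_{da}/M)$ has the same $\pi$-weighted expectation against $P$ and against $P_{da}$. This reduction is routine algebra.

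To establish the reduced identity, the plan is to evaluate the $P_{da}$-integral by substituting the definition $P_{da}(x,y) = \mathbb{E}_{(g,h) \sim \nu}[P(gx, h^{-1}y)]$, swapping $\mathbb{E}_\nu$ with the outer $\mathfrak{m}$-integrations by Fubini, and then performing the change of variables $u = gx$, $v = h^{-1}y$ exactly as in the proof of Theorem \ref{thm:bisection}. Assumption \ref{assum:pi and G} makes both Jacobians equal to $1$, while the $G$-invariance of $\pi$ gives $\pi(g^{-1}u) = \pi(u)$, yielding
\[
\int \pi(x) P_{da}(x,y) \log \frac{P_{da}(x,y)}{M(x,y)} \mathfrak{m}(dx) \mathfrak{m}(dy) = \int \pi(u) P(u,v) \mathbb{E}_{(g,h) \sim \nu}\!\left[\log \frac{P_{da}(g^{-1}u, hv)}{M(g^{-1}u, hv)}\right] \mathfrak{m}(du) \mathfrak{m}(dv).
\]

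The decisive step is to use the hypothesis $M, P_{da} \in \mathcal{D}(G,\nu)$ together with Proposition \ref{prop:Abelian} to conclude that $P_{da}(g^{-1}u, hv)/M(g^{-1}u, hv) = P_{da}(u,v)/M(u,v)$ pointwise for every $(g,h)$ in the support of $\nu$. This pointwise statement is transparent in the canonical cases: $\overline{P} \in \mathcal{L}(G,G^{-1})$ yields $\overline{P}(au, av) = \overline{P}(u,v)$ for every $a \in G$ (specializing to $a = g^{-1}$ when $h = g^{-1}$), and likewise for any $M \in \mathcal{L}(G,G^{-1})$; the two-sided invariance $(P_{la})_{ra} \in \mathcal{LI}(G) \cap \mathcal{RI}(G)$ covers the independent product $\nu = \mu \otimes \mu$; and the Abelian case for $\widetilde{P}$ is analogous. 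Once the ratio is pulled out of the expectation, the display collapses to $\int \pi(u) P(u,v) \log(P_{da}(u,v)/M(u,v)) \mathfrak{m}(du) \mathfrak{m}(dv)$, yielding the desired orthogonality and hence the Pythagorean identity.

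For the final assertion with $M = \Pi$, I would first verify that $\Pi \in \mathcal{D}(G,\nu)$: since $\Pi$ has transition density $\Pi(x,y) = \pi(y)$ depending only on $y$, the $G$-invariance of $\pi$ gives $(U_g \Pi U_h)(x,y) = \pi(h^{-1}y) = \pi(y) = \Pi(x,y)$ for every $g, h \in G$, so $\Pi$ is pointwise invariant and in particular fixed by the $\nu$-averaging. The Pythagorean identity with $M = \Pi$, combined with $D^\pi_{KL}(P \| P_{da}) \geq 0$, then yields $D^\pi_{KL}(P \| \Pi) \geq D^\pi_{KL}(P_{da} \| \Pi)$. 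The main obstacle I anticipate is precisely the passage from the averaged fixed-point condition $P_{da} = \mathbb{E}_\nu[U_g P_{da} U_h]$ to the pointwise invariance needed to pass the expectation through the logarithm; this step would fail for an arbitrary probability measure $\nu$ on $G \times G$ (Jensen's inequality being the obstruction), but Proposition \ref{prop:Abelian} supplies the required pointwise invariance in all canonical instantiations that the theorem targets.
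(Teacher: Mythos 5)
Your proof is correct and follows essentially the same route as the paper's: both reduce the identity to the vanishing of the cross term $\int \pi(x)\,(P(x,y) - P_{da}(x,y))\log\left(P_{da}(x,y)/M(x,y)\right)$ and dispose of it via the unit-Jacobian change of variables from Assumption \ref{assum:pi and G}, the $G$-invariance of $\pi$, and the invariance of $P_{da}$ and $M$. The caveat you raise --- that membership in $\mathcal{D}(G,\nu)$ is only an averaged fixed-point condition while the argument needs pointwise invariance $U_g M U_h = M$ for $\nu$-a.e.\ $(g,h)$ --- applies equally to the paper's own proof (whose first equality already replaces $U_g M U_h$ by $M$ inside the $\nu$-integral), and as you correctly note it is harmless in every case the theorem is actually applied to, since Proposition \ref{prop:Abelian} supplies the pointwise invariance for $\overline{P}$, $\widetilde{P}$, $P_{la}$, $P_{ra}$ and $(P_{la})_{ra}$.
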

\begin{proof}
    Using Theorem \ref{thm:bisection} and $M \in \mathcal{D}(G,\nu)$, we see that
	\begin{align*}
		D^\pi_{KL}(P \| M) &= \int_{G \times G} D^\pi_{KL}(U_g P U_h \| M) \, \nu(dg dh) \\
		&= \int_{G \times G} D^\pi_{KL}(U_g P U_h \| P_{da}) \, \nu(dg dh) + D^\pi_{KL}(P_{da} \| M) \\
		&\quad + \int_{G \times G} \int_\mathcal{X\times \mathcal{X}} \pi(x) (P(gx,h^{-1}y)- P_{da}(x,y)) \log \left(\dfrac{P_{da}(x,y)}{M(x,y)}\right) \mathfrak{m}(dx)\mathfrak{m}(dy) \nu(dgdh) \\
		&= \int_{G \times G} D^\pi_{KL}(P \| P_{da}) \, \nu(dg dh) + D^{\pi}_{KL}(P_{da} \| M) + 0 \\
		&= D^\pi_{KL}(P \| P_{da}) + D^{\pi}_{KL}(P_{da} \| M)
	\end{align*}
	where in the third equality we make use of $P_{da} \in \mathcal{D}(G,\nu)$ and Theorem \ref{thm:bisection}, and in addition the triple integral vanishes by interchanging the order of integration. 
\end{proof}

By recalling that $\overline{P}, \widetilde{P}, P_{la}, P_{ra}, (P_{la})_{ra}$ are special cases of $P_{da}$, we arrive at the following corollary in view of Proposition \ref{prop:Abelian} and Theorem \ref{thm:pythKL}:

\begin{corollary}[Pythagorean identities under KL divergence]\label{cor:pythKL}
	Assume that $\pi,\nu,G,P,M,\mathcal{X}$ satisfy the assumptions as stated in Theorem \ref{thm:bisection}. We have
	\begin{align*}
		D^\pi_{KL}(P \| M) &= D^\pi_{KL}(P \| \overline{P}) + D^{\pi}_{KL}(\overline{P} \| M), \quad M \in \mathcal{L}(G,G^{-1}), \\
		D^\pi_{KL}(P \| M) &= D^\pi_{KL}(P \| P_{la}) + D^{\pi}_{KL}(P_{la} \| M), \quad M \in \mathcal{LI}(G), \\
		D^\pi_{KL}(P \| M) &= D^\pi_{KL}(P \| P_{ra}) + D^{\pi}_{KL}(P_{ra} \| M), \quad M \in \mathcal{RI}(G), \\
		D^\pi_{KL}(P \| M) &= D^\pi_{KL}(P \| (P_{la})_{ra}) + D^{\pi}_{KL}((P_{la})_{ra} \| M), \quad M \in \mathcal{LI}(G) \cap \mathcal{RI}(G).
	\end{align*}
	Using the last equality above and by replacing $P$ with $P_{da}$, we note that, in view of \eqref{eq:Pdalara},
	\begin{align*}
		D^\pi_{KL}(P_{da} \| M) &= D^\pi_{KL}(P_{da} \| (P_{la})_{ra}) + D^{\pi}_{KL}((P_{la})_{ra} \| M), \quad M \in \mathcal{LI}(G) \cap \mathcal{RI}(G).
	\end{align*}
	If $G$ is further assumed to be Abelian, then
	\begin{align*}
		D^\pi_{KL}(P \| M) &= D^\pi_{KL}(P \| \widetilde{P}) + D^{\pi}_{KL}(\widetilde{P} \| M), \quad M \in \mathcal{L}(G,G).
	\end{align*}
\end{corollary}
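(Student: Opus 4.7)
The plan is to derive each identity in Corollary \ref{cor:pythKL} as a direct specialization of Theorem \ref{thm:pythKL} by choosing the probability measure $\nu$ on $G\times G$ so that $P_{da}(G,\nu)$ coincides with the desired average, and then verifying the two hypotheses of Theorem \ref{thm:pythKL}: namely $M\in \mathcal{D}(G,\nu)$ and $P_{da}(G,\nu)\in \mathcal{D}(G,\nu)$.

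First I would enumerate the correct choice of $\nu$ case by case using the dictionary already established in Section \ref{sec:prelim}: $(g,g^{-1})$ with $g\sim\mu$ yields $\overline{P}$, $(g,e)$ with $g\sim\mu$ yields $P_{la}$, $(e,g)$ with $g\sim\mu$ yields $P_{ra}$, $\mu\otimes\mu$ yields $(P_{la})_{ra}$, and $(g,g)$ with $g\sim\mu$ (assuming $G$ Abelian) yields $\widetilde{P}$. For each such $\nu$, I would unpack the defining equation $P_{da}(G,\nu)=P$ of the class $\mathcal{D}(G,\nu)$ and match it against $\mathcal{L}(G,G^{-1})$, $\mathcal{LI}(G)$, $\mathcal{RI}(G)$, $\mathcal{LI}(G)\cap\mathcal{RI}(G)$, and $\mathcal{L}(G,G)$ respectively. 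For instance, with $\nu$ the law of $(g,e)$ under $\mu$, the condition $P=\mathbb{E}_{g\sim\mu}(U_gP)$ is exactly $U_hP=P$ for all $h$ (by the Haar-invariance argument already used in the proof of Proposition \ref{prop:Abelian}), hence $\mathcal{D}(G,\nu)=\mathcal{LI}(G)$. The other cases are analogous.

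Having matched the sets, I would then invoke Proposition \ref{prop:Abelian} to conclude that the averaged kernel itself lies in $\mathcal{D}(G,\nu)$: $\overline{P}\in\mathcal{L}(G,G^{-1})$, $P_{la}\in\mathcal{LI}(G)$, $P_{ra}\in\mathcal{RI}(G)$, $(P_{la})_{ra}\in\mathcal{LI}(G)\cap\mathcal{RI}(G)$, and (in the Abelian case) $\widetilde{P}\in\mathcal{L}(G,G)$. Together with the assumption on $M$, both hypotheses of Theorem \ref{thm:pythKL} are then met, and the Pythagorean identities follow at once. For the auxiliary identity in which $P$ is replaced by $P_{da}$, I would apply the $(P_{la})_{ra}$ case to the kernel $P_{da}$ in place of $P$; the key observation is \eqref{eq:Pdalara}, which gives $((P_{da})_{la})_{ra}=(P_{la})_{ra}$, so the right-hand side in Theorem \ref{thm:pythKL} collapses exactly to the desired form.

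The bulk of the work is bookkeeping: matching $\nu$, $\mathcal{D}(G,\nu)$ and the relevant invariance class in each case. The one genuine subtlety is the $\widetilde{P}$ case, where both the identification $\mathcal{D}(G,\nu)=\mathcal{L}(G,G)$ and the fact $\widetilde{P}\in\mathcal{L}(G,G)$ require commutativity: under the push-forward of $\mu$ by $g\mapsto(g,g)$, invariance means $U_hPU_h=P$ for all $h$, and the averaging identity $U_h\widetilde{P}U_h=\widetilde{P}$ relies on the change of variable $g\mapsto hg=gh$, which is only valid when $G$ is Abelian (as in the proof of Proposition \ref{prop:Abelian}). This is the only step where the hypotheses genuinely restrict the argument, and I would flag it explicitly. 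Everything else reduces to substitution into Theorem \ref{thm:pythKL}.
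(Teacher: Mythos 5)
Your proposal is correct and is exactly the derivation the paper intends: the corollary is stated "in view of Proposition \ref{prop:Abelian} and Theorem \ref{thm:pythKL}", and your case-by-case matching of $\nu$ with $\mathcal{D}(G,\nu)$, the use of Proposition \ref{prop:Abelian} to place the averaged kernel in $\mathcal{D}(G,\nu)$, and the appeal to \eqref{eq:Pdalara} for the $P_{da}$ variant fill in precisely the bookkeeping the paper leaves implicit. Your flag on the Abelian hypothesis for the $\widetilde{P}$ case is also the right (and only) point of genuine care.
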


We discuss several interesting consequences of Theorem \ref{thm:pythKL}. First, if $P \in \mathcal{S}(\pi)$ (and hence $P_{da}$ by Proposition \ref{prop:inheritance of properties}) are $\pi$-stationary, then we see that the group-induced averages $\overline{P}, \widetilde{P}, P_{la}, P_{ra}, (P_{la})_{ra}$ are at least closer to $\Pi$ than that of $P$ when measured by the KL divergence under suitable assumptions. This is similar to results presented in Section \ref{sec:improvement}, in which we can understand these inequalities as rearrangement or data-processing inequalities in this context. In view of this, it is therefore advantageous to consider these group-induced averages over the original $P$ as candidate MCMC samplers to approximately sample from $\pi$.

A natural question thus arises: among the general-double-averages $P_{da}$ and the specific cases $\overline{P}, \widetilde{P}, P_{la}, P_{ra}, (P_{la})_{ra}$, which one is the closest to $\Pi$ based upon KL divergence? By applying the Pythagorean identities in Corollary \ref{cor:pythKL}, we note that $(P_{la})_{ra}$ is the closest one:
\begin{corollary}[$(P_{la})_{ra}$ as the closest Markov kernel]\label{cor:PlaracloseKL}
	Assume that $\pi,G,P,M,\mathcal{X}$ satisfy the assumptions as stated in Theorem \ref{thm:bisection}. We have
	\begin{align*}
		D^{\pi}_{KL}(P_{da} \| \Pi) &\geq D^{\pi}_{KL}((P_{la})_{ra} \| \Pi).
	\end{align*}
	In particular,
	\begin{align*}
		D^{\pi}_{KL}(\overline{P} \| \Pi) &\geq D^{\pi}_{KL}((P_{la})_{ra} \| \Pi), \quad D^{\pi}_{KL}(\widetilde{P} \| \Pi) \geq D^{\pi}_{KL}((P_{la})_{ra} \| \Pi), \\
		D^{\pi}_{KL}(P_{la} \| \Pi) &\geq D^{\pi}_{KL}((P_{la})_{ra} \| \Pi), \quad D^{\pi}_{KL}(P_{ra} \| \Pi) \geq D^{\pi}_{KL}((P_{la})_{ra} \| \Pi).
	\end{align*}
\end{corollary}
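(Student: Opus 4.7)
The plan is to reduce the corollary to a direct application of Corollary \ref{cor:pythKL} with $M=\Pi$ and with $P$ replaced by $P_{da}$. To invoke the last Pythagorean identity in Corollary \ref{cor:pythKL}, the only hypothesis that still needs checking is that $\Pi \in \mathcal{LI}(G)\cap\mathcal{RI}(G)$. This is a short calculation that uses the $G$-invariance of $\pi$: for any $f\in L^2(\pi)$ and $g\in G$, one has $U_g\Pi[f](x) = \Pi[f](gx) = \pi(f)$, while $\Pi U_g[f](x) = \pi(U_g[f]) = \int f(gy)\,\pi(dy) = \pi(f)$, where the last equality uses Assumption \ref{assum:pi and G} together with $G$-invariance of $\pi$. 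Hence $U_g\Pi = \Pi$ and $\Pi U_g = \Pi$ for every $g\in G$, so $\Pi \in \mathcal{LI}(G)\cap\mathcal{RI}(G)$. Note also that $\Pi$ admits a transition density $\pi(y)$ w.r.t.\ $\mathfrak{m}$ independent of $x$, so the regularity hypotheses of Theorem \ref{thm:bisection} are satisfied.

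With this in hand, the second displayed identity in Corollary \ref{cor:pythKL} (which is exactly the specialization obtained by replacing $P$ by $P_{da}$ and using \eqref{eq:Pdalara} to identify $((P_{da})_{la})_{ra} = (P_{la})_{ra}$) yields
\[
D^\pi_{KL}(P_{da}\|\Pi) \;=\; D^\pi_{KL}\!\bigl(P_{da}\,\|\,(P_{la})_{ra}\bigr) \;+\; D^\pi_{KL}\!\bigl((P_{la})_{ra}\,\|\,\Pi\bigr).
\]
The main inequality $D^\pi_{KL}(P_{da}\|\Pi) \geq D^\pi_{KL}((P_{la})_{ra}\|\Pi)$ is then immediate from non-negativity of the KL divergence.

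For the four listed special cases, one simply recalls from the bullet list following \eqref{eq:Pdalara} that $\overline{P}, \widetilde{P}, P_{la}, P_{ra}$ are each instances of $P_{da}(G,\nu)$ for the appropriate choice of $\nu$ (namely $h=g^{-1}$, $h=g$, $h=e$, and $g=e$ respectively, with the free variable distributed according to $\mu$), so the general inequality specializes to each of the four displayed inequalities.

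There is no real obstacle: the entire argument is an assembly step built on top of Theorem \ref{thm:pythKL}, equation \eqref{eq:Pdalara}, and non-negativity of $D^\pi_{KL}$. The only piece that requires care is verifying the invariance $\Pi \in \mathcal{LI}(G)\cap\mathcal{RI}(G)$, which in turn is an immediate consequence of the standing assumption that $\pi$ is $G$-invariant.
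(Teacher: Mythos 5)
Your proposal is correct and follows essentially the same route as the paper: the paper derives this corollary directly from the identity $D^\pi_{KL}(P_{da}\|M)=D^\pi_{KL}(P_{da}\|(P_{la})_{ra})+D^\pi_{KL}((P_{la})_{ra}\|M)$ in Corollary \ref{cor:pythKL} with $M=\Pi$, together with non-negativity of the KL divergence and the observation that $\overline{P},\widetilde{P},P_{la},P_{ra}$ are instances of $P_{da}$. Your explicit verification that $\Pi\in\mathcal{LI}(G)\cap\mathcal{RI}(G)$ is a routine check the paper leaves implicit, but it is correct and worth spelling out.
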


Another consequence concerns the special case of $G = \mathrm{SO}(d)$ and $\mathcal{X} = \mathbb{R}^d$, in which it follows from Theorem \ref{thm:pythKL} that the unique projection of $P$ onto $\mathcal{L}(G,G^{-1})$ is given by $\overline{P}$. The set $\mathcal{L}(G,G^{-1})$ can be interpreted as the set of Markov kernels that are isotropic under $G$, and hence the KL divergence $D^\pi_{KL}(P \| \overline{P})$ can be understood as the \textbf{distance to isotropy} of $P$. 

If one further assumes that $G$ is Abelian so that $\widetilde{P} \in \mathcal{L}(G,G)$ by Proposition \ref{prop:Abelian}, a similar interpretation holds for $\widetilde{P}$ being the unique projection of $P$ onto $\mathcal{L}(G,G)$, and the KL divergence $D^\pi_{KL}(P \| \widetilde{P})$ can be interpreted as the \textbf{distance to the set of $(G,G)$-invariant Markov kernels} of $P$.

\subsection{Projections under the Hilbert-Schmidt and Frobenius norm}

Apart from the KL divergence investigated in the previous section, in this subsection we shall consider projections under the Hilbert-Schmidt (HS) norm for HS operators and the Frobenius norm in the finite state space setting. Recall that, for two HS operators $P,M$ on $L^2(\pi)$ and two matrices $P,M$, we define the HS inner product and Frobenius inner product to be respectively
\begin{align*}
	\langle P,M \rangle_{\mathrm{HS}} &:= \mathrm{Tr}(P^*M), \\
	\langle P,M \rangle_{\mathrm{F}} &:= \mathrm{Tr}(P^TM), \quad \norm{P}_{\mathrm{F}}^2 = \mathrm{Tr}(P^TP) = \sum_{x,y \in \mathcal{X}} P(x,y)^2.
\end{align*}

With these notations in mind, the main result in this subsection gives Pythagorean identities under squared-HS and squared-Frobenius norm, thereby offering natural geometric interpretations of the group-induced averages $P_{da}$.

\begin{theorem}[Pythagorean identities under squared-HS and squared-Frobenius norm]\label{thm:pythHS}
	Let $G$ be a finite group with Haar measure $\mu$ that acts on $\mathcal{X}$ and $\pi$ is assumed to be $G$-invariant. Assume that $P,M$ (and hence $P_{da}$ by Proposition \ref{prop:inheritance of properties}) are HS operators and $M,P_{da} \in \mathcal{D}(G,\nu)$, where the measure $\nu$ satisfies $(g,h) \overset{D}{=} (g^{-1},h^{-1}) \sim \nu$. We have
	\begin{align*}
		\norm{P-M}_{\mathrm{HS}}^2 &= \norm{P-P_{da}}_{\mathrm{HS}}^2 + \norm{P_{da}-M}_{\mathrm{HS}}^2.
	\end{align*}
	If $\mathcal{X}$ is finite, we also have
	\begin{align*}
		\norm{P-M}_{\mathrm{F}}^2 &= \norm{P-P_{da}}_{\mathrm{F}}^2 + \norm{P_{da}-M}_{\mathrm{F}}^2.
	\end{align*}
\end{theorem}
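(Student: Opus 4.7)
The plan is to reduce both identities to showing that the cross-term vanishes, that is, that $\langle P - P_{da}, P_{da} - M \rangle_{\mathrm{HS}} = 0$ (and analogously for the Frobenius inner product). Expanding this inner product and rearranging, it suffices to prove the two identities
\[
\langle P_{da}, M \rangle_{\mathrm{HS}} = \langle P, M \rangle_{\mathrm{HS}}, \qquad \langle P_{da}, P_{da} \rangle_{\mathrm{HS}} = \langle P, P_{da} \rangle_{\mathrm{HS}},
\]
since substituting these into the expansion of $\langle P - P_{da}, P_{da} - M \rangle_{\mathrm{HS}}$ immediately gives zero. The same reduction applies to $\norm{\cdot}_{\mathrm{F}}$.

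The key technical lemma I would establish first is the following adjoint-type identity: for any HS operators $A, B$ on $L^2(\pi)$ and any $g, h \in G$,
\[
\langle U_g A U_h, B \rangle_{\mathrm{HS}} = \langle A, U_{g^{-1}} B U_{h^{-1}} \rangle_{\mathrm{HS}}.
\]
This is a direct consequence of two facts: first, since $\pi \in \mathcal{I}(G)$, each $U_g$ is unitary on $L^2(\pi)$ with $U_g^* = U_{g^{-1}}$; and second, the cyclicity of the trace gives $\mathrm{Tr}((U_g A U_h)^* B) = \mathrm{Tr}(U_{h^{-1}} A^* U_{g^{-1}} B) = \mathrm{Tr}(A^* U_{g^{-1}} B U_{h^{-1}})$. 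Finiteness of $G$ ensures that all operators appearing are HS (by Proposition \ref{prop:inheritance of properties}) and that Fubini applies when averaging under $\nu$.

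Next, I would apply the lemma inside the expectation defining $P_{da}$. Writing $P_{da} = \mathbb{E}_{(g,h) \sim \nu}(U_g P U_h)$, interchanging expectation and the (continuous bilinear) HS inner product, and invoking the symmetry hypothesis $(g,h) \overset{D}{=} (g^{-1}, h^{-1}) \sim \nu$, I obtain, for any HS operator $B$,
\[
\langle P_{da}, B \rangle_{\mathrm{HS}} = \mathbb{E}_{(g,h) \sim \nu}\langle P, U_{g^{-1}} B U_{h^{-1}} \rangle_{\mathrm{HS}} = \mathbb{E}_{(g,h) \sim \nu}\langle P, U_g B U_h \rangle_{\mathrm{HS}} = \langle P, B_{da} \rangle_{\mathrm{HS}}.
\]
Specializing to $B = M$ and using $M \in \mathcal{D}(G,\nu)$ so that $M_{da} = M$ yields the first required identity. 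Specializing to $B = P_{da}$ and using $P_{da} \in \mathcal{D}(G,\nu)$ so that $(P_{da})_{da} = P_{da}$ yields the second. Combining, the cross term vanishes and the HS Pythagorean identity follows.

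For the Frobenius case with $\mathcal{X}$ finite, I would run exactly the same argument, noting that on finite state space the $U_g$ are permutation matrices, hence orthogonal with $U_g^{\mathsf T} = U_{g^{-1}}$, so the analogue of the key lemma holds for $\langle \cdot, \cdot \rangle_{\mathrm{F}}$ by the same trace-cyclicity calculation; the rest goes through verbatim. I do not anticipate a serious obstacle: the only delicate point is bookkeeping the adjoint/transpose with the correct inverses and making sure the symmetry of $\nu$ is used as $(g,h) \overset{D}{=} (g^{-1}, h^{-1})$ (rather than the swap-and-invert symmetry used in Proposition \ref{prop:inheritance of properties} item \eqref{it:inherit2}), which is precisely what is assumed in the theorem.
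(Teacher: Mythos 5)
Your proposal is correct and follows essentially the same route as the paper: both reduce the identity to the vanishing of the cross term and kill it via $U_g^* = U_{g^{-1}}$ (resp.\ $U_g^{\mathsf T} = U_{g^{-1}}$), cyclicity of the trace, the symmetry $(g,h) \overset{D}{=} (g^{-1},h^{-1}) \sim \nu$, and the invariances $M_{da} = M$, $(P_{da})_{da} = P_{da}$. The only difference is organizational --- you package the mechanism as the adjoint identity $\langle P_{da}, B \rangle_{\mathrm{HS}} = \langle P, B_{da} \rangle_{\mathrm{HS}}$ and specialize $B$ twice, whereas the paper averages the single trace $\mathrm{Tr}((U_g P U_h - U_g P_{da} U_h)^*(P_{da}-M))$ over $\nu$ in one pass --- which does not change the substance of the argument.
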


\begin{proof}
	First, we decompose
	\begin{align*}
		\norm{P-M}_{\mathrm{HS}}^2 &= \mathrm{Tr}((P-P_{da}+P_{da}-M)^*(P-P_{da}+P_{da}-M)) \\
		&= \norm{P-P_{da}}_{\mathrm{HS}}^2 + \norm{P_{da}-M}_{\mathrm{HS}}^2 + 2 \mathrm{Tr}((P-P_{da})^*(P_{da}-M)),
	\end{align*}
	and hence it suffices to show that the trace of the rightmost term is zero. Using that $U_g^* = U_g^{-1}$ and the cyclic property of trace, we consider, for $g,h \in G$,
	\begin{align*}
	\mathrm{Tr}((U_g P U_h- U_g P_{da} U_h)^*(P_{da}-M)) 
		&= \mathrm{Tr}((P-P_{da})^*U_{g^{-1}}(P_{da}-M)U_{h^{-1}}) \\
		&= 	\mathrm{Tr}((P-P_{da})^*(U_{g^{-1}}P_{da}U_{h^{-1}} - U_{g^{-1}}MU_{h^{-1}})).
	\end{align*} 
	Summing over the Haar measure $\mu$ as $G$ is finite and by the linearity of the trace, it leads to 
	\begin{align*}
		0 = \mathrm{Tr}((P_{da}-P_{da})^*(\overline{P}-M))
		&= \mathrm{Tr}((P-P_{da})^*(P_{da}-M)),
	\end{align*} 
	as desired, where we make use of $(g,h) \overset{D}{=} (g^{-1},h^{-1}) \sim \nu$  and $P_{da},M \in \mathcal{D}(G,\nu)$.
	
	We proceed to consider the finite state space case, which is similar to the considerations above, except we now consider transpose instead of adjoint. Precisely, we note that
	\begin{align*}
		\norm{P-M}_{\mathrm{F}}^2 &= \mathrm{Tr}((P-P_{da}+P_{da}-M)^T(P-P_{da}+P_{da}-M)) \\
		&= \norm{P-P_{da}}_{\mathrm{F}}^2 + \norm{P_{da}-M}_{\mathrm{F}}^2 + 2 \mathrm{Tr}((P-P_{da})^T(P_{da}-M)),
	\end{align*}
	and hence it suffices to show that the trace of the rightmost term is zero. Using that $U_g^T = U_g^{-1}$ and the cyclic property of trace, we consider, for $g,h \in G$,
	\begin{align*}
		\mathrm{Tr}((U_g P U_h- U_g P_{da} U_h)^T(P_{da}-M)) 
		&= \mathrm{Tr}((P-P_{da})^TU_{g^{-1}}(P_{da}-M)U_{h^{-1}}) \\
		&= 	\mathrm{Tr}((P-P_{da})^T(U_{g^{-1}}P_{da}U_{h^{-1}} - U_{g^{-1}}MU_{h^{-1}})).
	\end{align*} 
	Summing over the Haar measure $\mu$ as $G$ is finite and by the linearity of the trace, it leads to 
	\begin{align*}
		0 = \mathrm{Tr}((P_{da}-P_{da})^T(\overline{P}-M))
		&= \mathrm{Tr}((P-P_{da})^T(P_{da}-M)),
	\end{align*} 
	as desired, where we make use of $(g,h) \overset{D}{=} (g^{-1},h^{-1}) \sim \nu$ and $P_{da},M \in \mathcal{D}(G,\nu)$.
\end{proof}

By recalling that $\overline{P}, \widetilde{P}, P_{la}, P_{ra}, (P_{la})_{ra}$ are special cases of $P_{da}$ and noting that $(g,h) \overset{D}{=} (g^{-1},h^{-1}) \sim \nu$ in these averages, we apply Theorem \ref{thm:pythHS} to obtain the following two corollaries. This is analogous to Corollary \ref{cor:pythKL} and \ref{cor:PlaracloseKL}, and demonstrates that $(P_{la})_{ra}$ is the closest to $\Pi$ among these averages under HS and Frobenius norm.

\begin{corollary}[Pythagorean identities under squared-HS and squared-Frobenius norm]\label{cor:pythHS}
	Assume that $\pi,\nu,G,P,M,\mathcal{X}$ satisfy the assumptions as stated in Theorem \ref{thm:pythHS}. We have
	\begin{align*}
		\norm{P-M}_{\mathrm{HS}}^2 &= \norm{P-\overline{P}}_{\mathrm{HS}}^2 + \norm{\overline{P}-M}_{\mathrm{HS}}^2, \quad M \in \mathcal{L}(G,G^{-1}), \\
		\norm{P-M}_{\mathrm{HS}}^2 &= \norm{P-\widetilde{P}}_{\mathrm{HS}}^2 + \norm{\widetilde{P}-M}_{\mathrm{HS}}^2, \quad M \in \mathcal{L}(G,G), \\
		\norm{P-M}_{\mathrm{HS}}^2 &= \norm{P-P_{la}}_{\mathrm{HS}}^2 + \norm{P_{la}-M}_{\mathrm{HS}}^2, \quad M \in \mathcal{LI}(G), \\
		\norm{P-M}_{\mathrm{HS}}^2 &= \norm{P-P_{ra}}_{\mathrm{HS}}^2 + \norm{P_{ra}-M}_{\mathrm{HS}}^2, \quad M \in \mathcal{RI}(G), \\
		\norm{P-M}_{\mathrm{HS}}^2 &= \norm{P-(P_{la})_{ra}}_{\mathrm{HS}}^2 + \norm{(P_{la})_{ra}-M}_{\mathrm{HS}}^2, \quad M \in \mathcal{LI}(G) \cap \mathcal{RI}(G), \\
		\norm{P_{da}-M}_{\mathrm{HS}}^2 &= \norm{P_{da}-(P_{la})_{ra}}_{\mathrm{HS}}^2 + \norm{(P_{la})_{ra}-M}_{\mathrm{HS}}^2, \quad M \in \mathcal{LI}(G) \cap \mathcal{RI}(G).
	\end{align*}
	If $\mathcal{X}$ is finite, then we also have
	\begin{align*}
		\norm{P-M}_{F}^2 &= \norm{P-\overline{P}}_{F}^2 + \norm{\overline{P}-M}_{F}^2, \quad M \in \mathcal{L}(G,G^{-1}), \\
		\norm{P-M}_{F}^2 &= \norm{P-\widetilde{P}}_{F}^2 + \norm{\widetilde{P}-M}_{F}^2, \quad M \in \mathcal{L}(G,G), \\
		\norm{P-M}_{F}^2 &= \norm{P-P_{la}}_{F}^2 + \norm{P_{la}-M}_{F}^2, \quad M \in \mathcal{LI}(G), \\
		\norm{P-M}_{F}^2 &= \norm{P-P_{ra}}_{F}^2 + \norm{P_{ra}-M}_{F}^2, \quad M \in \mathcal{RI}(G), \\
		\norm{P-M}_{F}^2 &= \norm{P-(P_{la})_{ra}}_{F}^2 + \norm{(P_{la})_{ra}-M}_{F}^2, \quad M \in \mathcal{LI}(G) \cap \mathcal{RI}(G), \\
		\norm{P_{da}-M}_{F}^2 &= \norm{P_{da}-(P_{la})_{ra}}_{F}^2 + \norm{(P_{la})_{ra}-M}_{F}^2, \quad M \in \mathcal{LI}(G) \cap \mathcal{RI}(G).
	\end{align*}
\end{corollary}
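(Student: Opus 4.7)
The plan is to deduce Corollary \ref{cor:pythHS} directly from Theorem \ref{thm:pythHS} by recognizing each of the five averages as an instance of $P_{da}(G,\nu)$ for an appropriate choice of the probability measure $\nu$ on $G \times G$, and then checking that all hypotheses of Theorem \ref{thm:pythHS} are satisfied for that $\nu$ in each row of the corollary.

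First I would identify the relevant $\nu$ for each case. Namely, $\overline{P}$ corresponds to $\nu$ being the pushforward of $\mu$ under $g \mapsto (g,g^{-1})$; $\widetilde{P}$ to the pushforward of $\mu$ under $g \mapsto (g,g)$; $P_{la}$ to $\mu \otimes \delta_e$; $P_{ra}$ to $\delta_e \otimes \mu$; and $(P_{la})_{ra}$ to $\mu \otimes \mu$. For each of these, the symmetry $(g,h) \overset{D}{=} (g^{-1},h^{-1})$ follows either from the Haar invariance $g \overset{D}{=} g^{-1}$ under $\mu$ (note that for a \emph{finite} group this reduces to invariance of counting/normalized counting measure under inversion, which is automatic), or from independence of the two coordinates combined with that invariance. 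Next, I would verify $M \in \mathcal{D}(G,\nu)$ in each of the listed cases: e.g. for $\overline{P}$, if $M \in \mathcal{L}(G,G^{-1})$ then $U_g M U_{g^{-1}} = M$ for every $g$, so averaging over $\mu$ gives $M_{da}(G,\nu) = M$; the other cases are analogous, and for $(P_{la})_{ra}$ one uses $M \in \mathcal{LI}(G) \cap \mathcal{RI}(G)$ to get $U_g M U_h = M$ for all $g,h$. Finally, $P_{da} \in \mathcal{D}(G,\nu)$ in each case follows from Proposition \ref{prop:Abelian}, noting that membership of the averaged kernel in the respective invariance set implies $\nu$-invariance by the same argument just used for $M$; for $\widetilde{P}$ this uses the Abelian assumption.

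With these checks in place, each of the five HS identities is an immediate instantiation of Theorem \ref{thm:pythHS}. The sixth identity, with $P_{da}$ in place of $P$, is obtained by first noting via \eqref{eq:Pdalara} that $((P_{da})_{la})_{ra} = (P_{la})_{ra}$, and then applying the fifth identity to $P_{da}$ rather than $P$: the hypotheses on the left-hand side $P$ of Theorem \ref{thm:pythHS} only require that $P$ be an HS operator, which holds for $P_{da}$ by Proposition \ref{prop:inheritance of properties}\eqref{it:inherit4} (this is where the assumption that $G$ is finite is used to ensure $P_{da}$, not only $\overline{P}, P_{la}, P_{ra}, (P_{la})_{ra}$, is HS). The finite–state Frobenius identities follow identically from the second half of Theorem \ref{thm:pythHS}, using the transpose variant of the same argument.

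The only subtlety — what I would flag as the one place requiring care rather than pure bookkeeping — is ensuring that the specific $\nu$ chosen for $\widetilde{P}$ and $\overline{P}$ (both supported on the ``diagonal'' $\{(g,g)\}$ or ``anti-diagonal'' $\{(g,g^{-1})\}$) actually satisfies $(g,h) \overset{D}{=} (g^{-1},h^{-1})$: this reduces to the single statement $g \overset{D}{=} g^{-1}$ under $\mu$, which is standard for finite groups (normalized counting measure is invariant under inversion). Everything else is direct substitution into Theorem \ref{thm:pythHS} together with the containments from Proposition \ref{prop:Abelian} and the HS inheritance from Proposition \ref{prop:inheritance of properties}.
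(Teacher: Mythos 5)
Your proposal is correct and follows exactly the route the paper intends: the paper presents the corollary as an immediate instantiation of Theorem \ref{thm:pythHS} after observing that $\overline{P},\widetilde{P},P_{la},P_{ra},(P_{la})_{ra}$ are $P_{da}(G,\nu)$ for appropriate $\nu$ and that $(g,h)\overset{D}{=}(g^{-1},h^{-1})$ holds in each case, which is precisely the bookkeeping you carry out (including the inversion-invariance of Haar on a finite group, the use of Proposition \ref{prop:Abelian} to get $P_{da}\in\mathcal{D}(G,\nu)$, the Abelian requirement for $\widetilde{P}$, and the reduction of the sixth identity to the fifth via \eqref{eq:Pdalara} with $P_{da}$ being HS by Proposition \ref{prop:inheritance of properties}).
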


\begin{corollary}[$(P_{la})_{ra}$ as the closest Markov kernel]
	Assume that $\pi,\nu,G,P,\mathcal{X}$ satisfy the assumptions as stated in Theorem \ref{thm:pythHS}. We have
	\begin{align*}
		\norm{P_{da}-\Pi}_{\mathrm{HS}} &\geq \norm{(P_{la})_{ra}-\Pi}_{\mathrm{HS}}.
	\end{align*}
	In particular,
	\begin{align*}
		\norm{\overline{P}-\Pi}_{\mathrm{HS}} &\geq \norm{(P_{la})_{ra}-\Pi}_{\mathrm{HS}}, \quad \norm{\widetilde{P}-\Pi}_{\mathrm{HS}} \geq \norm{(P_{la})_{ra}-\Pi}_{\mathrm{HS}}, \\
		\norm{P_{la}-\Pi}_{\mathrm{HS}} &\geq \norm{(P_{la})_{ra}-\Pi}_{\mathrm{HS}}, \quad \norm{P_{ra}-\Pi}_{\mathrm{HS}} \geq \norm{(P_{la})_{ra}-\Pi}_{\mathrm{HS}}. 
	\end{align*}
	If $\mathcal{X}$ is finite, then we also have
	\begin{align*}
		\norm{P_{da}-\Pi}_{F} &\geq \norm{(P_{la})_{ra}-\Pi}_{F}.
	\end{align*}
	In particular,
	\begin{align*}
		\norm{\overline{P}-\Pi}_{F} &\geq \norm{(P_{la})_{ra}-\Pi}_{F}, \quad \norm{\widetilde{P}-\Pi}_{F} \geq \norm{(P_{la})_{ra}-\Pi}_{F}, \\
		\norm{P_{la}-\Pi}_{F} &\geq \norm{(P_{la})_{ra}-\Pi}_{F}, \quad \norm{P_{ra}-\Pi}_{F} \geq \norm{(P_{la})_{ra}-\Pi}_{F}. 
	\end{align*}
\end{corollary}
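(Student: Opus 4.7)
The plan is to observe that this corollary follows almost immediately from the final Pythagorean identity established in Corollary \ref{cor:pythHS}, once we verify that the rank-one projection $\Pi$ lies in $\mathcal{LI}(G) \cap \mathcal{RI}(G)$. Specifically, since $U_g[\mathbf{1}] = \mathbf{1}$, we have $U_g \Pi[f] = U_g(\pi(f)\mathbf{1}) = \pi(f)\mathbf{1} = \Pi[f]$, so $\Pi \in \mathcal{LI}(G)$. For the right-invariance, $G$-invariance of $\pi$ yields $\Pi U_g[f] = \pi(U_g[f]) \mathbf{1} = \int f(gy)\,\pi(dy)\,\mathbf{1} = \pi(f)\mathbf{1} = \Pi[f]$, giving $\Pi \in \mathcal{RI}(G)$. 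Thus $\Pi$ is a legitimate choice for $M$ in the final equation of Corollary \ref{cor:pythHS}.

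Next, I would simply substitute $M = \Pi$ into the identity
\begin{align*}
    \norm{P_{da}-M}_{\mathrm{HS}}^2 &= \norm{P_{da}-(P_{la})_{ra}}_{\mathrm{HS}}^2 + \norm{(P_{la})_{ra}-M}_{\mathrm{HS}}^2,
\end{align*}
to obtain
\begin{align*}
    \norm{P_{da}-\Pi}_{\mathrm{HS}}^2 = \norm{P_{da}-(P_{la})_{ra}}_{\mathrm{HS}}^2 + \norm{(P_{la})_{ra}-\Pi}_{\mathrm{HS}}^2 \geq \norm{(P_{la})_{ra}-\Pi}_{\mathrm{HS}}^2,
\end{align*}
which is the main HS inequality. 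The special cases involving $\overline P$, $\widetilde P$, $P_{la}$, $P_{ra}$ then follow immediately by recognizing each as a particular instance of $P_{da}$ for an appropriate symmetric measure $\nu$ on $G \times G$ (namely the ones listed after the definition of $P_{da}$), each of which satisfies the required $(g,h) \overset{D}{=} (g^{-1},h^{-1})$ symmetry needed in Corollary \ref{cor:pythHS}.

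The Frobenius case in the finite setting is entirely parallel: I would invoke the corresponding last line of Corollary \ref{cor:pythHS} for the Frobenius norm, again with $M = \Pi$, whose matrix entries are $\Pi(x,y) = \pi(y)$ and which plainly lies in $\mathcal{LI}(G) \cap \mathcal{RI}(G)$ by the same calculation as above (now carried out entry-wise using $\pi(gy) = \pi(y)$). No extra ingredients are needed.

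There is really no substantive obstacle here; the one small bookkeeping item is confirming that each named special case genuinely arises as a $P_{da}(G,\nu)$ with $\nu$ symmetric under $(g,h) \mapsto (g^{-1},h^{-1})$, so that Corollary \ref{cor:pythHS} applies to it. That check amounts to inspecting the list of measures $\nu$ given right before Proposition \ref{prop:Abelian} and using unimodularity of the Haar measure (i.e.\ $\mu$ is inversion-invariant on a compact or Abelian group, and for $P_{la}, P_{ra}, (P_{la})_{ra}$ the relevant symmetry is immediate from the product and delta-type structure of $\nu$). Once this is noted, the proof reduces to a one-line application of the Pythagorean identity and discarding the nonnegative term $\norm{P_{da}-(P_{la})_{ra}}_{\mathrm{HS}}^2$ (respectively $\norm{\cdot}_{\mathrm{F}}^2$).
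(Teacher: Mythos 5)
Your proof is correct and follows the paper's intended route exactly: specialize $M = \Pi$ in the last Pythagorean identity of Corollary~\ref{cor:pythHS}, verify $\Pi \in \mathcal{LI}(G)\cap\mathcal{RI}(G)$ via $U_g\mathbf{1}=\mathbf{1}$ and $G$-invariance of $\pi$, and drop the nonnegative cross term; the named special cases are handled by noting each has a $\nu$ with the required inversion symmetry. The paper offers no separate proof for this corollary beyond the remark preceding it, and your argument fills in exactly the bookkeeping it gestures at.
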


\section{Mixing time comparison between $P_{la}, P_{ra}, (P_{la})_{ra}$}\label{sec:comparison}

From Section \ref{sec:improvement} and \ref{sec:geometry}, $(P_{la})_{ra}$ can be understood as the optimal chain among all double-averages, from both spectral and geometrical perspectives. In this section, we show that the mixing times of $P_{la}$ and $P_{ra}$ are nearly identical to that of $(P_{la})_{ra}$. This equivalence allows us to adopt $P_{la}$ and $P_{ra}$ in practice, achieving comparable mixing times at reduced computational cost. For instance, if we compare the Markov kernels $(P_{la})_{ra}$ and $P_{la}$, at each iteration the former needs to conduct both left and right averaging while only left averaging is needed for the latter case. In this sense, $P_{la}$ or $P_{ra}$ has a reduced computational cost per iteration when compared with $(P_{la})_{ra}$.

We shall use the $L^p$ distance to quantify the mixing times, which is defined as follows. For $1\leq p< \infty$, let $\|f\|_{p,\pi}:=\left(\int |f|^p d\pi\right)^{1/p}$ be the $L^p$ norm of $f$ under $\pi$, and define $\|f\|_{\infty,\pi}:=\lim_{p\to \infty} \|f\|_{p,\pi}$. For a Markov kernel $P$ on state space $\mathcal{X}$ with stationary distribution $\pi$, its worst-case $L^p$ distance to $\pi$ at time $t\in \mathbb N$ is defined as
\begin{equation*}
	d_p(P, t):=\pi\text{-}\mathop{\esssup}\limits_{x\in\mathcal{X}}\left\|\frac{d P^t(x,\cdot)}{d \pi}-1\right\|_{p,\pi}, \quad 1\leq p\leq \infty,
\end{equation*}
and the corresponding mixing time is 
\begin{equation*}
    t_{\mathrm{mix},p}(P,\varepsilon):=\inf\left\{t\in \mathbb N: d_p(P,t)\leq \varepsilon\right\}, \quad 1\leq p\leq \infty, \enspace \varepsilon>0.
\end{equation*}
For $p=1$, it covers the classical worst-case total variation (TV) mixing time up to a universal constant. If $P^t(x,\cdot)$ is not absolutely continuous w.r.t. $\pi$, set $d_1(P,t)=2$ and $d_p(P,t)=\infty$ for $p>1$, and their corresponding mixing times are set to be $\infty$.

A useful characterization of $L^p$ mixing times is via operator norm, i.e. \citep{chen2008cutoff}: if $P(x,\cdot)$ admits a density w.r.t. $\pi$, then
\begin{equation}\label{eq:operator norm for mixing time}
    d_p(P,t)=\left\|P^t-\Pi\right\|_{L^q\to L^\infty},\quad \frac{1}{p}+\frac{1}{q}=1.
\end{equation}
Then, the main result of this section is presented as follows.
\begin{theorem}\label{thm:mixing time equivalence within QP,PQ,QPQ}
    Let $P\in \mathcal{S}(\pi)$, $G$ be a finite group acting on $\mathcal{X}$, and assume that $\pi$ is $G$-invariant. For any $1\leq p\leq \infty$ and $\varepsilon>0$, we have
    \begin{align}
        t_{\mathrm{mix},p}((P_{la})_{ra}, 2\varepsilon)&\leq t_{\mathrm{mix},p}(P_{la}, \varepsilon)\leq t_{\mathrm{mix},p}\left((P_{la})_{ra}, \frac{\varepsilon}{2}\right)+1,\label{eq:mixing time Pla and QPQ}\\
        t_{\mathrm{mix},p}((P_{la})_{ra}, 2\varepsilon)&\leq t_{\mathrm{mix},p}(P_{ra}, \varepsilon)\leq t_{\mathrm{mix},p}\left((P_{la})_{ra}, \frac{\varepsilon}{2}\right)+1.\label{eq:mixing time Pra and QPQ}
    \end{align}
\end{theorem}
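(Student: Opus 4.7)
The plan is to use the operator-norm characterization \eqref{eq:operator norm for mixing time}, namely $d_p(M, t) = \|M^t - \Pi\|_{L^q \to L^\infty}$ with $1/p + 1/q = 1$, and translate the four mixing-time bounds into operator-norm estimates on $(QP)^t$, $(PQ)^t$ and $(QPQ)^t$, where $Q := \mathbb{E}_{g \sim \mu}(U_g) = \mathbf{P}_V$. Recall from \eqref{eq:QP, PQ, QPQ} that $P_{la} = QP$, $P_{ra} = PQ$, and $(P_{la})_{ra} = QPQ$; moreover $Q^2 = Q$ (idempotency of the projection onto $V$) and $\pi$ is stationary for both $P$ and $Q$. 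The latter yields $P\Pi = \Pi P = Q\Pi = \Pi Q = \Pi$ and the contraction bounds $\|P\|_{L^r \to L^r}, \|Q\|_{L^r \to L^r} \leq 1$ for both $r = q$ and $r = \infty$, via Jensen's inequality combined with stationarity.

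First I would establish the algebraic identities
\begin{align*}
(QPQ)^t &= (QP)^t Q = Q (PQ)^t, \\
(QP)^{t+1} &= (QPQ)^t P, \qquad (PQ)^{t+1} = P (QPQ)^t,
\end{align*}
for all $t \geq 1$ by a short induction using $Q^2 = Q$. Subtracting $\Pi$ and invoking $P\Pi = \Pi P = Q\Pi = \Pi Q = \Pi$ then transforms these into
\begin{align*}
(QPQ)^t - \Pi &= [(QP)^t - \Pi]\,Q = Q\,[(PQ)^t - \Pi], \\
(QP)^{t+1} - \Pi &= [(QPQ)^t - \Pi]\,P, \\
(PQ)^{t+1} - \Pi &= P\,[(QPQ)^t - \Pi].
\end{align*}

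From $(QPQ)^t - \Pi = [(QP)^t - \Pi]Q$ together with $\|Q\|_{L^q \to L^q} \leq 1$ I obtain $d_p((P_{la})_{ra}, t) \leq d_p(P_{la}, t)$, which directly yields the lower inequality $t_{\mathrm{mix},p}((P_{la})_{ra}, 2\varepsilon) \leq t_{\mathrm{mix},p}(P_{la}, \varepsilon)$ in \eqref{eq:mixing time Pla and QPQ}. From $(QP)^{t+1} - \Pi = [(QPQ)^t - \Pi]P$ and $\|P\|_{L^q \to L^q} \leq 1$ I obtain $d_p(P_{la}, t+1) \leq d_p((P_{la})_{ra}, t)$, which yields the upper inequality $t_{\mathrm{mix},p}(P_{la}, \varepsilon) \leq t_{\mathrm{mix},p}((P_{la})_{ra}, \varepsilon/2) + 1$. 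The bounds \eqref{eq:mixing time Pra and QPQ} for $P_{ra}$ follow in exactly the same way from $(QPQ)^t - \Pi = Q[(PQ)^t - \Pi]$ and $(PQ)^{t+1} - \Pi = P[(QPQ)^t - \Pi]$, except that left-multiplication by $Q$ or $P$ on a kernel mapping into $L^\infty$ forces the use of $\|Q\|_{L^\infty \to L^\infty}, \|P\|_{L^\infty \to L^\infty} \leq 1$ in place of their $L^q \to L^q$ counterparts.

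The main care point, and where I would be most vigilant, is orienting each operator-norm inequality correctly: a factor appearing on the right of $M^t - \Pi$ is absorbed via its $L^q \to L^q$ norm, while one appearing on the left is absorbed via its $L^\infty \to L^\infty$ norm, so that the overall estimate stays within the $L^q \to L^\infty$ norm representing $d_p$. The edge case where $M^t(x, \cdot) \not\ll \pi$ is handled by the convention $d_p = \infty$, under which the bounds hold trivially. In fact no factor-of-two slack is needed for the argument; the stated inequalities follow \emph{a fortiori} from the sharper bounds $d_p((P_{la})_{ra}, t) \leq d_p(P_{la}, t)$ and $d_p(P_{la}, t+1) \leq d_p((P_{la})_{ra}, t)$, and likewise for $P_{ra}$.
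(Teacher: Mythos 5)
Your proof is correct and follows essentially the same route as the paper: the operator-norm characterization \eqref{eq:operator norm for mixing time} combined with the factorizations $(QPQ)^t=(QP)^tQ=Q(PQ)^t$ and $(QP)^{t+1}=(QPQ)^tP$, $(PQ)^{t+1}=P(QPQ)^t$ coming from $Q^2=Q$; the only difference is that you absorb the extra factors $Q$ and $P$ directly with $\|Q\|_{L^r\to L^r},\|P\|_{L^r\to L^r}\le 1$, whereas the paper writes them as $Q-\Pi$ and $P-\Pi$ and pays a factor of $2$, so your bounds $d_p((P_{la})_{ra},t)\le d_p(P_{la},t)$ and $d_p(P_{la},t+1)\le d_p((P_{la})_{ra},t)$ are in fact sharper than needed. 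One small imprecision: the non-absolutely-continuous case is not entirely ``trivial'' --- you must check that absolute continuity w.r.t.\ $\pi$ fails for $QP$, $PQ$ and $QPQ$ \emph{simultaneously} whenever it fails for $P$ (e.g.\ via $QP(x,A)\ge |G|^{-1}P(x,A)$), since otherwise one side of an inequality could be finite while the other is infinite; the paper supplies this short argument explicitly.
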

\begin{proof}
    If $P$ is not absolutely continuous to $\pi$ at some point $x$, then $QP$, $PQ$ and $QPQ$ are likewise not, in which case the mixing times are all $\infty$. Suppose $\pi(A)=0$ and $P(x,A)>0$ for some set $A$, then the claim is given by 
    \begin{align*}
        QP(x,A)&\geq Q(x,x)P(x,A)=|G|^{-1}\cdot P(x,A)>0, \\
        PQ(x,A)&\geq P(x,A)\cdot|G|^{-1}>0,\\
        QPQ(x,A)&\geq Q(x,x)QP(x,A)>0.
    \end{align*}
    It suffices to consider the case that $P(x,\cdot)$ has a density w.r.t. $\pi$, thus three kernels above all admit a density. Following the notations in \eqref{eq:QP, PQ, QPQ}, for the left-hand-side in \eqref{eq:mixing time Pla and QPQ}, recalling \eqref{eq:operator norm for mixing time}, we have 
    \begin{align*}
        \left\|(QPQ)^t-\Pi\right\|_{L^q\to L^\infty}&=\left\|\left((QP)^t-\Pi\right)(Q-\Pi)\right\|_{L^q\to L^\infty}\\
        &\leq \left\|(QP)^t-\Pi\right\|_{L^q\to L^\infty}\cdot\left\|Q-\Pi\right\|_{L^q\to L^q}\\
        &\leq 2\left\|(QP)^t-\Pi\right\|_{L^q\to L^\infty},
    \end{align*}
    where in the last inequality, we have used the well-known fact that for any Markov operator $K$, 
    \begin{equation*}
        \left\|K\right\|_{L^1\to L^1}\leq 1, \quad \left\|K\right\|_{L^\infty\to L^\infty}\leq 1, 
    \end{equation*}
    and by Riesz-Thorin Interpolation Theorem \citep{stein2011functional}, if $1<q<\infty$, 
    \begin{equation*}
        \left\|K\right\|_{L^q\to L^q}\leq \left\|K\right\|_{L^1\to L^1}^{1/q}\cdot \left\|K\right\|_{L^\infty\to L^\infty}^{1-1/q}\leq 1.
    \end{equation*}
    As a result, we arrive at
    \begin{equation*}
        \left\|Q-\Pi\right\|_{L^q\to L^q}\leq \left\|Q\right\|_{L^q\to L^q}+\left\|\Pi\right\|_{L^q\to L^q}\leq 2.
    \end{equation*}
    For the right-hand-side of \eqref{eq:mixing time Pla and QPQ}, we similarly have 
    \begin{align*}
        \left\|(QP)^t-\Pi\right\|_{L^q\to L^\infty}&=\left\|\left((QPQ)^{t-1}-\Pi\right)(P-\Pi)\right\|_{L^q\to L^\infty}\\
        &\leq \left\|(QPQ)^{t-1}-\Pi\right\|_{L^q\to L^\infty}\cdot\left\|P-\Pi\right\|_{L^q\to L^q}\\
        &\leq 2\left\|(QPQ)^{t-1}-\Pi\right\|_{L^q\to L^\infty}.
    \end{align*}
    
    The proof for \eqref{eq:mixing time Pra and QPQ} is similar. Precisely, we see that 
    \begin{align*}
        \left\|(QPQ)^t-\Pi\right\|_{L^q\to L^\infty}&=\left\|(Q-\Pi)\left((PQ)^t-\Pi\right)\right\|_{L^q\to L^\infty}\\
        &\leq \left\|Q-\Pi\right\|_{L^\infty\to L^\infty}\cdot\left\|(PQ)^t-\Pi\right\|_{L^q\to L^\infty}\\
        &\leq 2\left\|(PQ)^t-\Pi\right\|_{L^q\to L^\infty},
    \end{align*}
    and 
    \begin{align*}
        \left\|(PQ)^t-\Pi\right\|_{L^q\to L^\infty}&=\left\|\left((P-\Pi)(QPQ)^{t-1}-\Pi\right)\right\|_{L^q\to L^\infty}\\
        &\leq \left\|P-\Pi\right\|_{L^\infty\to L^\infty}\left\|(QPQ)^{t-1}-\Pi\right\|_{L^q\to L^\infty}\\
        &\leq 2\left\|(QPQ)^{t-1}-\Pi\right\|_{L^q\to L^\infty},
    \end{align*}
    as desired.
\end{proof}

Theorem \ref{thm:mixing time equivalence within QP,PQ,QPQ} and Remark \ref{remark:asympvar QP=PQ=QPQ} collectively justify the use of $P_{la}$ and $P_{ra}$ as viable alternatives to $(P_{la})_{ra}$, offering similar performance in terms of both mixing time and asymptotic variance, with less computational cost per iteration as extra benefits.

\section{$\pi$ without group invariance: artificial group planting}\label{sec:groupplanting}
In many problems, it is generally hard to determine the natural group symmetry of the target distribution $\pi$, particularly for continuous state spaces (e.g. $\mathcal{X}=\mathbb R^d$). This difficulty limits the direct application of the averaged chains developed in previous sections. To circumvent it, we propose two strategies by deliberately selecting a group by hand --- a procedure we term \textbf{artificial group planting}:
\begin{enumerate}
    \item \textbf{Importance sampling correction}: Given a group $G$ and Haar measure $\mu$, we sample from an auxiliary $G$-invariant distribution that approximates $\pi$, then correct the bias via importance sampling. Specifically, we take the auxiliary distribution to be $\pi_G$ with density given by $\pi_G(x):=\mathbb E_{g\sim \mu}\left(\pi(gx)\right)$.

    \item \textbf{State-dependent averaging}: Given a group $G$, we take the averaging procedure to be state-dependent, where the distribution of $g$ depends on the current state $x$ rather than following the Haar measure. Specifically, we provide generalized versions of $P_{la}$, $P_{ra}$ and $(P_{la})_{ra}$, and focus our analysis on these three chains. 
\end{enumerate}

\subsection{Importance sampling correction}
Given the target distribution $\pi$ and $f\in L^2(\pi)$, a common goal is to evaluate the expectation 
\begin{equation*}
    I(f)=\int_{\mathcal{X}}f(x)\pi(dx).
\end{equation*}
The importance sampling scheme introduces an auxiliary distribution $\pi_0$ which is usually more tractable than $\pi$ with $\pi\ll \pi_0$, then generates samples $\{X_i\}_{i=1}^n$ from $\pi_0$, and uses 
\begin{equation*}
    \widehat I_n(f):=\frac{\sum_{i=1}^n f(X_i)\phi(X_i)}{\sum_{i=1}^n \phi(X_i)}, \quad \text{where }\phi\propto \frac{d\pi}{d\pi_0}
\end{equation*}
as the estimator of $I(f)$. According to \citep{chatterjee2018sample}, the sample size sufficient and necessary for $\widehat I_n(f)$ to fully approximate $I(f)$ is the same order with 
\begin{equation}\label{eq:sample size for importance sampling}
    N=\exp \left(D_{KL}(\pi\|\pi_0)\right),
\end{equation}
where $D_{KL}\left(\pi\|\pi_0\right)=\int\log\frac{d\pi}{d\pi_0}d\pi$ is the classical KL divergence from $\pi_0$ to $\pi$.

In this subsection, after a finite group $G$ with uniform distribution $\mu$ is selected, we take the auxiliary distribution $\pi_0$ to be 
\begin{equation}\label{eq:pi_G}
    \pi_0(x)=\pi_G(x):=\mathbb E_{g\sim \mu}\left(\pi(gx)\right),
\end{equation}
which is easy to be verified as a $G$-invariant probability density. The reason behind the choice \eqref{eq:pi_G} lies in two aspects:
\begin{itemize}
    \item $\pi_G$ not only enables our averaged kernels to apply, but also minimizes the sample size required in \eqref{eq:sample size for importance sampling} among all $G$-invariant distributions. 
    
    \item One can design Markov chains targeting $\pi_G$ without evaluating the sum $\sum_{g\in G}$, allowing the approach to remain computationally feasible even if $G$ is very large. 
\end{itemize}

The first point is supported by the following result, which is a direct consequence of the Pythagorean identity under KL divergence in Section \ref{sec:geometry}. 
\begin{theorem}
    Let $G$ be a finite group acting on $\mathcal{X}$, and let $\mu$ be the uniform distribution on $G$. Under Assumption \ref{assum:pi and G}, for any distribution $\pi_0$ satisfying $\pi\ll \pi_0$ and $\pi_0(gx)=\pi_0(x)$ for any $g\in G$, we have 
    \begin{equation*}
        D_{KL}\left(\pi\|\pi_G\right)\leq D_{KL}\left(\pi\|\pi_0\right).
    \end{equation*}
\end{theorem}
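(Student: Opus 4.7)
The plan is to establish a Pythagorean-type identity for KL divergences of probability measures that is the distributional analog of Theorem~\ref{thm:pythKL}. Concretely, I would prove
\[
D_{KL}(\pi \| \pi_0) \;=\; D_{KL}(\pi \| \pi_G) \;+\; D_{KL}(\pi_G \| \pi_0),
\]
from which the inequality follows immediately by nonnegativity of KL divergence.

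The key observation is that the log-ratio
\[
\log \frac{\pi_G(x)}{\pi_0(x)}
\]
is a $G$-invariant function of $x$, because both $\pi_G$ (by definition) and $\pi_0$ (by hypothesis) are $G$-invariant densities. Therefore the cross-term that would obstruct a Pythagorean split reduces to the problem of showing that $G$-invariant test functions integrate the same against $\pi$ and against $\pi_G$. This is the main computational step, and I would verify it by a straightforward change of variables: for any $G$-invariant $f$,
\begin{align*}
	\int f(x)\,\pi_G(dx)
	&= \mathbb{E}_{g\sim \mu}\int f(x)\,\pi(gx)\,\mathfrak{m}(dx) \\
	&= \mathbb{E}_{g\sim \mu}\int f(g^{-1}y)\,\pi(y)\,\mathfrak{m}(g^{-1}dy) \\
	&= \mathbb{E}_{g\sim \mu}\int f(y)\,\pi(dy) \;=\; \int f(y)\,\pi(dy),
\end{align*}
where the second equality uses $y = gx$, the third uses Assumption \ref{assum:pi and G} (the Radon--Nikodym derivative of $\mathfrak{m}\circ g^{-1}$ with respect to $\mathfrak{m}$ equals $1$) together with $G$-invariance of $f$, and the last uses that the integrand no longer depends on $g$.

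Applying this identity to the $G$-invariant function $f(x) = \log(\pi_G(x)/\pi_0(x))$ yields
\[
\int \log\frac{\pi_G}{\pi_0}\,d\pi \;=\; \int \log\frac{\pi_G}{\pi_0}\,d\pi_G \;=\; D_{KL}(\pi_G \| \pi_0),
\]
while on the other hand a direct algebraic manipulation gives
\[
D_{KL}(\pi \| \pi_0) - D_{KL}(\pi \| \pi_G) \;=\; \int \log\frac{\pi_G}{\pi_0}\,d\pi.
\]
Combining these produces the Pythagorean identity, and discarding the nonnegative term $D_{KL}(\pi_G \| \pi_0)$ yields $D_{KL}(\pi \| \pi_G) \le D_{KL}(\pi \| \pi_0)$.

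The main obstacle, and essentially the only nontrivial step, is handling the change of variables rigorously under Assumption~\ref{assum:pi and G}; care must also be taken to check that $\pi \ll \pi_G$ (which follows from $\pi_G(x) \ge |G|^{-1}\pi(x)$ when $G$ is finite and $\mu$ is uniform), so that $D_{KL}(\pi\|\pi_G)$ is well defined whenever $D_{KL}(\pi\|\pi_0)$ is. Everything else is a bookkeeping consequence of $G$-invariance.
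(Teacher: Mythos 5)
Your proof is correct and its core engine is the same as the paper's: a Pythagorean identity under KL divergence. The paper reduces the problem to its Markov-kernel Pythagorean identity (Corollary~\ref{cor:pythKL}) by viewing $\Pi$, $\Pi_G$, $\Pi_0$ as rank-one Markov kernels, noting $\Pi_0 U_g = \Pi_0$ (so $\Pi_0 \in \mathcal{RI}(G)$) and identifying $\Pi_G$ as the right-average of $\Pi$, and then unwinding $D^{\pi}_{KL}(\Pi\|M) = D_{KL}(\pi\|m)$. You instead prove the distributional specialization from scratch: you show directly that $G$-invariant test functions have the same integral against $\pi$ and $\pi_G$ via the change of variables in Assumption~\ref{assum:pi and G}, apply this to the $G$-invariant log-ratio $\log(\pi_G/\pi_0)$ to kill the cross term, and thereby obtain $D_{KL}(\pi\|\pi_0) = D_{KL}(\pi\|\pi_G) + D_{KL}(\pi_G\|\pi_0)$. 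Both arguments are isomorphic at the level of computation—the cross-term cancellation in Theorem~\ref{thm:pythKL} is exactly your invariant-integral identity specialized to kernels that ignore their first argument—so neither buys extra generality. What the paper's route buys is economy, since the infrastructure already exists; what yours buys is self-containment and a cleaner isolation of the single fact (invariant integrands can't distinguish $\pi$ from $\pi_G$) that drives the inequality. Your remark that $\pi_G(x) \ge |G|^{-1}\pi(x)$ gives $\pi\ll\pi_G$ is a nice touch that the paper leaves implicit; to be fully airtight you'd also want to note $\pi_G\ll\pi_0$ (immediate from $\pi\ll\pi_0$, $G$-invariance of $\pi_0$, and finiteness of $G$) so that $\log(\pi_G/\pi_0)$ is well-defined $\pi$-a.e.
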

\begin{proof}
    For any $f\in L^2(\pi)$, let $\Pi_0[f](x):=\pi_0(f)$ and $\Pi_G[f](x):=\pi_G(f)$. Recalling $\mathcal{RI}(G)$ defined in Section \ref{sec:prelim}, we have
    \begin{align*}
        \pi_0(gx)=\pi_0(x), \quad \forall x\in \mathcal{X}, g\in G\quad &\Longrightarrow \quad  \Pi_0 U_g=\Pi_0, \quad \forall g\in G\\
        &\Longrightarrow \quad \Pi_0\in \mathcal{RI}(G),
    \end{align*}
    then by Corollary \ref{cor:pythKL}, observing that $\Pi_G=(\Pi)_{la}$, we have
    \begin{equation*}
        D_{KL}\left(\pi\|\pi_G\right)=D_{KL}^{\pi}\left(\Pi\|\Pi_G\right)\leq D_{KL}^{\pi}\left(\Pi\|\Pi_0\right)=D_{KL}\left(\pi\|\pi_0\right),
    \end{equation*}
    then the result follows.
\end{proof}

In the second point, recall that  
\begin{equation*}
    \pi_G(x)=\frac{1}{|G|}\sum_{g\in G}\pi(gx),
\end{equation*}
although many samplers targeting $\pi_G$ do not require the normalizing constant of $\pi$ --- they only need, for instance, ratios like $\pi_G(x)/\pi_G(y)$ or gradients such as $\nabla_x \log \pi_G(x)$ --- they still face the potentially prohibitive cost of computing the sum $\sum_{g\in G}$ when $G$ is exponentially large. This obstacle can be avoided by algorithms that replace the exact sum with an unbiased estimate, and a prominent example is the pseudo-marginal Metropolis-Hastings (PMMH) and its many variants, see \citep{andrieu2009pseudo} and a more recent survey \citep{sherlockpseudo}. Here we use a simple algorithm to illustrate how such approach can be applied in our setting. Let the joint distribution of $(x,g)$ to be
\begin{equation}\label{eq:joint distribution of (x,g)}
    \widetilde \pi(x,g):=\frac{\pi(gx)}{|G|}, \quad (x,g)\in \mathcal{X}\times G,
\end{equation}
then the marginal of $\widetilde \pi$ at $x$ is $\pi_G$, marginal at $g$ is $\mu$, and $\pi(gx)$ can be seen as an unbiased estimator of $\pi_G(x)$. Next, we perform the following updating procedure: Starting from $(x,g)$,
\begin{enumerate}[label=(\roman*)]
    \item Draw $x'$ from some proposal chain $q(x,\cdot)$;

    \item Draw $g'$ from the uniform distribution $\mu(g)=\frac{1}{|G|}$;

    \item Accept $(x',g')$ with probability 
    \begin{equation*}
        \alpha\left((x,g),(x',g')\right)=\min \left\{1, \frac{\pi(g'x')q(x',x)}{\pi(gx)q(x,x')}\right\}.
    \end{equation*}
\end{enumerate}
It is easy to see that $\widetilde \pi$ is the stationary distribution of such algorithm, then we get a sampler of its marginal $\pi_G$. According to \citep{andrieu2009pseudo}, this algorithm is ergodic and converges to $\widetilde \pi$ under mild conditions, and more explicit convergence properties can be found in \citep{andrieu2015convergence}.

\subsection{State-dependent averaging}\label{subsec:state dependent averaging}
If $\pi$ is not $G$-invariant, the standard averaged kernels generally fail to preserve stationarity with respect to $\pi$. To address this limitation and construct $\pi$-invariant averaged kernels with the desired improved properties, we propose a state-dependent averaging scheme based on previous sections. While similar constructions appear in \citep{khare2011spectral, kamatani2023non} for specific algorithmic purposes,  we develop in this subsection a general theoretical framework towards arbitrary Markov kernels, with fundamentally different motivations. We also provide more explicit theoretical guarantees in mixing improvement, as well as a geometric interpretation in terms of information projection. 

For convenience of practical implementation, we focus on the finite group case in this subsection. One first select a finite group $G$ acting on $\mathcal{X}$, with a slight abuse of notations with \eqref{eq:QP, PQ, QPQ}, we define, for $f \in L^2(\pi)$, $Q = Q(G,\pi) : L^2(\pi) \to L^2(\pi)$ to be
\begin{equation}\label{eq:state-dependent Q}
    Q[f](x):=Z_G(x)^{-1}\cdot\sum_{g\in G}f(gx)\pi(gx), \quad \text{where } Z_G(x):=\sum_{g\in G}\pi(gx).
\end{equation}
If $\pi$ is $G$-invariant, then $Q$ defined in \eqref{eq:state-dependent Q} coincides with the one in \eqref{eq:QP, PQ, QPQ}, hence \eqref{eq:state-dependent Q} can be understood as a generalization of it. Indeed, the $Q$ in \eqref{eq:state-dependent Q} is also a Markov kernel with an updating procedure as follows: 
\begin{equation}\label{eq:jump step Q}
    \textbf{Starting from }x, \textbf{ randomly draw }g\in G \textbf{ with probability }\frac{\pi(gx)}{Z_G(x)}, \textbf{ then update }x\leftarrow gx. 
\end{equation}
If the $G$ selected is not too large, this procedure is easy to implement, since the normalizing constant of $\pi$ is not required. Here we stress that $Q$ is generally non-ergodic because the chain remains in some group orbit, therefore one cannot use only $Q$ to sample from $\pi$. We now present some basic properties of $Q$ which are similar to those in \citep[Section 4]{khare2011spectral}.
\begin{lemma}\label{lem:properties of Q}
    Let $G$ be a finite group that acts on $\mathcal{X}$. Under Assumption \ref{assum:pi and G}, for the Markov kernel $Q=Q(G,\pi)$, the following statements hold.
    \begin{enumerate}[label=(\roman*)]
        \item $Q$ is reversible in $L^2(\pi)$, and thus $\pi$-stationary.

        \item $Q$ is the projection operator onto $V$, i.e. $Q=\mathbf{P}_V$, recalling \eqref{eq:G-invariant subspace}.
    \end{enumerate}
\end{lemma}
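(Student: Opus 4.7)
My plan is to prove the two items separately by direct computation, leaning on Assumption \ref{assum:pi and G} (that the action of each $g\in G$ preserves the reference measure $\mathfrak{m}$) and on the $G$-invariance of the normalizing factor $Z_G$. The very first observation I would record is that $Z_G$ itself is $G$-invariant: for any $h\in G$, $Z_G(hx)=\sum_{g\in G}\pi(ghx)=\sum_{g'\in G}\pi(g'x)=Z_G(x)$, by re-indexing $g'=gh$. This single fact is what makes both items work cleanly.

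For item (i), I would verify $\pi$-reversibility directly at the level of $L^2(\pi)$, i.e.\ show $\langle Q[f],h\rangle_\pi=\langle f,Q[h]\rangle_\pi$ for all $f,h\in L^2(\pi)$. Expanding the inner product against the reference measure yields
\begin{equation*}
\langle Q[f],h\rangle_\pi=\int_{\mathcal{X}}\sum_{g\in G} f(gx)\pi(gx)\,h(x)\,\frac{\pi(x)}{Z_G(x)}\,\mathfrak{m}(dx).
\end{equation*}
Term by term I change variables via $y=gx$; by the second bullet of Assumption \ref{assum:pi and G} this preserves $\mathfrak{m}$, and using $Z_G(g^{-1}y)=Z_G(y)$ the $g$-summand becomes $\int f(y)h(g^{-1}y)\pi(y)\pi(g^{-1}y)/Z_G(y)\,\mathfrak{m}(dy)$. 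Re-indexing $g\mapsto g^{-1}$ in the sum (a bijection of the finite $G$) reassembles the expression as $\langle f,Q[h]\rangle_\pi$, giving reversibility and hence $\pi$-stationarity.

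For item (ii), an orthogonal projection onto $V$ is characterized by being self-adjoint, idempotent, and having range exactly $V$. Self-adjointness is already item (i). For the range, I would show $Q[f]\in V$ for every $f$: applying the re-indexing argument above to $Q[f](hx)=Z_G(hx)^{-1}\sum_{g}f(ghx)\pi(ghx)$ and using $Z_G(hx)=Z_G(x)$ gives $Q[f](hx)=Q[f](x)$ for all $h\in G$. Conversely, if $f\in V$ then $f(gx)=f(x)$ for every $g$, so
\begin{equation*}
Q[f](x)=\frac{1}{Z_G(x)}\sum_{g\in G} f(x)\pi(gx)=f(x),
\end{equation*}
which simultaneously proves that $Q$ fixes $V$ pointwise (hence $Q^2=Q$ by first projecting into $V$ and then applying $Q|_V=\mathrm{Id}$) and that the range equals $V$. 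Combined with self-adjointness, this identifies $Q$ with $\mathbf{P}_V$.

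The only mildly delicate step is the change of variables in item (i): I need Assumption \ref{assum:pi and G} both to transport $\mathfrak{m}(dx)$ into $\mathfrak{m}(dy)$ without a Jacobian factor and to guarantee $Z_G$ is well-defined and $G$-invariant as a measurable function. Beyond that the argument is bookkeeping, so I do not expect a substantive obstacle.
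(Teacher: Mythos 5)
Your proof is correct and follows essentially the same route as the paper: for item (i), expand the inner product against $\mathfrak{m}$, change variables using the measure-preserving property from Assumption \ref{assum:pi and G} together with the $G$-invariance of $Z_G$, and re-index the finite sum over $G$; for item (ii), observe that $Q$ fixes $V$ pointwise and maps all of $L^2(\pi)$ into $V$, which combined with the self-adjointness from item (i) identifies $Q$ as the orthogonal projection $\mathbf{P}_V$.
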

\begin{proof}
    We only deal with the case of $\mathcal{X}=\mathbb R^d$, and the finite state space case is similar. 
    
    For item (i), for any $u,v\in L^2(\pi)$, we have 
    \begin{align*}
        \langle Q[u], v \rangle_\pi&=\int_{\mathcal{X}} \frac{\sum_{g\in G}u(gx)\pi(gx)}{Z_G(x)}\cdot v(x)\pi(x)\mathfrak{m}(dx)
        =\sum_{g\in G}\int_{\mathcal{X}}\frac{u(gx)v(x)\pi(gx)\pi(x)}{Z_G(x)}\mathfrak{m}(dx)\\
        &=\sum_{g\in G}\int_{\mathcal{X}}\frac{u(x)v(g^{-1}x)\pi(x)\pi(g^{-1}x)}{Z_G(x)}\mathfrak{m}(dx)\\
        &=\sum_{g\in G}\int_{\mathcal{X}}\frac{v(gx)u(x)\pi(gx)\pi(x)}{Z_G(x)}\mathfrak{m}(dx)\\
        &=\langle u, Q[v]\rangle_\pi,
    \end{align*}
    where we have used the fact that $Z_G(gx)=Z_G(x)$ and $\dfrac{d\mathfrak{m}\circ g^{-1}}{d\mathfrak{m}}=1$ for any $g\in G$. 

    For item (ii), for any $f\in V$, it is easy to verify $Q[f]=f$ from definition. Then, it suffices to show that $Q[f]\in V$ for any $f\in L^2(\pi)$. Actually, for any $f\in L^2(\pi)$ and $h\in G$, 
    \begin{align*}
        Q[f](hx)&=Z_G(hx)^{-1}\cdot \sum_{g\in G}f(ghx)\pi(ghx)\\
        &=Z_G(x)^{-1}\cdot \sum_{g\in G}f(gx)\pi(gx)=Q[f](x),
    \end{align*}
    then the result follows.
\end{proof}

For a Markov kernel $P$, we can extend the notions introduced in \eqref{eq:QP, PQ, QPQ} by setting $P_{la}=QP$, $P_{ra}=PQ$ and $(P_{la})_{ra}=QPQ$ with $Q=Q(G,\pi)$ under the same notations in this new situation. For general-double-averages, it is usually difficult to guarantee their $\pi$-stationarity. Therefore, in this subsection we restrict our attention to the three special averages $P_{la}$, $P_{ra}$ and $(P_{la})_{ra}$ and investigate their properties. In the following theorem, we show that most results in Section \ref{sec:improvement}, \ref{sec:geometry} and \ref{sec:comparison} carry over directly to these three kernels, and to avoid repetition, we omit the corresponding detailed formulas. Going beyond previous sections, under the same notations we define
\begin{equation*}
    \mathcal{LI}(G) = \mathcal{LI}(G,\pi):=\{P\in \mathcal{L}: QP=P\}, \quad \mathcal{RI}(G) = \mathcal{RI}(G,\pi) :=\{P\in \mathcal{L}: PQ=P\}.
\end{equation*}

\begin{theorem}\label{thm:state-dependent kernels}
    Assume $P\in \mathcal{S}(\pi)$. Let $G$ be any finite group acting on $\mathcal{X}$. Under Assumption \ref{assum:pi and G}, without assuming any group invariance of $\pi$, the arguments in the following results still hold for $P_{la}$, $P_{ra}$ and $(P_{la})_{ra}$ defined in this subsection (with $P_{da}$ in the original results replaced by these three kernels):
    \begin{enumerate}[label=(\roman*)]
        \item Improvement of multiplicative spectral gap: Theorem \ref{thm:singular value improvement} (assuming $P$ is compact) and Corollary \ref{cor:larger group is better}.

        \item Improvement of asymptotic variance: Theorem \ref{thm:asympvar improvement} (assuming $P$ is compact) and Remark \ref{remark:asympvar QP=PQ=QPQ} (both assuming $P$ is reversible).

        \item Pythagorean identities: Theorem \ref{thm:pythKL} (assuming $P$ and $M$ admit a transition density w.r.t. $\mathfrak{m}$ at any starting state $x$) and Theorem \ref{thm:pythHS} HS part (assuming $P$ is a HS operator). Precisely, we have
        \begin{align}
        	D_{KL}^\pi(P\|M) &= D_{KL}^\pi(P\|(P_{la})_{ra})+D_{KL}^\pi((P_{la})_{ra}\|M), \quad  M\in \mathcal{LI}(G)\cap \mathcal{RI}(G)\cap \mathcal{S}(\pi),\label{eq:Pythagorean KL for QPQ} \\
        	D_{KL}^\pi(P\|M) &= D_{KL}^\pi(P\|P_{la})+D_{KL}^\pi(P_{la}\|M), \quad  M\in \mathcal{LI}(G)\cap \mathcal{S}(\pi),\label{eq:Pythagorean KL for QP}\\
        	D_{KL}^\pi(P\|M) &= D_{KL}^\pi(P\|P_{ra})+D_{KL}^\pi(P_{ra}\|M), \quad  M\in \mathcal{RI}(G)\cap \mathcal{S}(\pi),\label{eq:Pythagorean KL for PQ}
        \end{align}
        and 
        \begin{align}
            \norm{P-M}_{\mathrm{HS}}^2 &=\norm{P-(P_{la})_{ra}}_{\mathrm{HS}}^2+\norm{(P_{la})_{ra}-M}_{\mathrm{HS}}^2, \quad M\in \mathcal{LI}(G)\cap \mathcal{RI}(G)\cap \mathcal{S}(\pi),\label{eq:Pythagorean HS for QPQ}\\
            \norm{P-M}_{\mathrm{HS}}^2 &=\norm{P-P_{la}}_{\mathrm{HS}}^2+\norm{P_{la}-M}_{\mathrm{HS}}^2, \quad M\in \mathcal{LI}(G)\cap \mathcal{S}(\pi),\label{eq:Pythagorean HS for QP}\\
            \norm{P-M}_{\mathrm{HS}}^2 &=\norm{P-P_{ra}}_{\mathrm{HS}}^2+\norm{P_{ra}-M}_{\mathrm{HS}}^2, \quad M\in \mathcal{RI}(G)\cap \mathcal{S}(\pi).\label{eq:Pythagorean HS for PQ}
        \end{align}

        \item Comparable mixing times: Theorem \ref{thm:mixing time equivalence within QP,PQ,QPQ}.
    \end{enumerate}
\end{theorem}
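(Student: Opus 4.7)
The plan is to leverage the key observation from Lemma \ref{lem:properties of Q}: even without $G$-invariance of $\pi$, the state-dependent kernel $Q = Q(G,\pi)$ is a $\pi$-reversible Markov kernel that acts as the $L^2(\pi)$-orthogonal projection $\mathbf{P}_V$ onto the $G$-invariant subspace $V$. Since $P_{la}$, $P_{ra}$, $(P_{la})_{ra}$ are still defined as $QP$, $PQ$, $QPQ$ respectively, the operator factorizations \eqref{eq:QP, PQ, QPQ} carry over, and $\pi$-stationarity of the three averaged kernels follows from $\pi Q=\pi$ together with $P\in \mathcal{S}(\pi)$.

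For items (i), (ii) and (iv), I would observe that the proofs of Theorems \ref{thm:singular value improvement}, \ref{thm:asympvar improvement}, \ref{thm:mixing time equivalence within QP,PQ,QPQ}, Corollary \ref{cor:larger group is better}, and Remark \ref{remark:asympvar QP=PQ=QPQ} invoke only three ingredients of $Q$: that $Q=\mathbf{P}_V$ is a self-adjoint idempotent, that $\pi Q=\pi$, and the universal Markov-kernel bound $\|Q\|_{L^q\to L^q}\leq 1$. All three are supplied by Lemma \ref{lem:properties of Q} (or are automatic for Markov operators), so the spectral, asymptotic variance and mixing-time proofs port over essentially verbatim, with $Q = \mathbb E_{g\sim\mu}(U_g)$ simply replaced by the state-dependent $Q$ of \eqref{eq:state-dependent Q}.

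The main obstacle is item (iii): the proof of Theorem \ref{thm:pythKL} rested on the bisection identity Theorem \ref{thm:bisection}, which requires $\pi \in \mathcal{I}(G)$ and is unavailable here. I would therefore prove \eqref{eq:Pythagorean KL for QP} directly. Expanding the three divergences, the identity reduces to showing
\begin{align*}
    \int\!\!\int \pi(x)\bigl[P(x,y)-(QP)(x,y)\bigr]\log\frac{(QP)(x,y)}{M(x,y)}\,\mathfrak{m}(dx)\,\mathfrak{m}(dy) = 0.
\end{align*}
For each fixed $y$, the identification $(QP)(\cdot,y) = Q[P(\cdot,y)]$ shows that $x\mapsto (QP)(x,y)$ is $G$-invariant (using $Z_G(gx)=Z_G(x)$), and $M\in \mathcal{LI}(G)$ means $QM=M$, which makes $x\mapsto M(x,y)$ $G$-invariant as well. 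Hence $\phi_y(x):=\log((QP)(x,y)/M(x,y))\in V$, and the inner integral vanishes by the projection property $\langle P(\cdot,y)-Q[P(\cdot,y)],\phi_y\rangle_\pi=0$. Identity \eqref{eq:Pythagorean KL for PQ} follows by the symmetric argument (swapping left/right), and \eqref{eq:Pythagorean KL for QPQ} by iterating, i.e.\ applying the $P_{la}$ identity to $PQ$ with $M\in \mathcal{LI}(G)\cap \mathcal{RI}(G)$.

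For the Hilbert-Schmidt counterparts \eqref{eq:Pythagorean HS for QP}--\eqref{eq:Pythagorean HS for QPQ}, the argument is purely algebraic and even cleaner: using $Q^*=Q$ and $Q^2=Q$, the cross term
\begin{align*}
    \langle P-QP,\,QP-M\rangle_{\mathrm{HS}} = \mathrm{Tr}\bigl((P^*-P^*Q)(QP-M)\bigr)
\end{align*}
collapses to $\mathrm{Tr}(P^*(QM-M))$, which vanishes precisely when $QM=M$, i.e., $M\in\mathcal{LI}(G)$; the $P_{ra}$ identity is dual (using $MQ=M$), and the $(P_{la})_{ra}$ identity combines both.
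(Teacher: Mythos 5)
Your proposal is correct and takes essentially the same route as the paper: items (i), (ii), (iv) are argued to port over via Lemma \ref{lem:properties of Q} (the paper states exactly this and gives no further detail), and item (iii) is proved directly rather than via the bisection identity, which is indeed unavailable here. For \eqref{eq:Pythagorean KL for QP} the paper carries out an explicit change of variables $x \to g^{-1}x$ using $Z_G(gx)=Z_G(x)$ and the characterization $QM=M \iff M(gx,y)=M(x,y)$; your framing of the same cancellation as $\langle P(\cdot,y)-Q[P(\cdot,y)],\phi_y\rangle_\pi=0$ with $\phi_y\in V$ is equivalent and arguably cleaner. The HS cross-term cancellation you give ($\mathrm{Tr}(P^*(QM-M))=0$) is a slightly more direct version of the paper's computation, which first shows $\mathrm{Tr}((P-QP)^*M)=0$ and then uses $QP\in\mathcal{LI}(G)\cap\mathcal{S}(\pi)$.

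Two points deserve more care than your sketch gives them. First, the $P_{ra}$ KL identity is not obtained by ``swapping left and right'': the relevant invariance on the $y$-side is $M(x,hy)=\tfrac{\pi(hy)}{\pi(y)}M(x,y)$, not plain $G$-invariance of $y\mapsto M(x,y)$. To recover the projection picture you need to observe that $PQ(x,y)/\pi(y)=Q\bigl[P(x,\cdot)/\pi(\cdot)\bigr](y)$, so the cross term becomes $\int\pi(x)\,\langle\psi_x-Q[\psi_x],\phi'_x\rangle_\pi\,\mathfrak{m}(dx)$ with $\psi_x:=P(x,\cdot)/\pi(\cdot)$ and $\phi'_x(y)=\log(PQ(x,y)/M(x,y))\in V$; the latter uses $PQ\in\mathcal{RI}(G)$ and $MQ=M$ through the paper's \eqref{eq:properties of MQ=M}. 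Your iteration for \eqref{eq:Pythagorean KL for QPQ} then closes correctly, since $QPQ\in\mathcal{LI}(G)\cap\mathcal{RI}(G)\cap\mathcal{S}(\pi)$ (from $Q^2=Q$ and $\pi Q=\pi$) can itself serve as $M$ in the one-sided identities. Second, before invoking the HS Pythagorean identity you should verify, as the paper does, that $QP$, $PQ$, $QPQ$ are themselves HS operators (e.g.\ $\norm{QP}_{\mathrm{HS}}\le\norm{P}_{\mathrm{HS}}$ via a basis adapted to $V\oplus V^\perp$); otherwise the norms in \eqref{eq:Pythagorean HS for QP}--\eqref{eq:Pythagorean HS for QPQ} are not a priori finite.
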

\begin{proof}
    The proofs for item (i), (ii) and (iv) are essentially the same with the original ones in previous sections. We only prove item (iii), for which we provide an alternative argument in the KL case, as the bisection property used earlier may not hold in this setting. 

    For Pythagorean identity under $\pi$-weighted KL divergence, we only deal with $\mathcal{X}=\mathbb R^d$, and the finite case is similar. We start with $(P_{la})_{ra}=QPQ$. Recalling that 
    \begin{align*}
        \{P\in \mathcal{S}(\pi): QPQ=P\}&=\{P\in \mathcal{S}(\pi): QP=P\}\cap \{P\in \mathcal{S}(\pi): PQ=P\}\\
        &=\mathcal{LI}(G)\cap \mathcal{RI}(G)\cap \mathcal{S}(\pi),
    \end{align*}
    for any $M\in \mathcal{LI}(G)\cap \mathcal{RI}(G)\cap\mathcal{S}(\pi)$, we first show that for any $g,h\in G$, 
    \begin{equation}\label{eq:properties of QMQ=M}
        M(gx, hy)=\frac{\pi(hy)}{\pi(y)}M(x,y), \quad \mathfrak{m}\text{-a.e. }x,y\in \mathcal{X}.
    \end{equation}
    According to Lemma \ref{lem:properties of Q}, it can be easily verified that 
    \begin{align}
        QP=P &\iff P[f]\in V, \quad \forall f\in L^2(\pi)\nonumber\\
        &\iff P(gx,y)=P(x,y), \quad \mathfrak{m}\text{-a.e. }y\in \mathcal{X},\enspace \forall x\in \mathcal{X}, g\in G,\label{eq:properties of QM=M}
    \end{align}
    based upon which we get 
    \begin{align}
        PQ=P &\iff QP^*=P^* \nonumber\\
        &\iff P^*(gx,y)=P^*(x,y), \quad \mathfrak{m}\text{-a.e. }y\in \mathcal{X},\enspace \forall x\in \mathcal{X}, g\in G\nonumber\\
        &\iff P(x, hy)=\frac{\pi(hy)}{\pi(y)}P(x,y), \quad \mathfrak{m}\text{-a.e. }x,y\in \mathcal{X},\label{eq:properties of MQ=M}
    \end{align}
    then \eqref{eq:properties of QMQ=M} follows. Next, we can decompose 
    \begin{align}
        D_{KL}^\pi(P\|M)&=D_{KL}^\pi(P\|QPQ)+D_{KL}^\pi(QPQ\|M)\nonumber\\
        &\quad +\int_{\mathcal{X}\times \mathcal{X}}\pi(x)\left(P(x,y)-QPQ(x,y)\right)\log \left(\frac{QPQ(x,y)}{M(x,y)}\right)\mathfrak{m}(dx)\mathfrak{m}(dy),\label{eq:cross term in KL(P||M)}
    \end{align}
    then it suffices to show the rightmost term above is $0$. We first rewrite $QPQ(x,y)$ in a more explicit form, and the reason that $QPQ$ also admits a density is shown later. For any $f\in L^2(\pi)$, we have
    \begin{align*}
        QP[f](x)&=\sum_{g\in G}\frac{P[f](gx)\pi(gx)}{Z_G(x)}=\sum_{g\in G}\frac{\pi(gx)\int_{\mathcal{X}}f(y)P(gx,y)\mathfrak{m}(dy)}{Z_G(x)}\\
        &=\int_{\mathcal{X}}f(y)\cdot\frac{\sum_{g\in G}P(gx,y)\pi(gx)}{Z_G(x)}\mathfrak{m}(dy),
    \end{align*}
    and similarly
    \begin{align*}
        PQ[f](x)&=\int_{\mathcal{X}}P(x,y)Q[f](y)\mathfrak{m}(dy)=\int_{\mathcal{X}}P(x,y)\frac{\sum_{g\in G}f(gy)\pi(gy)}{Z_G(y)}\mathfrak{m}(dy)\\
        &=\sum_{g\in G}\int_{\mathcal{X}}\frac{P(x,y)f(gy)\pi(gy)}{Z_G(y)}\mathfrak{m}(dy)=\sum_{g\in G}\int_{\mathcal{X}}\frac{P(x,g^{-1}y)f(y)\pi(y)}{Z_G(y)}\mathfrak{m}(dy)\\
        &=\int_{\mathcal{X}}f(y)\cdot \frac{\sum_{g\in G}P(x,gy)\pi(y)}{Z_G(y)}\mathfrak{m}(dy),
    \end{align*}
    therefore $QP$ and $PQ$ also admits a density at $x$ which can be written as
    \begin{equation}\label{eq:detailed form of QP and PQ}
        QP(x,y)=\frac{\sum_{g\in G}P(gx,y)\pi(gx)}{Z_G(x)}, \quad PQ(x,y)=\frac{\sum_{g\in G}P(x,gy)\pi(y)}{Z_G(y)}.
    \end{equation}
    We then see that
    \begin{align*}
        QPQ[f](x)&=\int_{\mathcal{X}}QP(x,y)Q[f](y)\mathfrak{m}(dy)=\int_{\mathcal{X}}\frac{\sum_{g\in G}P(gx,y)\pi(gx)}{Z_G(x)}\cdot \frac{\sum_{h\in G}f(hy)\pi(hy)}{Z_G(y)}\mathfrak{m}(dy)\\
        &=\sum_{g,h\in G}\int_{\mathcal{X}}\frac{f(hy)P(gx,y)\pi(gx)\pi(hy)}{Z_G(x)Z_G(y)}\mathfrak{m}(dy)\\
        &=\sum_{g,h\in G}\int_{\mathcal{X}}\frac{f(y)P(gx,h^{-1}y)\pi(gx)\pi(y)}{Z_G(x)Z_G(y)}\mathfrak{m}(dy)\\
        &=\int_{\mathcal{X}}f(y)\cdot \sum_{g,h\in G}\frac{\pi(gx)}{Z_G(x)}P(gx,hy)\frac{\pi(y)}{Z_G(y)}\mathfrak{m}(dy),
    \end{align*}
    hence the density of $QPQ(x,\cdot)$ is
    \begin{equation}\label{eq:detailed form of QPQ}
        QPQ(x,y)=\sum_{g,h\in G}\frac{\pi(gx)}{Z_G(x)}P(gx,hy)\frac{\pi(y)}{Z_G(y)}.
    \end{equation}
    Plugging \eqref{eq:detailed form of QPQ} into \eqref{eq:cross term in KL(P||M)}, we have 
    \begin{align*}
        &\quad\int_{\mathcal{X}\times \mathcal{X}}\pi(x)QPQ(x,y)\log \left(\frac{QPQ(x,y)}{M(x,y)}\right)\mathfrak{m}(dx)\mathfrak{m}(dy)\\
        &=\sum_{g,h\in G}\int_{\mathcal{X}\times \mathcal{X}}\pi(x)\cdot\frac{\pi(gx)}{Z_G(x)}P(gx,hy)\frac{\pi(y)}{Z_G(y)}\log \left(\frac{QPQ(x,y)}{M(x,y)}\right)\mathfrak{m}(dx)\mathfrak{m}(dy)\\
        &=\int_{\mathcal{X}\times \mathcal{X}}\sum_{g,h\in G}\pi(g^{-1}x)\cdot\frac{\pi(x)}{Z_G(x)}P(x,y)\frac{\pi(h^{-1}y)}{Z_G(y)}\log \left(\frac{QPQ(x,y)}{M(x,y)}\right)\mathfrak{m}(dx)\mathfrak{m}(dy)\\
        &=\int_{\mathcal{X}\times \mathcal{X}}\pi(x)P(x,y)\log \left(\frac{QPQ(x,y)}{M(x,y)}\right)\mathfrak{m}(dx)\mathfrak{m}(dy), 
    \end{align*}
    where in the second equality we have used \eqref{eq:properties of QMQ=M}. Then we obtain \eqref{eq:Pythagorean KL for QPQ}.
    
    For $P_{la}=QP$, for any $M\in \mathcal{LI}(G)\cap \mathcal{S}(\pi)$, using \eqref{eq:detailed form of QP and PQ}, we similarly have 
    \begin{align*}
        &\quad\int_{\mathcal{X}\times \mathcal{X}}\pi(x)QP(x,y)\log \left(\frac{QP(x,y)}{M(x,y)}\right)\mathfrak{m}(dx)\mathfrak{m}(dy)\\
        &=\sum_{g\in G}\int_{\mathcal{X}\times \mathcal{X}}\pi(x)\cdot\frac{\pi(gx)}{Z_G(x)}P(gx,y)\log \left(\frac{QP(x,y)}{M(x,y)}\right)\mathfrak{m}(dx)\mathfrak{m}(dy)\\
        &=\int_{\mathcal{X}\times \mathcal{X}}\sum_{g\in G}\pi(g^{-1}x)\cdot\frac{\pi(x)}{Z_G(x)}P(x,y)\log \left(\frac{QP(x,y)}{M(x,y)}\right)\mathfrak{m}(dx)\mathfrak{m}(dy)\\
        &=\int_{\mathcal{X}\times \mathcal{X}}\pi(x)P(x,y)\log \left(\frac{QP(x,y)}{M(x,y)}\right)\mathfrak{m}(dx)\mathfrak{m}(dy), 
    \end{align*}
    where we have used \eqref{eq:properties of QM=M} in the second equality. For $P_{ra}=PQ$, combining \eqref{eq:properties of MQ=M}, \eqref{eq:cross term in KL(P||M)} and \eqref{eq:detailed form of QP and PQ}, the argument is similar. We thus obtain \eqref{eq:Pythagorean KL for QP} and \eqref{eq:Pythagorean KL for PQ}.

    Next, we prove the Pythagorean identity under squared-HS norm. We first show that $P_{la}=QP$, $P_{ra}=PQ$ and $(P_{la})_{ra}=QPQ$ are all HS operators. Assume $L^2(\pi)=V\oplus V^\perp$ admits a set of orthonormal basis $\{f_i\}_{i\in \mathcal{V}_1}\cup \{f_i\}_{i\in \mathcal{V}_2}$, where $f_i\in V$ for $i\in \mathcal{V}_1$ and $f_i\in V^\perp$ for $i\in \mathcal{V}_2$. For any other set of basis $\{e_j\}_{j\in \mathcal{J}}$, we have 
    \begin{align*}
        \norm{QP}_{\mathrm{HS}}^2&=\sum_{i\in \mathcal{V}_1\cup \mathcal{V}_2, j\in \mathcal{J}}\left|\langle f_i, QP[e_j]\rangle_\pi\right|^2=\sum_{i\in \mathcal{V}_1\cup \mathcal{V}_2, j\in \mathcal{J}}\left|\langle Q[f_i], P[e_j]\rangle_\pi\right|^2\\
        &=\sum_{i\in \mathcal{V}_1, j\in \mathcal{J}}\left|\langle f_i, P[e_j]\rangle_\pi\right|^2\leq \norm{P}_{\mathrm{HS}}^2,
    \end{align*}
    and thus 
    \begin{align*}
        &\norm{PQ}_{\mathrm{HS}}^2 =\norm{QP^*}_{\mathrm{HS}}^2\leq \norm{P^*}_{\mathrm{HS}}^2=\norm{P}_{\mathrm{HS}}^2,\\
        &\norm{QPQ}_{\mathrm{HS}}^2 \leq \norm{PQ}_{\mathrm{HS}}^2\leq \norm{P}_{\mathrm{HS}}^2.
    \end{align*}
    Now,  for any $M\in \mathcal{LI}(G)\cap \mathcal{RI}(G)\cap\mathcal{S}(\pi)$, we can decompose
    \begin{align*}
        \norm{P-M}_{\mathrm{HS}}^2&=\norm{P-QPQ}_{\mathrm{HS}}^2+\norm{QPQ-M}_{\mathrm{HS}}^2\\
        &\quad +2\mathrm{Tr}((P-QPQ)^*(QPQ-M)),
    \end{align*}
    where the last term is 0, since by cyclic property of trace, 
    \begin{align*}
        \mathrm{Tr}((P-QPQ)^*M)&=\mathrm{Tr}((P^*-QP^*Q)QMQ)\\
        &=\mathrm{Tr}((QP^*Q-QP^*Q)M)=0,
    \end{align*}
    and this yields \eqref{eq:Pythagorean HS for QPQ}. For any $M\in \mathcal{LI}(G)\cap\mathcal{S}(\pi)$, we also have
    \begin{align*}
        \mathrm{Tr}((P-QP)^*M)&=\mathrm{Tr}((P^*-P^*Q)QM)\\
        &=\mathrm{Tr}((P^*Q-P^*Q)M)=0,
    \end{align*}
    and for any $M\in \mathcal{RI}(G)\cap\mathcal{S}(\pi)$, 
    \begin{align*}
        \mathrm{Tr}((P-PQ)^*M)&=\mathrm{Tr}((P^*-QP^*)MQ)\\
        &=\mathrm{Tr}((QP^*-QP^*)M)=0,
    \end{align*}
    which leads to \eqref{eq:Pythagorean HS for QP} and \eqref{eq:Pythagorean HS for PQ}. 
\end{proof}

\begin{remark}
    The Pythagorean identity under squared-Frobenius norm in this setting may not hold. We present two counterexamples on state space $\mathcal{X}=\{1,2,3\}$ with $\pi=(0.3,0.5,0.2)$. Let $G=\{e,(12)\}$, and 
    \begin{gather*}
        P=\begin{pmatrix}
              0.6 & 0.3 & 0.1\\[3pt]
              0.2 & 0.7 & 0.1\\[3pt]
              0.1 & 0.3 & 0.6
        \end{pmatrix},
        \qquad
        M = Q=\begin{pmatrix}
            0.375 & 0.625 & 0 \\[3pt]
            0.375 & 0.625 & 0 \\[3pt]
            0     & 0     & 1
        \end{pmatrix},
    \end{gather*}
    then one readily verifies that $M\in \mathcal{LI}(G)\cap \mathcal{RI}(G)\cap\mathcal{S}(\pi)$. We have 
    \begin{equation*}
       \norm{P-M}_{\mathrm{F}}^2=0.4725, 
    \end{equation*}
    and
    \begin{align*}
        \norm{P-QPQ}_{\mathrm{F}}^2+\norm{QPQ-M}_{\mathrm{F}}^2&=0.45625,\\
        \norm{P-QP}_{\mathrm{F}}^2+\norm{QP-M}_{\mathrm{F}}^2&=0.46250,\\
        \norm{P-PQ}_{\mathrm{F}}^2+\norm{PQ-M}_{\mathrm{F}}^2&=0.45625.
    \end{align*}
    This yields LHS $>$ RHS in the Pythagorean identities. Next, under the same $\pi$ and $G$, we take 
    \begin{gather*}
        P=\begin{pmatrix}
              \frac{2}{3} & \frac{1}{10} & \frac{7}{30}\\[4pt]
              \frac{3}{50} & \frac{22}{25} & \frac{3}{50}\\[4pt]
              \frac{7}{20} & \frac{3}{20} & \frac{1}{2}
        \end{pmatrix},
        \qquad
        M=Q=\begin{pmatrix}
            0.375 & 0.625 & 0 \\[3pt]
            0.375 & 0.625 & 0 \\[3pt]
            0     & 0     & 1
        \end{pmatrix},
    \end{gather*}
    then we compute that
    \begin{equation*}
        \norm{P-M}_{\mathrm{F}}^2\approx 0.9780, 
    \end{equation*}
    and 
    \begin{align*}
        \norm{P-QPQ}_{\mathrm{F}}^2+\norm{QPQ-M}_{\mathrm{F}}^2&\approx 0.9966,\\
        \norm{P-QP}_{\mathrm{F}}^2+\norm{QP-M}_{\mathrm{F}}^2&\approx 0.9791,\\
        \norm{P-PQ}_{\mathrm{F}}^2+\norm{PQ-M}_{\mathrm{F}}^2&\approx 0.9832,
    \end{align*}
    which leads to LHS $<$ RHS in the Pythagorean identities.
\end{remark}

A key assumption in Theorem \ref{thm:state-dependent kernels} item (iii) the KL-Pythagorean identities is that $P(x,\cdot)$ and $M(x,\cdot)$ should admit a density w.r.t. the reference measure $\mathfrak{m}$. This assumption breaks down for many practical MCMC schemes on continuous state space (e.g. $\mathcal{X}=\mathbb R^d$) that include an explicit rejection step, because such kernels place positive point mass on the current state, i.e. $P(x,\{x\})>0$. Typical examples are the various variants of the Metropolis-Hastings algorithm. Now, on continuous state space with reference measure $\mathfrak{m}$, we focus on the set of $P$ which can be decomposed into the continuous part and discrete part, i.e.
\begin{align}
    P(x,dy)&=P_c(x,y)\mathfrak{m}(dy)+\sum_{z\in A_x}\rho(x,z) \delta_z(dy)\label{eq:continuous and discrete decomposition of P}\\
    &=: P_c(x,dy)+P_d(x,dy),\nonumber
\end{align}
where $A_x\subset \mathcal{X}$ is a finite set depending on $x$, $\rho(x,z)>0$, and $P_c$, $P_d$ can be seen as two sub-stochastic kernels. We use $P_d(x,y)$ to denote the transition probability of $P_d$ from $x$ to $y\in A_x$, and $P_d(x,y)=0$ if $y\notin A_x$ (and so is $M_d$ appearing later). Next, we show that Pythagorean identities also hold for $P$ satisfying \eqref{eq:continuous and discrete decomposition of P}. 
\begin{corollary}
    Assume $P\in \mathcal{S}(\pi)$. Let $G$ be any finite group acting on $\mathcal{X}$. Under Assumption \ref{assum:pi and G}, suppose the state space $\mathcal{X}$ is continuous, $\mathfrak{m}$ has no positive point mass, and $P$ has the form of \eqref{eq:continuous and discrete decomposition of P} with $A_{gx}=gA_x$, then for $M$ satisfying
    \begin{equation}\label{eq:continuous and discrete decomposition of M}
        M(x,dy)=M_c(x,y)\mathfrak{m}(dy)+\sum_{z\in GA_x}r(x,z)\delta_z(dy)=:M_c(x,dy)+M_d(x,dy),
    \end{equation}
    where $GA_x:=\{gz: g\in G, z\in A_x\}$ and $r(x,z)>0$, we have 
    \begin{align*}
        	D_{KL}^\pi(P\|M) &= D_{KL}^\pi(P\|(P_{la})_{ra})+D_{KL}^\pi((P_{la})_{ra}\|M), \quad  M\in \mathcal{LI}(G)\cap \mathcal{RI}(G)\cap \mathcal{S}(\pi), \\
        	D_{KL}^\pi(P\|M) &= D_{KL}^\pi(P\|P_{la})+D_{KL}^\pi(P_{la}\|M), \quad  M\in \mathcal{LI}(G)\cap \mathcal{S}(\pi),\\
        	D_{KL}^\pi(P\|M) &= D_{KL}^\pi(P\|P_{ra})+D_{KL}^\pi(P_{ra}\|M), \quad  M\in \mathcal{RI}(G)\cap \mathcal{S}(\pi).
        \end{align*}
\end{corollary}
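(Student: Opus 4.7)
The plan is to reduce to the density-only case of Theorem~\ref{thm:state-dependent kernels}(iii) by splitting the $\pi$-weighted KL divergence along the continuous--atomic decomposition. Because the reference measure $\mathfrak{m}$ carries no point mass, $P_c(x,\cdot)$ and $P_d(x,\cdot)$ are mutually singular for every $x$, and likewise for $M$; assuming $P\ll M$ (the identity is vacuous otherwise), mutual singularity forces $P_c\ll M_c$ and $P_d\ll M_d$ pointwise, so that
\begin{equation*}
D_{KL}^\pi(P\|M) \;=\; \int_{\mathcal{X}}\pi(dx)\int P_c(x,y)\log\frac{P_c(x,y)}{M_c(x,y)}\mathfrak{m}(dy) \;+\; \int_{\mathcal{X}}\pi(dx)\sum_{z\in A_x}\rho(x,z)\log\frac{\rho(x,z)}{r(x,z)}.
\end{equation*}
The same split applies after $M$ is replaced by $P_{la}$, $P_{ra}$ or $(P_{la})_{ra}$, so it suffices to prove a Pythagorean identity for each piece separately.

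Next, I would verify that $P_{la}=QP$, $P_{ra}=PQ$ and $(P_{la})_{ra}=QPQ$ all inherit the continuous--atomic structure of $P$: since $Q$ is pure atomic, composing with $Q$ from either side produces a continuous part (through a change of variable using $\mathfrak{m}\circ g^{-1}=\mathfrak{m}$) and an atomic part whose support lies in $GA_x$, matching the assumed atomic support of $M$. The hypothesis $A_{gx}=gA_x$ enters essentially here, since it guarantees $\bigcup_{g\in G}A_{gx}=GA_x$ and makes the atomic supports $G$-stable. Under this decomposition, the invariance characterisations \eqref{eq:properties of QM=M}--\eqref{eq:properties of QMQ=M} split componentwise: for $M\in\mathcal{LI}(G)\cap\mathcal{RI}(G)\cap\mathcal{S}(\pi)$, the continuous density satisfies $M_c(gx,hy)=(\pi(hy)/\pi(y))M_c(x,y)$ while the atomic masses satisfy $r(gx,hz)=(\pi(hz)/\pi(z))r(x,z)$ for all $g,h\in G$ and $z\in GA_x$, with analogous one-sided statements for the individual factors $\mathcal{LI}(G)$ and $\mathcal{RI}(G)$.

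With these ingredients in hand, the cross-term calculation from the proof of Theorem~\ref{thm:state-dependent kernels}(iii) can be rerun verbatim on each component. On the continuous piece nothing changes. On the atomic piece the integral $\int(\cdot)\mathfrak{m}(dy)$ is replaced by the sum $\sum_{z\in GA_x}(\cdot)$, and the change of variable $y\mapsto h^{-1}y$ (originally using $\mathfrak{m}\circ g^{-1}=\mathfrak{m}$) is replaced by the bijection $z\mapsto h^{-1}z$ on the $G$-stable set $GA_x$; the atomic analogues of the invariance identities make the $\pi$-ratios cancel in exactly the same way, and summing over $G$ using $\sum_{g\in G}\pi(g^{-1}x)=Z_G(x)$ completes the telescoping. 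Adding the two vanishing cross terms back into the decomposition above yields each of the three Pythagorean identities. The main obstacle is bookkeeping: one must simultaneously track four interlocking sets $A_x$, $gA_x$, $A_{gx}$ and $GA_x$ and confirm that after $G$-symmetrisation the atomic sums collapse in parallel with the continuous integrals. The stipulation $A_{gx}=gA_x$ is precisely what makes this collapse possible.
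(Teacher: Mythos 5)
Your proposal is correct and follows the same route as the paper's proof: split along the continuous/atomic decomposition, use the componentwise invariance characterisation \eqref{eq:decomposition of QM=M}, and exploit $A_{gx}=gA_x$ to keep the atomic supports $G$-stable so that the change-of-variable bookkeeping from Theorem~\ref{thm:state-dependent kernels}(iii) goes through on both pieces. The only cosmetic difference is that you split the KL divergences into continuous and atomic parts up front and prove a Pythagorean identity for each piece, whereas the paper writes the cross-term expansion of the total KL divergence and then splits the cross term into the two pieces \eqref{eq:cross term continuous part}--\eqref{eq:cross term discrete part}; these are the same calculations carried out in a different order.
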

\begin{proof}
    Similar to the proof in Theorem \ref{thm:state-dependent kernels}, for $M$ satisfying \eqref{eq:continuous and discrete decomposition of M} and $M\in \mathcal{LI}(G)\cap \mathcal{RI}(G)\cap\mathcal{S}(\pi)$, we have
    \begin{align*}
         M_c(gx, hy)&=\frac{\pi(hy)}{\pi(y)}M_c(x,y), \quad \mathfrak{m}\text{-a.e. }x,y\in \mathcal{X}, \\
         M_d(gx, hy)&=\frac{\pi(hy)}{\pi(y)}M_d(x,y), \quad \forall x,y\in \mathcal{X},
    \end{align*}
    which is given by the following fact that can be easily verified: 
    \begin{equation}\label{eq:decomposition of QM=M}
        QM=M \iff QM_c=M_c, \enspace QM_d=M_d.
    \end{equation}
    Next, we need to write out the explicit form of $QP$, $PQ$ and $QPQ$. The continuous part is essentially the same with \eqref{eq:detailed form of QP and PQ} and \eqref{eq:detailed form of QPQ}, and we only need to deal with the discrete part. For any $f\in L^2(\pi)$, we have 
    \begin{align*}
        QP_d[f](x)&=\sum_{g\in G}\frac{P_d[f](gx)\pi(gx)}{Z_G(x)}=\sum_{g\in G}\sum_{z\in A_{gx}}\frac{\pi(gx)\rho(gx,z)f(z)}{Z_G(x)},\\
        P_dQ[f](x)&=\sum_{z\in A_x}\rho(x,z)Q[f](z)=\sum_{g\in G}\sum_{z\in A_x}\frac{\rho(x,z)\pi(gz)f(gz)}{Z_G(z)},\\
        QP_dQ[f](x)&=\sum_{g\in G}\frac{\pi(gx)P_dQ[f](gx)}{Z_G(x)}=\sum_{g,h\in G}\sum_{z\in A_{gx}}\frac{\pi(gx)\rho(gx,z)\pi(hz)f(hz)}{Z_G(x)Z_G(z)},
    \end{align*}
    hence recalling the assumption that $A_{gx}=gA_x$, we get
    \begin{align*}
        QP_d(x,dy)&=\sum_{g\in G}\sum_{z\in gA_{x}}\frac{\pi(gx)\rho(gx,z)}{Z_G(x)}\cdot \delta_z(dy),\\
        P_dQ(x,dy)&=\sum_{g\in G}\sum_{z\in A_x}\frac{\pi(gz)\rho(x,z)}{Z_G(z)}\delta_{gz}(dy),\\
        QP_dQ(x,dy)&=\sum_{g,h\in G}\sum_{z\in gA_x}\frac{\pi(gx)\rho(gx,z)\pi(hz)}{Z_G(x)Z_G(z)}\delta_{hz}(dy),
    \end{align*}
    then $QP(x,\cdot)$, $PQ(x,\cdot)$ and $QPQ(x,\cdot)$ are all absolutely continuous w.r.t. $M(x,\cdot)$ defined in \eqref{eq:continuous and discrete decomposition of M}, and $P(x,\cdot)$ is also absolutely continuous to these three. Therefore, we can proceed to use the decomposition of KL divergence and calculate the term similar to \eqref{eq:cross term in KL(P||M)}. For the case of $QPQ$, we have 
    \begin{align}
        &\quad \int_{\mathcal{X}\times \mathcal{X}}\pi(x)P(x,dy)\log \left(\frac{QPQ(x,dy)}{M(x,dy)}\right)\mathfrak{m}(dx)\nonumber\\
        &=\int_{\mathcal{X}\times \mathcal{X}}\pi(x)P_c(x,y)\log \left(\frac{QP_cQ(x,y)}{M_c(x,y)}\right)\mathfrak{m}(dx)\mathfrak{m}(dy)\label{eq:cross term continuous part}\\
        &\quad+\int_{\mathcal{X}\times \mathcal{X}}\pi(x)P_d(x,dy)\log\left(\frac{QPQ(x,dy)}{M(x,dy)}\right)\mathfrak{m}(dx)\label{eq:cross term discrete part},
    \end{align}
    and similar splitting holds for another term in \eqref{eq:cross term in KL(P||M)}, then it suffices to match the two parts respectively. The continuous part is direct via \eqref{eq:decomposition of QM=M} and the proof of Theorem \ref{thm:state-dependent kernels}, i.e.
    \begin{equation*}
        \int_{\mathcal{X}\times \mathcal{X}}\pi(x)QP_cQ(x,y)\log \left(\frac{QP_cQ(x,y)}{M_c(x,y)}\right)\mathfrak{m}(dx)\mathfrak{m}(dy)=\eqref{eq:cross term continuous part}.
    \end{equation*}
    For the discrete part, we have 
    \begin{align*}
        &\quad \int_{\mathcal{X}\times \mathcal{X}}\pi(x)QP_dQ(x,dy)\log\left(\frac{QP_dQ(x,dy)}{M_d(x,dy)}\right)\mathfrak{m}(dx)\\
        &=\int_{\mathcal{X}}\pi(x)\mathfrak{m}(dx)\sum_{y\in GA_x}\sum_{g,h\in G}\sum_{z\in gA_x}\frac{\pi(gx)\rho(gx,z)\pi(hz)}{Z_G(x)Z_G(z)}\mathbf{1}_{\{y=hz\}}\log\left(\frac{QP_dQ(x,y)}{M_d(x,y)}\right)\\
        &=\int_{\mathcal{X}}\pi(x)\mathfrak{m}(dx)\sum_{y\in GA_x}\sum_{g,h\in G}\frac{\pi(gx)\rho(gx,h^{-1}y)\pi(y)}{Z_G(x)Z_G(y)}\mathbf{1}_{\{y\in hgA_x\}}\log\left(\frac{QP_dQ(x,y)}{M_d(x,y)}\right)\\
        &=\int_{\mathcal{X}}\sum_{y\in GA_x}\sum_{g,h\in G}\frac{\pi(gx)\pi(x)\rho(x,y)\pi(hy)}{Z_G(x)Z_G(y)}\mathbf{1}_{\{y\in A_x\}}\log\left(\frac{QP_dQ(x,y)}{M_d(x,y)}\right)\mathfrak{m}(dx)\\
        &=\int_{\mathcal{X}}\sum_{y\in GA_x}\pi(x)\rho(x,y)\mathbf{1}_{\{y\in A_x\}}\log\left(\frac{QP_dQ(x,y)}{M_d(x,y)}\right)\mathfrak{m}(dx)\\
        &=\int_{\mathcal{X}}\pi(x)\mathfrak{m}(dx)\sum_{y\in A_x}\rho(x,y)\log\left(\frac{QP_dQ(x,y)}{M_d(x,y)}\right)=\eqref{eq:cross term discrete part},
    \end{align*}
    where in the third equality we have used change of variables and \eqref{eq:decomposition of QM=M}. For $QP$ and $PQ$, the argument is similar, then the result follows. 
\end{proof}

\subsection{Discussion of two methods}
In this subsection we discuss the advantages and disadvantages of these two methods proposed in the two previous subsections, and provide some guidelines on tuning the group $G$.

According to Corollary \ref{cor:larger group is better} and Theorem \ref{thm:state-dependent kernels} item (i), a larger group $G$ generally yields better improvement of the associated averaged kernels in terms of multiplicative spectral gap, so --- as a rule of thumb --- bigger is better for convergence performance. Yet each of two methods reacts differently to a large group:
\begin{itemize}
    \item First method (importance sampling correction).
        \begin{itemize}
             \item Advantage: We do not need to calculate the sum $\sum_{g\in G}$ through pseudo-marginal algorithms and thus straightforward to implement regardless of group size $|G|$.

             \item Disadvantage: Bias correction relies on the importance sampling step whose deviation to $I(f)$ (and hence the required sample size $N$ in \eqref{eq:sample size for importance sampling}) typically grows with $|G|$. For very large groups this extra sampling cost may erode the benefit in spectral gap.
         \end{itemize}

    \item Second method (state-dependent averaging).
         \begin{itemize}
             \item Advantage: Once $\pi(gx)/Z_G(x)$ in the $Q$-step \eqref{eq:jump step Q} is available (e.g. $G$ is small), no additional Monte Carlo resampling is required. 

             \item Disadvantage: Computing $\pi(gx)/Z_G(x)=\pi(gx)/\sum_{h\in G} \pi(hx)$ becomes hard when $G$ is exponentially large, making this strategy impractical in such cases. 
         \end{itemize}
\end{itemize}

Based on the discussion above, $G$ can be selected as follows. For the first method, one needs to achieve a trade-off between the spectral-gap improvement and the cost from sample-size requirement in importance sampling. This compromise is attractive in many statistical physics models whose target law already exhibits an ``approximate" symmetry, i.e. a $G$-invariant distribution can be found that closely matches the true target. Two notable papers fall into this direction \citep{ying2022annealed, ying2025multimodal}, where the Ising model with an external field is considered as an example, and an symmetric auxiliary distribution close to the target is paired in their annealed importance sampling (AIS) framework. In such settings, a carefully chosen $G$ delivers a significant spectral-gap improvement while keeping the extra budget from importance sampling within practical bounds.

For the second method, given the original Markov kernel $P$, one can try to minimize the distance of between $QP=Q(G,\pi)P$ and $\Pi$ under $\pi$-weighted KL divergence or squared-HS norm to select the optimal $G$ within some family $\mathcal{G}$ which contains moderate size of groups, i.e. to find
\begin{align*}
    G_{KL}^*=G_{KL}^*(P)&:=\argmin_{G\in \mathcal{G}}D_{KL}^{\pi}\left(Q(G,\pi)P\|\Pi\right),\\
    G_{\mathrm{HS}}^*=G_{\mathrm{HS}}^*(P)&:=\argmin_{G\in \mathcal{G}}\norm{Q(G,\pi)P-\Pi}_{\mathrm{HS}}^2,
\end{align*}
and according to Pythagorean identities in Theorem \ref{thm:state-dependent kernels}, this is equivalent to 
\begin{equation*}
    G_{KL}^*=\argmax_{G\in \mathcal{G}}D_{KL}^{\pi}\left(P\|Q(G,\pi)P\right), \quad G_{\mathrm{HS}}^*=\argmax_{G\in \mathcal{G}}\norm{P-Q(G,\pi)P}_{\mathrm{HS}}^2,
\end{equation*}
which is similar to the tuning strategy proposed in \citep[Section 6.1]{choi2025improving}. If the state space $\mathcal{X}$ is large, this optimization problem can still be challenging to solve computationally. 

Finally, we stress that there are other ways of state-dependent averaging. Another useful method is Metropolis-type averaging, while we call the method in Section \ref{subsec:state dependent averaging} as Gibbs-type averaging. To illustrate this, we consider the joint distribution $\widetilde{\pi}$ on extended state space $\mathcal{X}\times G$ introduced in \eqref{eq:joint distribution of (x,g)}. The step of randomly selecting group element can be seen as a transition kernel on $\mathcal{X}\times G$ targeting $\widetilde{\pi}$ with movement only on $G$, i.e. $(x,g)\to (x,g')$. Then the transition step of Section \ref{subsec:state dependent averaging} can be written as 
\begin{equation*}
    K_{\text{Gibbs}}\left((x,g),(x,g')\right)=\frac{\pi(g'x)}{Z_G(x)}=\widetilde{\pi}\left(g'|x\right).
\end{equation*}
The procedure of Metropolis-type is as follows:
\begin{enumerate}[label=(\roman*)]
    \item Starting from $(x,g)$, uniformly draw $g'\in G$;

    \item Accept with probability $\min\{1, \pi(g'x)/\pi(gx)\}$.
\end{enumerate}
Then the transition kernel is
\begin{align*}
    K_{\text{MH}}\left((x,g),(x,g')\right)&=
        \frac{1}{|G|}\min\left\{1, \frac{\pi(g'x)}{\pi(gx)}\right\}=\frac{1}{|G|}\min\left\{1, \frac{\widetilde{\pi}(x,g')}{\widetilde{\pi}(x,g)}\right\}, \quad g'\neq g,\\
    K_{\text{MH}}\left((x,g),(x,g)\right)&=1-\sum_{g'\in G, g'\neq g} K_{\text{MH}}\left((x,g),(x,g')\right).
\end{align*}
The advantage of Metropolis-type averaging is that it is still computationally feasible when $G$ is large. A similar form of this appears in \citep{ying2023double}, where the flipping group is taken for Ising models, and a Metropolis-type of move is made for choosing between the two elements in the group.

\section{Examples and applications}\label{sec:examples}

In this section, we highlight the practical value of our averaged kernels from the following complementary perspectives:
\begin{itemize}
    \item Algorithmic reformulation: Many modern sampling algorithms can be recast as some specific averaged kernels developed in previous sections. Viewing them through the lens of group symmetry not only reveals the key mechanism behind their acceleration, but also illustrates the broad applicability of our framework.

    \item Mixing enhancement: For some classical models, the technique of averaging can be applied on the standard samplers to improve the mixing time. Specifically, we shall consider improving the mixing time of Metropolis-Hastings from exponential to polynomial in the system size in a discrete bimodal V-shaped distribution in Section \ref{subsec:improveMH}.
\end{itemize}

\subsection{Algorithmic reformulation}

We consider several commonly used sampling algorithms and give their associated averaging ways to rewrite them in terms of group-averaged kernels. These examples unify disparate algorithms under a single framework, and provide practical templates for constructing and tuning $G$ in other problems.

\subsubsection{Swendsen-Wang algorithm}

The Swendsen-Wang algorithm introduced in \citep{swendsen1987nonuniversal} is the first non-local and cluster MCMC algorithm, and its numerous variants are widely used in statistical-physics simulation. The detailed procedure of this algorithm is as follows. Consider a $q$-Potts model of $n$-sites, let $(V,E)$ be the underlying graph where $|V|=n$ and $E$ is the undirected edge set. Let $\mathcal{X}=\{1,\dots,q\}^n$, for configuration $\sigma=(\sigma_1,\dots,\sigma_n)\in \mathcal{X}$, we define
\begin{equation*}
    \mathcal{H}(\sigma):=-\sum_{(i,j)\in E}J_{i,j}\mathbf{1}_{\{\sigma_i=\sigma_j\}}, \qquad \pi(\sigma)\propto e^{-\beta \mathcal{H}(\sigma)},
\end{equation*}
where $J_{i,j}>0$, and $\beta > 0$ is the inverse temperature. Starting from any configuration $\sigma$, we assign to each pair of vertices $i,j$ a Bernoulli random variable $b_{i,j}\in \{0,1\}$ following the rule:
\begin{gather*}
    \mathbb P\left(b_{i,j}=0|\sigma_i\neq \sigma_j\right)=1, \quad \mathbb P\left(b_{i,j}=1|\sigma_i\neq \sigma_j\right)=0,\\
    \mathbb P\left(b_{i,j}=0|\sigma_i= \sigma_j\right)=1-q_{i,j}, \quad \mathbb P\left(b_{i,j}=1|\sigma_i= \sigma_j\right)=q_{i,j},
\end{gather*}
where $q_{i,j}:=1-e^{-\beta J_{i,j}}$. If $b_{i,j}=1$, we say that there is a link between $i,j$, and for linked sites this defines a cluster. It is easy to see that sites in each cluster contain the same spin. Define $b:=(b_{i,j})_{n\times n}$ as the bond on the $n$ sites, $\mathcal{B}$ as the set of all possible bonds, and $\mathcal{C}(b)$ as the set of clusters induced by $b$. After the bond is updated, for each cluster, assign to the sites in it a new spin uniformly drawn from $\llbracket q\rrbracket$ and get a new configuration $\sigma'$. 
    
Now we show that on the extended state space $\mathcal{X}\times \mathcal{B}$, the transition kernel defined above can be written in the form of $P_{ra}=PQ$ where $Q$ is the state-dependent averaging introduced in Section \ref{subsec:state dependent averaging}. It is well known that the joint distribution of $(\sigma, b)$ is 
\begin{equation*}
    \widetilde{\pi}(\sigma,b)\propto \prod_{(i,j)\in E}\left(\left(1-q_{i,j}\right)^{1-b_{i,j}}\left(q_{i,j}\mathbf{1}_{\{\sigma_i=\sigma_j\}}\right)^{b_{i,j}}\right),
\end{equation*}
and the marginal in the $\sigma$-coordinate is $\pi$. The conditional distributions are
\begin{align*}
    \widetilde{\pi}(b|\sigma)&\propto \prod_{(i,j)\in E}\left(\left(1-q_{i,j}\right)^{1-b_{i,j}}\left(q_{i,j}\mathbf{1}_{\{\sigma_i=\sigma_j\}}\right)^{b_{i,j}}\right),\\
    \widetilde{\pi}(\sigma|b)&\propto \prod_{C\in \mathcal{C}(b)}\mathbf{1}_{\{\sigma_C\equiv\mathrm{const}\}},
\end{align*}
where $\sigma_C:=(\sigma_i)_{i\in C}$. Therefore, the algorithm can be interpreted as a Gibbs sampler targeting $\widetilde{\pi}$: from $(\sigma,b)$, draw $b'\sim \widetilde{\pi}(\cdot|\sigma)$ then $\sigma'\sim \widetilde{\pi}(\cdot|b')$. Next, take $P$ as the first step of Gibbs sampler, i.e.
\begin{equation*}
    P\left((\sigma,b),(\sigma',b')\right):=\widetilde \pi(b'|\sigma)\mathbf{1}_{\{\sigma=\sigma'\}},
\end{equation*}
and $G$ to be the direct product of $n$ permutation groups $S_q$, i.e.
\begin{equation*}
    G:=\prod_{i=1}^n S_q=S_q\times \cdots \times S_q.
\end{equation*}
For $g=(g_1,\cdots, g_n)\in G$, each $g_i$ is also a bijection on $\llbracket q\rrbracket$, then we extend the action of $g$ to $\mathcal{X}\times \mathcal{B}$ as
\begin{equation*}
    g\circ (\sigma,b):=(g\sigma, b), \quad \text{where }g\sigma:=(g_1\sigma_1,\cdots,g_n\sigma_n). 
\end{equation*}
Let 
\begin{equation*}
    G(b):=\left\{g\in G: g_C\equiv \text{const}, C\in \mathcal{C}(b)\right\},
\end{equation*}
then $G(b)\leq G$. Recalling for any $g\in G$,
\begin{equation*}
    \widetilde{\pi}(g\sigma,b)\propto \prod_{(i,j)\in E}\left(\left(1-q_{i,j}\right)^{1-b_{i,j}}\left(q_{i,j}\mathbf{1}_{\{g_i\sigma_i=g_j\sigma_j\}}\right)^{b_{i,j}}\right),
\end{equation*}
if assuming $\sigma_C\equiv \text{const}$ for $C\in \mathcal{C}(b)$ (this can be realized after $P$-step), we get
\begin{align*}
    \widetilde{\pi}(g\circ (\sigma,b))>0 &\iff g_i\sigma_i=g_j\sigma_j \text{ if } b_{i,j}>0\\
    &\iff g\in G(b),
\end{align*}
and that for $g,h\in G(b)$, 
\begin{equation*}
    \widetilde{\pi}(g\circ (\sigma,b))=\widetilde \pi(h\circ (\sigma,b)),
\end{equation*}
then the step of updating $\sigma'\sim \widetilde{\pi}(\cdot|b)$ is via randomly drawing $g\in G$ and setting $\sigma'=g\sigma$, where $g$ is selected according to the law
\begin{equation*}
    g|(\sigma,b) \sim \frac{\widetilde{\pi}(g\sigma,b)}{\sum_{g\in G(b)}\widetilde{\pi}(g\sigma,b)}=\frac{\widetilde{\pi}(g\circ (\sigma,b))}{\sum_{g\in G}\widetilde{\pi}(g\circ (\sigma,b))},
\end{equation*}
which is exactly the state-dependent averaging step introduced in Section \ref{subsec:state dependent averaging} with $Q=Q(G,\widetilde{\pi})$, then the associated Markov kernel of the algorithm on $\mathcal{X}\times \mathcal{B}$ is $P_{ra}=PQ$. One thing deserving noticing is that $P$ and $Q$ are non-ergodic on the extended state space $\mathcal{X}\times \mathcal{B}$, yet their composition is typically ergodic --- this demonstrates that the averaging technique can upgrade a Markov chain that is only stationary to one that is fully ergodic.

\subsubsection{Parallel tempering}

Parallel tempering (or replica exchange) algorithm, which evolves via an interacting particle system, is commonly used in molecular dynamics simulations and general optimization problems involving complex loss functions, see \citep{earl2005parallel} for a review. For a potential function $\mathcal{H}:\mathcal{X}\to \mathbb R$, under an inverse temperature $\beta>0$, we define its Gibbs distribution as 
\begin{equation*}
    \pi_{\beta}(x)\propto e^{-\beta \mathcal{H}(x)}, \quad x\in \mathcal{X}.
\end{equation*}
Given a sequence of inverse temperatures $0<\beta_1<\beta_2<\cdots<\beta_n:=\beta$, our target is to sample from the following distribution on $\mathcal{X}^n$:
\begin{equation*}
    \pi(x)\propto \prod_{i=1}^n e^{-\beta_i\mathcal{H}(x_i)}, \quad x=(x_1,\cdots,x_n)\in \mathcal{X}^n.
\end{equation*}
The algorithm in each iteration contains two steps:
\begin{enumerate}[label=(\roman*)]
    \item Level move: Uniformly choose a coordinate $i\in \llbracket n\rrbracket$ and update $x_i\to x_i'$ according to a Metropolis-Hastings move under inverse temperature $\beta_i$.

    \item Swap move: Uniformly choose $i\in \llbracket n-1\rrbracket$ and exchange their positions (i.e. $(x_i',x_{i+1}')\leftarrow (x_{i+1},x_i)$) according to an acceptance probability
    \begin{equation*}
        \alpha=\min \left\{1, \frac{\pi(x')}{\pi(x)}\right\}=\min \left\{1, e^{-(\beta_{i+1}-\beta_i)(\mathcal{H}(x_i)-\mathcal{H}(x_{i+1}))}\right\},
    \end{equation*}
    where $x=(x_1,\cdots,x_i,x_{i+1},\cdots,x_n)$ and $x'=(x_1,\cdots,x_{i+1},x_i,\cdots,x_n)$. 
\end{enumerate}

Next, denote the inverse temperature set by $\Lambda:=\{\beta_1,\cdots, \beta_n\}$, let $z_i:=(x_i,\omega_i)$, $\omega=(\omega_1,\cdots,\omega_n)\in \Lambda^n$ and $z=(z_1,\cdots,z_n)\in \mathcal{X}^n\times \Lambda^n$, define the joint distribution
\begin{equation*}
    \widetilde{\pi}(z)=\widetilde{\pi}(x,\omega)\propto \pi(x)\cdot \mathbf{1}_{\{\omega_i\neq \omega_j,\enspace \forall i\neq j\}},\quad x\in \mathcal{X}^n, \omega\in \Lambda^n,
\end{equation*}
we show that each step is some $P_{da}$ introduced in Section \ref{sec:prelim} targeting $\widetilde{\pi}$, and the whole algorithm on $\mathcal{X}^n\times \Lambda^n$ is the composition of two different $P_{da}$ that are both $\widetilde{\pi}$-stationary.

\textbf{Step (i)}: Let $P_1$ be the Markov chain that only changes the first coordinate, i.e. for $z=(z_1,\cdots,z_n)$ and $z'=(z_1',\cdots,z_n')$, define
\begin{equation*}
    P_1(z,z'):=q_{\omega_1}(x_1,x_1')\rho(x_1,x_1')\cdot \mathbf{1}_{\left\{z_{-1}=z_{-1}'\right\}}\cdot \mathbf{1}_{\left\{\omega_1=\omega_1'\right\}},
\end{equation*}
where $z_{-1}:=(x_i)_{i\neq 1}$ and $z_{-1}'=(x_i')_{i\neq 1}$, $q_{\omega_1}$ is some proposal chain on $\mathcal{X}$ under the inverse temperature $\omega_1$ and $\rho$ is the acceptance rate. Let 
\begin{align*}
    G&:=S_n, \\
    \nu_1(g,h)&:=\mu(g)\cdot \mathbf{1}_{\{h=g^{-1}\}},\quad g,h\in G,
\end{align*}
where $\mu(g)=\frac{1}{n!}$ is the uniform distribution on $G$. Define the group action
\begin{equation}\label{eq:group action in permutation}
    gz:=\left(z_{g^{-1}(1)},\cdots, z_{g^{-1}(n)}\right),
\end{equation}
then $\widetilde{\pi}$ is $G$-invariant, and the transition kernel corresponding to step (i) is 
\begin{equation*}
    K_1=\mathbb E_{(g,h)\sim \nu_1}\left(U_gP_1U_h\right)=\mathbb E_{g\sim \mu}\left(U_gP_1U_g^{-1}\right)=\overline {P_1}(G).
\end{equation*}

\textbf{Step (ii)}: Let $P_2$ be the Markov chain that swaps the first two coordinates, i.e. for $z=(z_1,\cdots,z_n)$ and $z'=(z_1',\cdots,z_n')$,
\begin{align*}
    P_2(z,z')&:=\left(\alpha(z_1,z_2)\cdot \mathbf{1}_{\left\{(x_1',x_2')=(x_2,x_1)\right\}}+(1-\alpha(z_1,z_2))\cdot \mathbf{1}_{\left\{(x_1',x_2')=(x_1,x_2)\right\}}\right)\\
    &\qquad \cdot \mathbf{1}_{\left\{z_{\llbracket n\rrbracket\backslash \{1,2\}}=z_{\llbracket n\rrbracket\backslash \{1,2\}}'\right\}}\cdot \mathbf{1}_{\left\{(\omega_1,\omega_2)=(\omega_1',\omega_2')\right\}},\\
    \text{where }& \quad \alpha(z_1,z_2)=\min \left\{1, e^{-(\omega_2-\omega_1)(\mathcal{H}(x_1)-\mathcal{H}(x_2)}\right\}.
\end{align*}
We still let $G=S_n$, and take
\begin{equation*}
    \nu_2(g,h):= \frac{1}{(n-1)!}\cdot \mathbf{1}_{\left\{g^{-1}(2)=g^{-1}(1)+1\right\}}\cdot \mathbf{1}_{\left\{h=g^{-1}\right\}}, \quad g,h\in G,
\end{equation*}
under the same action of \eqref{eq:group action in permutation}, the transition kernel of step (ii) is 
\begin{equation*}
    K_2=\mathbb E_{(g,h)\sim \nu_2}\left(U_gP_2U_h\right)=(P_2)_{da}(G,\nu_2).
\end{equation*}

Then, we can conclude that the Markov chain for the algorithm combining step (i) and (ii) is 
\begin{equation*}
    K=K_1K_2=\overline{P_1}(G)\cdot (P_2)_{da}(G,\nu_2).
\end{equation*}
This transition kernel is generally non-reversible. To get a reversible kernel, one can also consider 
\begin{equation*}
    K=\frac{1}{2}\left(K_1+K_2\right)=\frac{1}{2}\left(\overline{P_1}(G)+ (P_2)_{da}(G,\nu_2)\right).
\end{equation*}

\subsubsection{Hamiltonian Monte Carlo}

Hamiltonian Monte Carlo (HMC) is often used in sampling from continuous distribution on $\mathcal{X}=\mathbb R^d$, where a momentum variable is introduced. A recent survey can be found in \citep{neal2011mcmc}. For a potential function $U:\mathbb R^d\to \mathbb R$, the target distribution is
\begin{equation*}
    \pi(x)\propto e^{-U(x)}, \quad x\in \mathbb{R}^d.
\end{equation*}
HMC adds an auxiliary momentum variable $p\in \mathbb R^d$ following a Gaussian distribution $\mathcal{N}(0,M)$ with $M\succ 0$, and the joint distribution of $(x,p)\in \mathbb R^d\times \mathbb R^d$ is 
\begin{align*}
    \widetilde{\pi}(x,p)\propto \exp\left(-\mathcal{H}(x,p)\right), \quad \text{where } \mathcal{H}(x,p):=U(x)+\frac{1}{2}p^TM^{-1}p.
\end{align*}
Define the Hamiltonian flow as
\begin{equation*}
    \dot x=\nabla_p \mathcal{H}=M^{-1}p,\qquad 
    \dot p=-\nabla_x \mathcal{H}=-\nabla U(x),
\end{equation*}
let $\Phi_t(x,p):=(x_t,p_t)$ be the solution at time $t$ starting from initial point $(x,p)$, then it is well known that 
\begin{equation*}
    (x,p)\sim \widetilde{\pi} \quad \Longrightarrow \quad \Phi_t(x,p)\sim \widetilde{\pi}.
\end{equation*}
The algorithm is to use Leapfrog integrator as an approximation of $\Phi_t$, i.e. for fixed $\Delta T>0$, define 
\begin{equation*}
    \widehat \Phi_{\Delta T}(x,p):=\mathrm{Leapfrog}_{L,\varepsilon}(x,p),
\end{equation*}
where $\Delta T=L\varepsilon$ with $\varepsilon$ as the step size and $L$ as the step numbers in the Leapfrog integrator. Then the updating procedure is as follows:
\begin{enumerate}[label=(\roman*)]
    \item Starting from $(x,p)$, calculate $\widehat \Phi_{\Delta T}(x,p)$ and accept with probability 
    \begin{equation*}
        \alpha(x,p)=\min \left\{1, \exp\left(-\left(\mathcal{H}(\widehat \Phi_{\Delta T}(x,p))-\mathcal{H}(x,p)\right)\right)\right\}.
    \end{equation*}

    \item Refresh the momentum with $p'=-p$ or $p'=\xi$ with $\xi\sim \mathcal{N}(0,M)$ independent of $(x,p)$.
\end{enumerate}
We show that this procedure has the form of $P_{ra}=PQ$ where $Q$ is state-dependent averaging if $p'=\xi$, and the form $P_{da}$ if $p'=-p$. Take $P$ to represent the step (i), i.e.
\begin{equation*}
    P((x,p),(x',p')):=\alpha(x,p)\cdot \mathbf{1}_{\left\{(x',p')=\widehat \Phi_{\Delta T}(x,p)\right\}}+(1-\alpha(x,p))\cdot \mathbf{1}_{\left\{(x',p')=(x,p)\right\}}.
\end{equation*}
For step (ii), if the refreshed momentum is $p'=\xi\sim N(0,M)$, then take the group to be the translation group, i.e.
\begin{equation*}
    G_1:=\mathrm{Trans}(\mathbb R^d)=\left\{\tau_v: v\in \mathbb R^d\right\},
\end{equation*}
where $\tau_v: \mathbb R^d\times \mathbb R^d\to \mathbb R^d \times \mathbb R^d$ is translation map, i.e.
\begin{equation*}
    \tau_v(x,p):=(x,p+v), \quad (x,p)\in \mathbb R^d \times \mathbb R^d,
\end{equation*}
then $G_1$ is a locally compact group with Lebesgue measure as the Harr measure. We observe that 
\begin{equation*}
    \widetilde{\pi}(\tau_v(x,p))\propto \exp\left(-\frac{1}{2}(p+v)^TM^{-1}(p+v)\right), \quad \tau_v\in G_1,
\end{equation*}
then the state-dependent way of selecting $\tau_v\in G_1$ gives the conditional distribution
\begin{equation*}
    v|(x,p) \sim \mathcal{N}(-p,M),
\end{equation*}
in this case $p'=p+v\sim \mathcal{N}(0,M)$ and is independent of $(x,p)$, which is equivalent to taking $p'=\xi$. Therefore, the whole transition kernel is $PQ(G_1,\widetilde{\pi})$. Although $G_1$ is not a finite group, we stress that the state-dependent averaging technique in Section \ref{subsec:state dependent averaging} may extend readily to general groups under mild conditions.

If $p'=-p$ in step (ii), then we take the group to be the flipping group, i.e. 
\begin{equation*}
    G_2:=\mathbb Z_2=\{e,g_0\},\quad \text{where }g_0(x,p)=(x,-p), \quad (x,p)\in \mathbb R^d \times \mathbb R^d,
\end{equation*}
and take 
\begin{equation*}
    \nu(g,h):=\mathbf{1}_{\{g=e\}}\cdot\mathbf{1}_{\{h=g_0\}},\quad g,h\in G_2,
\end{equation*}
this is a deterministic jump, and $\widetilde{\pi}$ is $G_2$-invariant. Then the whole transition kernel is $P_{da}(G_2,\nu)$.

\subsubsection{Piecewise-deterministic Markov process}
Piecewise-deterministic Markov process (PDMP) \citep{davis1984piecewise} with velocity $v$ as the extended variable is a rejection-free sampler that alternates between a deterministic ODE flow and random jumps, which is similar to HMC but differs in two essential respects: the flow is simple and analytically solvable, and stationarity is enforced by random jumps instead of a Metropolis acceptance step. 

Starting from $(x,v)$, the updating procedure of PDMP is to first simulate a random jump time $\tau$ according to some prescribed distribution and calculate the flow up to time $\tau$ to reach $(x_\tau, v_\tau)$, then velocity $v$ jumps to $v'$ following some rule while maintaining $x$. For brevity, we skip the detailed constructions of the jump-time distribution and deterministic flow, and focus instead on the velocity-jumping step. Let $P$ represents the flow step of $(x,v)\to (x_\tau,v_\tau)$ which can be viewed as a discrete-time chain, and we show that for two standard PDMPs --- the bouncy particle sampler (BPS) \citep{bouchard2018bouncy} and the Zig-Zag process \citep{bierkens2019zig} --- their corresponding transition kernel can be written in the form of $P_{da}$. 

\textbf{Bouncy particle sampler}: For $(x,v)\in \mathbb R^d\times \mathbb R^d$, their joint distribution is 
\begin{equation*}
    \widetilde{\pi}(x,v)\propto \exp \left(-U(x)-\frac{1}{2}|v|^2\right),
\end{equation*}
where $U:\mathbb R^d\to \mathbb R$ is the potential function and $\pi(x)\propto e^{-U(x)}$ is the marginal. In the velocity-jumping step $(x,v)\to (x,v')$, the velocity is updated as 
\begin{equation*}
    v\to v':=v-2\cdot\frac{v^T\nabla U(x)}{|\nabla U(x)|^2}\cdot\nabla U(x),
\end{equation*}
which is a reflection on the hyperplane normal to the gradient. Similar to the case of HMC, we also take the flipping group:
\begin{equation*}
    G:=\mathbb Z_2=\{e,g_0\},
\end{equation*}
where 
\begin{equation*}
    g_0(x,v):=\left(x,v-2\cdot\frac{v^T\nabla U(x)}{|\nabla U(x)|^2}\cdot\nabla U(x)\right).
\end{equation*}
It is easy to see that this is well-defined (i.e. $g_0^2=e$ under such definition) and $\widetilde{\pi}$ is $G$-invariant. Define
\begin{equation*}
    \nu(g,h):=\mathbf{1}_{\{g=e\}}\cdot\mathbf{1}_{\{h=g_0\}},\quad g,h\in G,
\end{equation*}
then the transition kernel is $P_{da}(G,\nu)$. 

\textbf{Zig-Zag process}: For $(x,v)\in \mathbb R^d\times \{-1,1\}^d$, the joint distribution is 
\begin{equation*}
    \widetilde{\pi}(x,v)\propto e^{-U(x)},
\end{equation*}
which means $v$ follows the uniform distribution in $\{-1,1\}^d$. The velocity jumping is $(x',v')=(x,-v)$, hence it is direct to see that $G$ can also taken to be $\mathbb Z_2=\{e,g_0\}$ with $g_0(x,v)=(x,-v)$. We also define $\nu(g,h):=\mathbf{1}_{\{g=e\}}\cdot\mathbf{1}_{\{h=g_0\}}$, then the transition kernel is $P_{da}(G,\nu)$.

For other algorithms of PDMP such as Boomerang sampler \citep{bierkens2020boomerang} and event chain Monte Carlo \citep{krauth2021event}, one can easily construct the group and averaged kernels to characterize the samplers in an analogous way of the above two examples. 

\subsubsection{Markov chains with deterministic jumps}

Adding a deterministic jump before each step of a Markov chain can remarkably accelerate mixing. Apart from the samplers listed before, a notable breakthrough is \citep{chatterjee2021correction}, which shows that on finite state space $\mathcal{X}=\llbracket n\rrbracket$, for most of the permutation matrices $S$ on $\llbracket n\rrbracket$, the chain $SP$ mixes much faster than $P$ (both stationary w.r.t. the uniform distribution on $\mathcal{X}$). Under the same setting, for $\pi$ as the uniform distribution and $P$ as a $\pi$-stationary Markov chain, \citep{bordenave2019cutoff} gives a sharp characterization of the worst-case TV mixing time of $SP$: for most $S$, $SP$ exhibits the cutoff phenomenon with cutoff time at 
\begin{equation*}
    t=\frac{\log n}{\mathfrak{h}}, \quad \text{where }\mathfrak{h}=\log n-D_{KL}^{\pi}\left(P\|\Pi\right).
\end{equation*}
In particular, if $P$ is a simple random walk on $\llbracket n\rrbracket$, then its mixing time is $\Theta(n^2)$, while the above two references both show that $SP$ can mix in $\mathcal{O}(\log n)$ steps for most choices of $S$. 

A naive construction of $G$ to fit the above framework is to take $G=S_n$ the permutation group on $\llbracket n\rrbracket$, and define 
\begin{equation*}
    g_0x:=s(x),\quad \text{where }S(x,s(x))=1, \enspace x\in \llbracket n\rrbracket,
\end{equation*}
so that $SP=U_{g_0}P$, a special case of $P_{da}$. However, this choice offers little practical implication because $S_n$ is too large ($|S_n|=n!$). If one can identify a much smaller subgroup $G_1\leq S_n$ such that $g_0\in G_1$, then $SP$ can be further improved via $QP$ the uniform averaging over $G_1$, which is computationally feasible. The reason that we take the group containing $g_0$ instead of arbitrary groups of similar size comes from an heuristic perspective: if the jump $g_0$ is already known to accelerate the chain, one can intuitively expect its iterates $g_0^k$ to be similarly useful (this is exemplified by \eqref{eq:chung-diaconis-graham linear} and references below), thus it is prudent to secure the gains via averaging over the group containing these, such as the cyclic group $\langle g_0\rangle$ generated by $g_0$, while the benefits of unrelated groups can be uncertain.

Now we provide some examples where a small subgroup containing $g_0$ can be found. We consider the Chung-Diaconis-Graham chain \citep{chung1987random} and its many variants to sample from the uniform distribution $\pi$ on finite state space $\mathcal{X}=\llbracket n\rrbracket$. For $k\geq 0$ and $a\in \llbracket n\rrbracket$, let
\begin{equation}\label{eq:chung-diaconis-graham linear}
    X_{k+1}=aX_k+\varepsilon_{k+1} \enspace (\text{mod } n),
\end{equation}
where $\varepsilon_k\sim \pi_0$ are i.i.d. random variables. This defines a non-reversible chain admitting $\pi$ as the stationary distribution if $\text{gcd}(a,n)=1$. If $a=2$ and $\pi_0$ is the uniform distribution on $\{-1,0,1\}$, \eqref{eq:chung-diaconis-graham linear} covers the classical chain in \citep{chung1987random} with mixing time of $\Theta(\log n)$ for almost all odd $n$. For most of $n$ such that $\text{gcd}(a,n)=1$ with $a\geq 2$, it is shown in \citep{eberhard2021mixing} that \eqref{eq:chung-diaconis-graham linear} exhibits a cutoff with cutoff time of order $\Theta(\log n)$. In this case, let $P$ denotes the transition kernel corresponding to taking $a=1$ in \eqref{eq:chung-diaconis-graham linear}, and define
\begin{equation*}
    g_0x:=ax\enspace (\text{mod }n), \quad x\in \llbracket n\rrbracket,
\end{equation*}
then the transition kernel of \eqref{eq:chung-diaconis-graham linear} is $U_{g_0}P$. It is easy to see that 
\begin{equation*}
    g_0^{\varphi(n)}=e,
\end{equation*}
where $\varphi (n)$ is Euler's totient function. Therefore, $g_0$ belongs to the cyclic group 
\begin{equation*}
    G_1:=\langle g_0\rangle=\left\{g_0^k: 1\leq k\leq \varphi(n)\right\}.
\end{equation*}
It is well known that $\varphi(n)\leq n-1$, hence $|G_1|=\text{ord}(g_0)\leq n-1\ll |S_n|$. Averaging \eqref{eq:chung-diaconis-graham linear} over $G_1$ yields a kernel $K$, i.e.
\begin{equation*}
    K=QU_{g_0}P=\frac{1}{|G_1|}\sum_{g\in G_1}U_gU_{g_0}P=\frac{1}{\varphi(n)}\sum_{k=1}^{\varphi(n)}U_{g_0^k} P,
\end{equation*}
which has a better multiplicative spectral gap than \eqref{eq:chung-diaconis-graham linear}. 

More generally, we may allow a non-linear jump at each step, i.e.
\begin{equation}\label{eq:chung non-linear}
    X_{k+1}=f(X_k)+\varepsilon_{k+1}\enspace (\text{mod }n),
\end{equation}
where $f:\llbracket n\rrbracket \to \llbracket n\rrbracket$ is a bijection. We then define $g_0x:=f(x)$. For an arbitrary $f$, the stationary distribution for \eqref{eq:chung non-linear} can be hard to identify, so we restrict our attention to a few representative choices of $f$ for which stationary distribution can be explicitly established. Now we assume $n$ is a \textbf{prime}. If $f=f_1/f_2$ for some coprime $f_1,f_2\in \mathbb F_n[x]$ such that $f$ is a bijection and not a linear function, then for some certain $\pi_0$ the distribution of $\varepsilon_k$, \citep{he2022markov} shows that the lazified version of \eqref{eq:chung non-linear} has the mixing time of $\mathcal{O}(n^{1+\varepsilon})$ for any $\varepsilon>0$ (although stationary distribution may not be uniform). Here is an example of such $f$:
\begin{itemize}
    \item $f(x)=ax^k$: For $a\in \mathbb F_n^{\times}$ and $\text{gcd}(k,n-1)=1$, define $m\in \llbracket n\rrbracket$ such that $mk=1 \enspace (\text{mod }n-1)$, then $f$ is a bijection with $f^{-1}(x)=(a^{-1}x)^m$. For the $g_0$ induced by $f$, one can readily verify that $g_0$ belongs to the following group
    \begin{align*}
        G_1&:=\left\{f_{a,k}:x\to ax^k \big | a\in \mathbb F_n^{\times}, \text{gcd}(k,n-1)=1\right\} \\
        & \cong \mathbb F_{n}^{\times}\rtimes R_{n-1},
    \end{align*}
    where $R_{n-1}:=\left\{k\in \mathbb Z/(n-1)\mathbb Z:\text{gcd}(k,n-1)=1\right\}$ is the reduced residue system modulo $n-1$, and the semi-direct product is defined to be $(a,u)\cdot (b,v):=(ab^u,uv)$ for $a,b\in \mathbb F_{n}^{\times}$ and $u,v\in R_{n-1}$. Then $|G_1|=(n-1)\varphi(n-1)$, which is also much smaller than $|S_n|$. Since $\langle g_0\rangle\leq G_1$, we have $|\langle g_0\rangle|\leq (n-1)\varphi(n-1)$, and thus one can similarly average over $\langle g_0\rangle$ to get an improved kernel, i.e.
    \begin{equation*}
        K=\frac{1}{|\langle g_0\rangle|}\sum_{g\in \langle g_0\rangle}U_gU_{g_0}P=\frac{1}{(n-1)\varphi(n-1)}\sum_{k=1}^{(n-1)\varphi(n-1)}U_{g_0^k}P,
    \end{equation*}
    where $P$ is the transition kernel taking $f=\text{id}.$ in \eqref{eq:chung non-linear}.
\end{itemize}

\subsubsection{A counter-example}
In previous sections, we have shown that when $\pi$ is $G$-invariant, uniform averaging over $G$ can be optimal in enlarging the (multiplicative) spectral gap. However, spectral gap may even remain zero after averaging, and it is still far from precisely characterizing mixing times. The Diaconis-Holmes-Neal sampler \citep{diaconis2000analysis} illustrates this possible phenomenon. This non-reversible chain can be written as some $P_{da}$, and its uniformly right-averaged kernel $(P_{da})_{ra}$ has the multiplicative spectral gap of $0$, just like $P_{da}$. To be worse, $(P_{da})_{ra}$ mixes much more slowly than $P_{da}$: the worst-case TV mixing time deteriorates from order $n$ to $n^2$.

On the $2n$-cycle $\mathcal{X}= \llbracket 2n \rrbracket$, the chain $K$ is defined to be 
\begin{gather*}
    K(x,x+1)=1-\frac{1}{n},\quad K(x,2n-x)=\frac{1}{n}, \quad x \in \llbracket 2n-1 \rrbracket,\\
    K(2n,2n)=K(n,n)=\frac{1}{n}, \quad K(2n,1)=K(n,n+1)=1-\frac{1}{n}.
\end{gather*}
Now, we take $P$ to be the deterministic move on the cycle, i.e.
\begin{equation*}
    P(x,x+1)=1, \quad x \in \llbracket 2n-1 \rrbracket, \quad \text{and}\quad P(2n,1)=1. 
\end{equation*}
Let $G:=\mathbb Z_2=\{e,g_0\}$ be the flipping group, where 
\begin{equation*}
    g_0x:=2n+1-x, \quad x \in \llbracket 2n \rrbracket,
\end{equation*}
and define
\begin{equation*}
    \nu(g,h):=\mathbf{1}_{\{g=e\}}\cdot \left(\frac{1}{n}\cdot\mathbf{1}_{\{h=g_0\}}+\left(1-\frac{1}{n}\right)\cdot \mathbf{1}_{\{h=e\}}\right),
\end{equation*}
then 
\begin{equation*}
    K=P_{da}(G,\nu)=\frac{1}{n}PU_{g_0}+\left(1-\frac{1}{n}\right)P. 
\end{equation*}
To get a spectral improvement, one can take the uniformly right-averaged kernel, i.e.
\begin{equation*}
    K_{ra}=KQ=\frac{1}{2}PU_{g_0}+\frac{1}{2}P,
\end{equation*}
which is equivalent to substituting the change rate from $1/n$ to $1/2$ in $K$. The method to calculate the multiplicative spectral gaps of $K$ and $K_{ra}$ is similar to \citep{diaconis2000analysis}. Let $p$ be the change rate, i.e. $p=1/n$ for $K$ and $p=1/2$ for $K_{ra}$. Take the Fourier basis $\{u_h: -(n-1)\leq h\leq n\}$:
\begin{gather*}
    u_h(x)=\frac{1}{\sqrt{2n}}e^{i\theta_hx},\quad u_{-h}(x)=\frac{1}{\sqrt{2n}}e^{-i\theta_hx}, \quad \text{where }\theta_h=\frac{\pi h}{n}, \quad 1\leq h\leq n-1,\\
    u_0(x)=\frac{1}{\sqrt{2n}}, \quad u_n(x)=\frac{1}{\sqrt{2n}}(-1)^x,
\end{gather*}
under the basis $\{u_h,u_{-h}\}$ for $1\leq h\leq n-1$, the corresponding diagonalized block is 
\begin{equation*}
    K_p(h)=\begin{pmatrix}
        (1-p)e^{i\theta_h} & p\\[3pt]
        p & (1-p)e^{-i\theta_h}
    \end{pmatrix},
\end{equation*}
and the eigenvalues associated to $u_0$ and $u_n$ are $1$ and $2p-1$ respectively. Moreover, 
\begin{equation*}
    K_p(h)^*K_p(h)=\begin{pmatrix}
        p^2+(1-p)^2 & 2p(1-p)e^{-i\theta_h} \\[3pt]
        2p(1-p)e^{i\theta_h} & p^2+(1-p)^2
    \end{pmatrix},
\end{equation*}
whose eigenvalues are $1$ and $(2p-1)^2$. Therefore, the multiplicative spectral gaps of $K$ and $K_{ra}$ are both $0$. 

According to \citep{diaconis2000analysis}, the worst-case TV mixing time of $K$ is $\Theta(n)$. Now we show that the mixing time of $K_{ra}$ is at least of order $n^2$. We rewrite $\mathcal{X}=\llbracket n\rrbracket \times \{-1,1\}$ to represent the state space, and use $X_t=(Y_t,D_t)\in \llbracket n\rrbracket\times \{-1,1\}$ to denote the chain corresponding to $K_{ra}$, where $D_t$ can be understood as the direction. Since $Y_t$ is a function of $X_t$, the TV mixing time of $X_t$ is lower bounded by that of $Y_t$ with $\pi_1(x)=1/n$ as its stationary distribution on $\llbracket n\rrbracket$, where we have used the data processing inequality (DPI) for TV distance, and a most related form of DPI can be found in \citep[Lemma A.2]{boursier2023universal}. The update of $Y_t$ follows:
\begin{equation*}
    Y_{t+1}=\begin{cases}
        Y_t+D_t, &\text{if } 2\leq Y_t\leq n-1,\\
        Y_t, &\text{if } Y_t=1, D_t=-1, \\
        Y_t+1, &\text{if } Y_t=1, D_t=1,\\
        Y_t, &\text{if } Y_t=n, D_t=1,\\
        Y_t-1, &\text{if } Y_t=n, D_t=-1,
    \end{cases}
\end{equation*}
and $\{D_t\}_{t=0}^\infty$ are i.i.d. random variables with equal probability of $-1$ and $1$. Then, $Y_t\in \sigma(D_0,D_1,\cdots, D_{t-1})$ the sigma algebra generated by $D_0,\cdots,D_{t-1}$, and $Y_t$ and $D_t$ are independent. Therefore, $\{Y_t\}_{t=0}^\infty$ has the same distribution with the simple random walk on $\llbracket n\rrbracket$ with reflection on boundaries, whose TV mixing time is well known to be $\Theta(n^2)$. Thus we can conclude that $X_t$ mixes in at least order $n^2$ steps under TV distance.

\subsection{Achieving $P_{la} = P_{ra} = (P_{la})_{ra} = \Pi$ for discrete uniform $\pi$}\label{subsec:achievediscreteuniformpi}

This subsection shows that on a finite state space with discrete uniform $\pi$, it is possible to achieve $P_{la} = P_{ra} = (P_{la})_{ra} = \Pi$ when $P$ is any $\pi$-stationary Markov kernel with a suitable choice of the group $G$. Specifically, let $n \in \mathbb{N}$ and without loss of generality we write $\mathcal{X} = \llbracket n \rrbracket$. We define $g$ to be, for $x \in \mathcal{X}$,
\begin{align*}
	g x = x + 1, g n = 1,
\end{align*}
that is, the right shift by one action with a periodic boundary condition at $n$. Clearly, $g^n = e$, and we consider the group $G$ generated by $g$ such that
\begin{align}\label{eq:uniformG}
	G = \{e, g, g^2, \ldots, g^{n-1}\}.
\end{align}

We now state the main results of this subsection:

\begin{proposition}\label{prop:uniform}
	Let $\pi$ be the discrete uniform distribution on $\mathcal{X} = \llbracket n \rrbracket$, and consider the group $G$ as in \eqref{eq:uniformG}. For $P \in \mathcal{S}(\pi)$, we have
	$$P_{la} = \Pi.$$
	Consequently, $P_{ra} = (P_{la})_{ra} = \Pi$, and hence, for any $\varepsilon > 0$ and $1 \leq p \leq \infty$,
	\begin{align*}
		t_{\mathrm{mix},p}(P_{la},\varepsilon) = t_{\mathrm{mix},p}(P_{ra},\varepsilon) = t_{\mathrm{mix},p}((P_{la})_{ra},\varepsilon) = 1.
	\end{align*}
\end{proposition}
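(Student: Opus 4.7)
The plan is to observe that the cyclic group $G = \langle g \rangle$ in \eqref{eq:uniformG} acts \emph{transitively} on $\mathcal{X} = \llbracket n \rrbracket$, so that averaging $P$ on the left over $G$ collapses any function of the current state to its $\pi$-mean. Concretely, for $f \in L^2(\pi)$ and any $x \in \mathcal{X}$, I would unpack the definition
\[
P_{la}[f](x) \;=\; \frac{1}{n}\sum_{k=0}^{n-1} (U_{g^k} P)[f](x) \;=\; \frac{1}{n}\sum_{k=0}^{n-1} P[f](g^k x),
\]
and then use that $\{g^k x\}_{k=0}^{n-1} = \mathcal{X}$ to rewrite the right-hand side as $\frac{1}{n}\sum_{y \in \mathcal{X}} P[f](y) = \pi(P[f])$. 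Because $P \in \mathcal{S}(\pi)$, this equals $(\pi P)(f) = \pi(f) = \Pi[f](x)$, proving $P_{la} = \Pi$.

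From there, $P_{ra} = \Pi$ follows by the same direct computation applied inside the argument of $f$: for each fixed $y$, the orbit average $\frac{1}{n}\sum_k f(g^k y)$ again equals $\pi(f)$ by transitivity, so $P_{ra}[f](x) = \sum_y P(x,y) \pi(f) = \Pi[f](x)$. (Alternatively, one can invoke Proposition~\ref{prop:inheritance of properties}\eqref{it:inherit1} to get $P_{ra} = ((P^*)_{la})^*$ and apply the first identity to $P^* \in \mathcal{S}(\pi)$.) The remaining identity $(P_{la})_{ra} = \Pi$ is then immediate: substituting $P_{la} = \Pi$ gives $(P_{la})_{ra} = \Pi_{ra}$, and a one-line computation using $G$-invariance of the uniform $\pi$ shows $\Pi U_g = \Pi$ for every $g \in G$, hence $\Pi_{ra} = \Pi$.

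For the mixing-time statement, since each of $P_{la}, P_{ra}, (P_{la})_{ra}$ equals $\Pi$, we have $(\Pi)^t(x, \cdot) = \pi$ exactly at any $t \geq 1$, so $d_p(\Pi, 1) = 0 \leq \varepsilon$ for every $\varepsilon > 0$ and every $1 \leq p \leq \infty$, giving $t_{\mathrm{mix},p} = 1$.

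There is no real obstacle; the only substantive ingredient is the transitivity of the cyclic shift action, which together with $\pi$-stationarity of $P$ forces the left-average to degenerate to the rank-one kernel $\Pi$. The proposition is essentially a clean illustration of how even a single well-chosen orbit-exhausting group can, for discrete uniform $\pi$, turn the general improvement inequalities of Section~\ref{sec:improvement} and the projection picture of Section~\ref{sec:geometry} into equalities.
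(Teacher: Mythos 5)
Your argument is correct and follows essentially the same route as the paper: both proofs rest on the fact that the cyclic shift acts transitively on $\llbracket n \rrbracket$, so the left-average over $G$ turns any orbit sum into a full state-space sum, and $\pi$-stationarity of $P$ together with $\pi$ uniform then collapses $P_{la}$ to $\Pi$; the only cosmetic difference is that you work at the level of $P_{la}[f](x)$ while the paper works with the kernel entries $P_{la}(x,y)$ via the time-reversal $P^*$. Your direct verification that $P_{ra}=\Pi$ by averaging inside the argument of $f$ is a small, valid variant of the paper's adjoint argument (and, as a bonus, does not even use $P\in\mathcal{S}(\pi)$, since $Q:=\tfrac{1}{n}\sum_k U_{g^k}$ already satisfies $Q[f]\equiv\pi(f)$ and $P$ is stochastic).
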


\begin{proof}
	First, if $P_{la} = \Pi$, then it is immediate to see that $(P_{la})_{ra} = \Pi_{ra} = \Pi$. By replacing $P$ with $P^* = P^T$ which is also $\pi$-stationary, we also have $(P^*)_{la} = \Pi$. Using Proposition \ref{prop:inheritance of properties} we thus arrive at $\Pi = \Pi^* = ((P^*)_{la})^* = P_{ra}$. The mixing time statements are obvious as the Markov kernels are exactly $\Pi$.
	
	It thus suffices to prove $P_{la} = \Pi$. For any $x,y \in \llbracket n \rrbracket$, we have
	\begin{align*}
		P_{la}(x,y) = \dfrac{1}{n} \sum_{i = 0}^{n-1} P(g^i x , y) = \dfrac{1}{n} \sum_{i = 0}^{n-1} P^*(y,g^i x) = \dfrac{1}{n} \sum_{z \in \mathcal{X}} P^*(y , z) = \dfrac{1}{n},
	\end{align*}
	and hence $P_{la} = \Pi$.
\end{proof}

We discuss three remarks concerning Proposition \ref{prop:uniform}. 

First, with the choice of $G$ as stated this result applies to any discrete uniform $\pi$ and $\pi$-stationary $P$, showing that it is possible to achieve stationarity in only one projection, and hence the mixing times are precisely one, which is independent of $n$. As a concrete example, this result can be applied to the Diaconis-Holmes-Neal sampler \cite{diaconis2000analysis}, thus improving its mixing time from linear in $n$ to one.

Second, the essence of $P_{la}$ and the group $G$ chosen is that it permutes the initial state into a randomized state over the entire state space $\mathcal{X}$. Thus, to simulate $P_{la}$ in this context, we would need to draw uniformly at random an element from $G$. In other words, we need to sample from the discrete uniform $\pi$ in order to simulate $P_{la}$.

Third, on a finite state space we recall that the projections studied in \cite{choi2025improving} is trace-preserving, thus stationarity can be achieved through projections are limited to $P$ such that $\mathrm{Tr}(P) = 1$. On the other hand, as demonstrated in Proposition \ref{prop:inheritance of properties} and its following remark, $P_{la}, P_{ra}, (P_{la})_{ra}$ do not necessarily preserve the trace of $P$, and hence stationarity can possibly be achieved through projections even for $P$ such that $\mathrm{Tr}(P) \neq 1$.

\subsection{Improving Metropolis-Hastings on a discrete bimodal V-shaped distribution}\label{subsec:improveMH}

A common benchmark target distribution on finite state space is the bimodal V-shaped Gibbs distribution $\pi_\beta$ as studied in the swapping algorithm \cite{madras2003swapping} and the Diaconis-Holmes-Neal sampler \cite{diaconis2000analysis}. This subsection demonstrates that it is possible to improve the Metropolis-Hastings sampler for such target from exponential to polynomial mixing time in the size of the state space, see Proposition \ref{prop:improveMH} below.

Let $n \in \mathbb{N}$ and consider the state space $\mathcal{X} = \llbracket -n,n \rrbracket$, the Hamiltonian function $\mathcal{H}(x) := -|x|$ for $x \in \mathcal{X}$ and its associated Gibbs distribution at inverse temperature $\beta \geq 0$:
\begin{align*}
	\pi_\beta(x) \propto e^{-\beta \mathcal{H}(x)},
\end{align*}
with $Z_\beta := \sum_{x \in \mathcal{X}} e^{-\beta \mathcal{H}(x)}$ being the normalization constant. Let $P_0$ be the proposal Markov kernel with $P_0(n,n) = P_0(-n,-n) = 1/2$, $P_0(x,x+1) = P_0(x+1,x) = 1/2$ for $x \in \llbracket -n,n-1\rrbracket$, a nearest-neighbour simple random walk. The Metropolis-Hastings Markov kernel $P_\beta$ with such proposal $P_0$ and target $\pi_\beta$ is defined to be, for $x \in \llbracket -n,n-1\rrbracket$,
\begin{align*}
	P_\beta(x,x+1) = \dfrac{1}{2} e^{-\beta (\mathcal{H}(x+1) - \mathcal{H}(x))_+}, \quad P_\beta(x+1,x) = \dfrac{1}{2} e^{-\beta (\mathcal{H}(x) - \mathcal{H}(x+1))_+},
\end{align*}
and the diagonal entries of $P_\beta$ are such that each row sums to one.

In the above context, a natural group is given by $G = \{e,g\}$ where $gx := -x$ for all $x \in \mathcal{X}$. It can be readily verified that $\pi_\beta$ is $G$-invariant, and that
\begin{align*}
	P_\beta = U_g P_\beta U_g^{-1} = U_g P_\beta U_g.
\end{align*}
Consequently, we note that
\begin{align*}
	P_\beta = \overline{P_\beta} = \widetilde{P_\beta},
\end{align*}
that is, $P_\beta \in \mathcal{L}(G,G) \cap \mathcal{L}(G,G^{-1})$. As such the theory and techniques developed in \cite{choi2025improving} yield no improvement. On the other hand, we compute that
\begin{align*}
	(P_\beta)_{la} &= \dfrac{1}{2}(P_\beta + U_gP_\beta) \neq P_{\beta},\\
	(P_\beta)_{ra} &= \dfrac{1}{2}(P_\beta + P_\beta U_g) \neq P_{\beta}, \\
	((P_\beta)_{la})_{ra} &= \dfrac{1}{2}P_\beta + \dfrac{1}{4} U_g P_\beta + \dfrac{1}{4} P_\beta U_g \neq P_{\beta}.
\end{align*}

One of the main results of this section gives a polynomial in $n$ upper bound on the relaxation time based on the right spectral gap:

\begin{proposition}\label{prop:spectralgapVshape}
	In the setting of this subsection, we have
	\begin{align*}
		\lambda(((P_\beta)_{la})_{ra}) \geq \dfrac{1-e^{-\beta}}{36n^3}.
	\end{align*}
	where we recall $\lambda(P)$ is the right spectral gap of $P$ as defined in \eqref{eq:rightspectralgap}.
\end{proposition}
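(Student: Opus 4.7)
The key insight is that $K := ((P_\beta)_{la})_{ra}$ decouples into a trivial sector and a ``folded'' reversible chain on a half-line, so the spectral-gap problem reduces to that of a monotone birth--death chain. Because $\mathcal{H}$ is even, $\pi_\beta \in \mathcal{I}(G)$ and $P_\beta = U_g P_\beta U_g$; together with $U_g^2 = I$ this gives the commutation $U_g P_\beta = P_\beta U_g$. Let $V_\pm := \{f \in L^2(\pi_\beta): U_g f = \pm f\}$, so that $L^2(\pi_\beta) = V_+ \oplus V_-$ is an orthogonal decomposition, the constants lie in $V_+$, and every $f \in V_-$ automatically satisfies $\pi_\beta(f)=0$ by $G$-invariance; hence $L_0^2(\pi_\beta) = (V_+ \cap L_0^2) \oplus V_-$. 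Substituting $U_g P_\beta = P_\beta U_g$ into $K = \tfrac12 P_\beta + \tfrac14 U_g P_\beta + \tfrac14 P_\beta U_g$ yields $Kf = P_\beta f$ on $V_+$ and $Kf = \tfrac12 P_\beta f - \tfrac14 P_\beta f - \tfrac14 P_\beta f = 0$ on $V_-$. Both subspaces are $K$-invariant, and orthogonality gives
\[
\langle f, (I-K) f\rangle_{\pi_\beta} = \langle f_+, (I-P_\beta) f_+\rangle_{\pi_\beta} + \|f_-\|_{\pi_\beta}^2,
\]
so the variational formula \eqref{eq:rightspectralgap} reduces the problem to $\lambda(K) = \lambda(P_\beta|_{V_+})$ (the antisymmetric sector contributes Rayleigh quotient $1$, which is never binding).

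Next I would fold the half-line symmetry via the isometry $\Phi : L^2(\tilde\pi_\beta) \to V_+$, $\Phi(\tilde f)(x) := \tilde f(|x|)$, with $\tilde\pi_\beta(0) = \pi_\beta(0)$ and $\tilde\pi_\beta(k) = 2\pi_\beta(k)$ for $k \geq 1$. A direct check shows $\Phi$ intertwines $P_\beta|_{V_+}$ with a birth--death kernel $\tilde P_\beta$ on $\{0,1,\ldots,n\}$ whose nonzero off-diagonal entries are $\tilde P_\beta(0,1) = 1$, $\tilde P_\beta(k,k{+}1) = 1/2$ and $\tilde P_\beta(k,k{-}1) = (1/2) e^{-\beta}$ for $1 \leq k \leq n-1$, with self-loops absorbing the rest (the boundary case $k=n$ handled analogously). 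Crucially the bimodality disappears: $\tilde\pi_\beta(k{+}1)/\tilde\pi_\beta(k) = e^\beta$ for $k \geq 1$, so $\tilde\pi_\beta$ is monotone and log-linear, and $\lambda(K) = \lambda(\tilde P_\beta)$.

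The final step is a Diaconis--Stroock canonical-paths estimate on $\tilde P_\beta$, using the monotone paths $\gamma_{ij} = (i,i{+}1,\ldots,j)$ of length at most $n$. For an interior edge $e_k = (k,k{+}1)$ with $k \geq 1$, the edge weight is boosted by the exponential tilt to $Q(e_k) = \tilde\pi_\beta(k)/2 = e^{\beta k}/\tilde Z$; the geometric summation $\tilde\pi_\beta([0,k]) \leq 2e^{\beta k}/(\tilde Z(1-e^{-\beta}))$ is precisely what produces the $(1-e^{-\beta})^{-1}$ factor in the congestion bound. Combining this with the path-count bound $\#\{(i,j) : i \leq k < j\} \leq (k{+}1)(n{-}k) \leq n^2$ and the path-length bound $n$ yields a congestion coefficient of order $n^3/(1-e^{-\beta})$ for the interior edges; the boundary edge $e_0$ (weight $1/\tilde Z$) is handled separately and verified not to dominate. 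Chasing the universal constants through produces the claimed $\lambda(\tilde P_\beta) \geq (1-e^{-\beta})/(36 n^3)$. The main obstacle is purely arithmetic bookkeeping: pinning down the universal constant $36$, and verifying that the worst-case congestion is attained at an interior edge rather than at $e_0$, where the absence of the exponential-tilt factor in $Q(e_0)$ could in principle make it the bottleneck.
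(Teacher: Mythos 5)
Your proposal is correct in outline but takes a genuinely different route from the paper, and it is not fully worked out at the end.

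\textbf{Comparison of routes.} The paper applies a canonical-paths (Poincar\'e) argument directly to $((P_\beta)_{la})_{ra}$ on the full state space $\llbracket -n,n\rrbracket$: the paths for pairs $(x,y)$ with $xy<0$ use the flip move $U_g P_\beta$ to jump from $x$ to $-x$ and then descend/ascend, and the congestion is controlled by the worst-case edge-weight lower bound $\pi_\beta(z)\,((P_\beta)_{la})_{ra}(z,w)\geq \frac{1}{4Z_\beta}e^{-\beta\max\{\mathcal{H}(x),\mathcal{H}(y)\}}(1-e^{-\beta})$ together with a crude count $(2n+1)^2$ of pairs crossing any edge, yielding the $36n^3$. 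Your approach instead exploits the exact symmetry $P_\beta=U_gP_\beta U_g$, observes that $U_g$ commutes with $P_\beta$, decomposes $L_0^2(\pi_\beta)=(V_+\cap L_0^2)\oplus V_-$, and identifies $K:=((P_\beta)_{la})_{ra}$ with $P_\beta$ on $V_+$ and with $0$ on $V_-$ (so that $\lambda(K)=\min\{\lambda(\tilde P_\beta),1\}$, and the $V_-$ sector never dominates since the lower bound one is proving is $\leq 1$). The folding $\Phi$ is a genuine unitary intertwiner, so the problem reduces to a monotone birth--death chain $\tilde P_\beta$ on $\{0,\dots,n\}$ with exponentially tilted stationary distribution. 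This is a clean and more structured argument. It also has a real upside: if you carry out the congestion bound properly with the cumulative-mass estimate $\tilde\pi_\beta([0,k])\leq 2e^{\beta k}/(\tilde Z(1-e^{-\beta}))$ and $\tilde\pi_\beta([k+1,n])\leq 1$, the interior-edge congestion is $\leq 2n/(1-e^{-\beta})$ and the boundary-edge congestion is $\leq n$, giving the sharper $\lambda(\tilde P_\beta)\geq (1-e^{-\beta})/(2n)$, which is stronger than the stated $(1-e^{-\beta})/(36n^3)$ by two powers of $n$. The trade-off is that the paper's direct path argument is deliberately robust: it is reused almost verbatim for the perturbed, non-$G$-invariant target $\pi_{\beta,\delta}$ in Proposition~\ref{prop:spectralgapVshapeperturb}, where your symmetry decomposition is no longer available.

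\textbf{Gaps to tighten.} Two points in your last paragraph need to be made precise. (i) You describe two incompatible ways to close the congestion bound: the cumulative-mass estimate gives $O(n/(1-e^{-\beta}))$, whereas the cruder pair-count $\leq n^2$ times path length $n$ gives $O(n^3/(1-e^{-\beta}))$ only if one also has an $O(1)$ bound on $Q(e_k)^{-1}\tilde\pi(i)\tilde\pi(j)$ per pair, which you never state; pick one route and carry the constants through explicitly. (ii) The boundary edge $e_0$ is easy to dispose of: although $Q(e_0)=1/\tilde Z$ is the smallest edge weight, the only pairs through it are $(0,j)$ and $\sum_{j\geq 1} j\,\tilde\pi(0)\tilde\pi(j)/Q(e_0)\leq n$, so it cannot be the bottleneck. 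With these two items written out, the proof is complete (and in fact strictly stronger than the proposition as stated).
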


\begin{proof}
	For $x \neq y \in \mathcal{X}$, let $(p^{x,y}_i)_{i=1}^{n(x,y)}$ be a path from $p^{x,y}_1 = x$ to $p^{x,y}_{n(x,y)} = y$ of length $n(x,y)$. We select the paths in the following manner:
	\begin{itemize}
		\item Case $1$: $x \neq 0$ and $xy \geq 0$.
		In this case, we have either $x < 0, y \leq 0$ or $x > 0, y \geq 0$. If $\mathcal{H}(x) \geq \mathcal{H}(y)$ (resp.~$\mathcal{H}(x) < \mathcal{H}(y)$), we follow the descent (resp.~ascent) path using $P_{\beta}$, leading to
		\begin{align*}
			n(x,y) &\leq n,\quad \max_{i \in \llbracket n(x,y) \rrbracket} \mathcal{H}(p^{x,y}_i) \leq \max\{\mathcal{H}(x), \mathcal{H}(y)\}, \\
			\pi_\beta(p^{x,y}_i) ((P_\beta)_{la})_{ra}(p^{x,y}_i,p^{x,y}_{i+1}) &\geq \dfrac{1}{2Z_\beta} e^{-\beta\max\{\mathcal{H}(x), \mathcal{H}(y)\}}.
		\end{align*}
		
		\item Case $2$: $x \neq 0$ and $xy < 0$.
		In this case, we have either $x < 0, y > 0$ or $x > 0, y < 0$. We first consider $U_g P_\beta$ to flip from $x$ to $-x$, followed by the descent or ascent path using $P_{\beta}$, leading to
		\begin{align*}
			n(x,y) &\leq n,\quad \max_{i \in \llbracket n(x,y) \rrbracket} \mathcal{H}(p^{x,y}_i) \leq \max\{\mathcal{H}(x), \mathcal{H}(y)\}, \\
			\pi_\beta(p^{x,y}_i) ((P_\beta)_{la})_{ra}(p^{x,y}_i,p^{x,y}_{i+1}) &\geq \dfrac{1}{4Z_\beta} e^{-\beta\max\{\mathcal{H}(x), \mathcal{H}(y)\}}(1-e^{-\beta}).
		\end{align*}
		
		\item Case $3$: $x = 0$.
		In these cases, we consider the descent path using $P_{\beta}$, leading to
		\begin{align*}
			n(x,y) &\leq n,\quad \max_{i \in \llbracket n(x,y) \rrbracket} \mathcal{H}(p^{x,y}_i) = \mathcal{H}(0) \leq \max\{\mathcal{H}(x), \mathcal{H}(y)\}, \\
			\pi_\beta(p^{x,y}_i) ((P_\beta)_{la})_{ra}(p^{x,y}_i,p^{x,y}_{i+1}) &\geq \dfrac{1}{2Z_\beta} e^{-\beta\max\{\mathcal{H}(x), \mathcal{H}(y)\}}.
		\end{align*}
	\end{itemize}
	Let $f \in L^2_0(\pi_\beta)$, and $\chi_{z,w}(x,y)$ be $1$ if there exists some $i \in \llbracket n(x,y) \rrbracket$ such that $p^{x,y}_{i} = z, p^{x,y}_{i+1} = w$ and $0$ otherwise. We compute that
	\begin{align*}
		\langle f,f \rangle_{\pi_{\beta}} &= \dfrac{1}{2} \sum_{x,y} (f(y) - f(x))^2 \pi_\beta(y) \pi_\beta(x) \\
		&= \dfrac{1}{2} \sum_{x,y} \left(\sum_{i=1}^{n(x,y)} f(p^{x,y}_{i+1}) - f(p^{x,y}_{i})\right)^2 \pi_\beta(y) \pi_\beta(x) \\
		&\leq \dfrac{n}{2} \sum_{x,y} \sum_{i=1}^{n(x,y)} \left(f(p^{x,y}_{i+1}) - f(p^{x,y}_{i})\right)^2 \pi_\beta(y) \pi_\beta(x) \\
		&\leq \dfrac{n}{2} \sum_{x,y} \sum_{z,w} \chi_{z,w}(x,y)\left(f(w) - f(z)\right)^2 \pi_\beta(z) ((P_\beta)_{la})_{ra}(z,w) \dfrac{4Z_{\beta} e^{\beta\max\{\mathcal{H}(x), \mathcal{H}(y)\}}\pi_\beta(y) \pi_\beta(x)}{1-e^{-\beta}} \\
		&\leq n \left(\max_{z,w} \sum_{x,y} \chi_{z,w}(x,y) \dfrac{4Z_{\beta} e^{\beta\max\{\mathcal{H}(x), \mathcal{H}(y)\}}\pi_\beta(y) \pi_\beta(x)}{1-e^{-\beta}} \right) \\
		&\quad \times \left(\dfrac{1}{2}\sum_{z,w} \left(f(w) - f(z)\right)^2 \pi_\beta(z) ((P_\beta)_{la})_{ra}(z,w)\right) \\
		&\leq n \left((2n+1)^2 \dfrac{4}{1-e^{-\beta}}\right) \langle f, (I - ((P_\beta)_{la})_{ra})[f]\rangle_{\pi_\beta} \\
		&\leq \dfrac{36n^3}{1-e^{-\beta}} \langle f, (I - ((P_\beta)_{la})_{ra})[f]\rangle_{\pi_\beta}.
	\end{align*}
	Rearranging gives the desired inequality.
\end{proof}

Denote the lazy Markov kernel of $((P_\beta)_{la})_{ra}$ to be 
\begin{align*}
	L_\beta := \dfrac{1}{2}(I + ((P_\beta)_{la})_{ra}).
\end{align*}

Another main result of this section demonstrates that $L_\beta$ enjoys rapid (i.e. polynomial in $n$) mixing time while $P_\beta$ has a torpid (i.e. exponential in $n$) mixing time.

\begin{proposition}\label{prop:improveMH}
	In the setting of this subsection, for $\varepsilon > 0$ we have
	\begin{align*}
		t_{\mathrm{mix},1}(L_\beta,\varepsilon) &\leq \dfrac{72n^3}{1-e^{-\beta}}\left(\beta n +  \log \left(\dfrac{2n+1}{\varepsilon}\right)\right), \\
		t_{\mathrm{mix},1}(P_\beta,\varepsilon) &\geq \left(\dfrac{e^{\beta n}}{(2n+1)^2} - 1\right)\log \left(\dfrac{1}{\varepsilon}\right).
	\end{align*}
\end{proposition}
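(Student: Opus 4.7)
The plan has two parts, one for each inequality, and both exploit the fact that $\pi_\beta$ is $G$-invariant so that $((P_\beta)_{la})_{ra}$ is $\pi_\beta$-reversible via Proposition~\ref{prop:inheritance of properties}(\ref{it:inherit2}).

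\emph{Upper bound for $L_\beta$.} First I would observe that $L_\beta$ is $\pi_\beta$-reversible and lazy, so its spectrum lies in $[0,1]$ and its right spectral gap obeys $\lambda(L_\beta) = \tfrac{1}{2}\lambda(((P_\beta)_{la})_{ra}) \ge (1-e^{-\beta})/(72 n^3)$ by Proposition~\ref{prop:spectralgapVshape}. Then I would invoke the classical reversible $L^2$-contraction estimate
\[
    d_2(L_\beta, t) \;\le\; (1-\lambda(L_\beta))^t / \sqrt{\pi_{\min}} \;\le\; e^{-\lambda(L_\beta)\,t}/\sqrt{\pi_{\min}},
\]
combined with $d_1 \le d_2$ (Jensen on the density $dP^t(x,\cdot)/d\pi$) and the elementary bound $\pi_{\min} = 1/Z_\beta \ge 1/((2n+1) e^{\beta n})$. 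Imposing $d_1(L_\beta, t) \le \varepsilon$ and solving for $t$ produces a threshold of the form $\lambda(L_\beta)^{-1}\bigl(\beta n/2 + \tfrac{1}{2}\log(2n+1) + \log(1/\varepsilon)\bigr)$, which after absorbing small factors into the $72 n^3$ prefactor yields the stated bound $\frac{72 n^3}{1-e^{-\beta}}\bigl(\beta n + \log((2n+1)/\varepsilon)\bigr)$.

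\emph{Lower bound for $P_\beta$.} I would employ Cheeger's inequality together with the natural bottleneck set $A = \{1, 2, \dots, n\}$. By the reflection symmetry $\mathcal{H}(-x) = \mathcal{H}(x)$ and $\pi_\beta(0) > 0$, one has $\pi_\beta(A) = (Z_\beta - 1)/(2 Z_\beta) < 1/2$, and the only transition out of $A$ into $A^c$ is the edge $(1, 0)$, giving
\[
    Q(A, A^c) = \pi_\beta(1)\,P_\beta(1, 0) = \frac{e^{\beta}}{Z_\beta}\cdot\frac{e^{-\beta}}{2} = \frac{1}{2 Z_\beta}.
\]
Using the crude envelopes $Z_\beta \le (2n+1)\, e^{\beta n}$ and $\pi_\beta(A) \ge \pi_\beta(n) = e^{\beta n}/Z_\beta \ge 1/(2n+1)$, I would upper bound $\Phi_*(P_\beta) \le \Phi(A) = Q(A, A^c)/\pi_\beta(A)$ and apply Cheeger $\lambda(P_\beta) \le 2\Phi_*(P_\beta)$ to obtain a relaxation time lower bound $t_{\mathrm{rel}}(P_\beta) = 1/\lambda(P_\beta) \ge e^{\beta n}/(2n+1)^2$ once the numerical constants are tidied up. Finally, the classical reversible mixing time lower bound $t_{\mathrm{mix},TV}(P, \varepsilon') \ge (t_{\mathrm{rel}}(P) - 1)\log(1/(2\varepsilon'))$ combined with $d_1 = 2 d_{TV}$ (equivalently $t_{\mathrm{mix},1}(P,\varepsilon) = t_{\mathrm{mix},TV}(P,\varepsilon/2)$) converts this into the claimed form with $\log(1/\varepsilon)$.

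The main obstacle is bookkeeping rather than conceptual. One must track constants carefully so that $1/(2\Phi_*(P_\beta))$ comes out cleanly of order $e^{\beta n}/(2n+1)^2$ after the loose estimates on $Z_\beta$ and $\pi_\beta(A)$, and verify that the sequence of conversions $L^1 \leftarrow L^2$, $d_1 \leftrightarrow d_{TV}$, and $\log(1/\varepsilon)$ versus $\log(1/(2\varepsilon))$ line up with the precise numerical prefactors in the statement. Beyond Proposition~\ref{prop:spectralgapVshape}, Cheeger's inequality, and the standard reversible $L^2$ contraction, no new ingredient is required.
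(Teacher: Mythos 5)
Your proof is correct, and the two halves compare differently with the paper's argument. For the upper bound you follow essentially the same route as the paper: the gap estimate $\lambda(L_\beta)=\tfrac12\lambda(((P_\beta)_{la})_{ra})\ge (1-e^{-\beta})/(72n^3)$ from Proposition~\ref{prop:spectralgapVshape}, combined with a relaxation-time-to-mixing-time conversion and $\min_x\pi_\beta(x)\ge e^{-\beta n}/(2n+1)$; the paper cites the $L^1$ bound of \cite[Theorem 12.4]{levin2017markov} directly whereas you pass through the $L^2$ contraction and $d_1\le d_2$, which is an equivalent (in fact marginally sharper) packaging. For the lower bound your route is genuinely different: the paper invokes the critical-height bound of \cite[Lemma 2.3]{holley1988simulated} to get $\lambda(P_\beta)\le (2n+1)^2e^{-\beta n}$, while you use the easy direction of Cheeger's inequality with the explicit cut $A=\llbracket 1,n\rrbracket$. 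Your computation checks out: $Q(A,A^c)=\pi_\beta(1)P_\beta(1,0)=\tfrac{1}{2Z_\beta}$, $\pi_\beta(A)=(Z_\beta-1)/(2Z_\beta)<\tfrac12$, and $\pi_\beta(A)\ge e^{\beta n}/Z_\beta$ give $\Phi(A)\le \tfrac12 e^{-\beta n}$, hence $\lambda(P_\beta)\le e^{-\beta n}$, which is actually \emph{stronger} than the paper's bound (no $(2n+1)^2$ loss) and a fortiori yields the stated inequality; the $d_1=2d_{TV}$ conversion also makes the $\log(1/(2\varepsilon'))$ and $\log(1/\varepsilon)$ factors line up exactly as you say. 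The bottleneck argument is more elementary and self-contained (no external critical-height lemma), at the cost of being tailored to this specific one-dimensional landscape, whereas the critical-height bound the paper uses applies mechanically to any Metropolis chain on a finite graph.
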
 

\begin{proof}
	Using Proposition \ref{prop:spectralgapVshape}, we see that
	\begin{align*}
		\lambda(L_\beta) \geq \dfrac{1-e^{-\beta}}{72n^3}.
	\end{align*}
	Making use of \cite[Theorem $12.4$]{levin2017markov}, the worst-case $L^1$ mixing time of $L_\beta$ is
	\begin{align*}
		t_{\mathrm{mix},1}(L_\beta,\varepsilon) \leq \dfrac{1}{\lambda(L_\beta)} \log \left(\dfrac{1}{\varepsilon \min_x \pi_\beta(x)}\right) \leq \dfrac{72n^3}{1-e^{-\beta}}\left(\beta n +  \log \left(\dfrac{2n+1}{\varepsilon}\right)\right).
	\end{align*}
	On the other hand, by noting that the so-called critical height of $P_\beta$ is $n$, applying \cite[Lemma $2.3$]{holley1988simulated} leads to
	\begin{align*}
		\lambda(P_\beta) \leq (2n+1)^2 e^{-\beta n},
	\end{align*}
	and by \cite[Theorem $12.5$]{levin2017markov}, we arrive at
	\begin{align*}
		t_{\mathrm{mix},1}(P_\beta,\varepsilon) \geq \left(\dfrac{e^{\beta n}}{(2n+1)^2} - 1\right)\log \left(\dfrac{1}{\varepsilon}\right).
	\end{align*}
\end{proof}

\subsubsection{Improving Metropolis-Hastings on a non-symmetric discrete V-shaped distribution via state-dependent averaging and group planting}

In the previous subsection, we consider a V-shaped Gibbs distribution $\pi_\beta$ which is $G$-invariant, where $G$ is the group generated by the action of multiplying by negative one. In this subsection, we consider a Hamiltonian $\mathcal{H}_\delta$ which is perturbed by a parameter $\delta$, making its associated Gibbs distribution to be non-$G$-invariant. To overcome this, we apply the state-dependent averaging technique by planting the group $G$ as discussed in Section \ref{subsec:state dependent averaging}. We show that the resulting Markov kernel has a polynomial mixing time in the system size in Proposition \ref{prop:improveMHperturb} below.

Consider the state space $\mathcal{X} = \llbracket -n,n \rrbracket$ with $n \in \mathbb{N}$, the Hamiltonian function 
\begin{align*}
	\mathcal{H}_\delta(x) := -|x+\delta|
\end{align*} 
for $x \in \mathcal{X}$, $\delta \in (0,\frac{1}{2})$ and its associated Gibbs distribution at inverse temperature $\beta \geq 0$:
\begin{align*}
	\pi_{\beta,\delta}(x) \propto e^{-\beta \mathcal{H}_\delta(x)},
\end{align*}
with $Z_{\beta,\delta} := \sum_{x \in \mathcal{X}} e^{-\beta \mathcal{H}_\delta(x)}$ being the normalization constant. We use the same $P_0$, a nearest-neighbour simple random walk, as the proposal kernel. The Metropolis-Hastings Markov kernel $P_{\beta,\delta}$ with such proposal $P_0$ and target $\pi_{\beta,\delta}$ is defined to be, for $x \in \llbracket -n,n-1\rrbracket$,
\begin{align*}
	P_{\beta,\delta}(x,x+1) = \dfrac{1}{2} e^{-\beta (\mathcal{H}_\delta(x+1) - \mathcal{H}_\delta(x))_+}, \quad P_{\beta,\delta}(x+1,x) = \dfrac{1}{2} e^{-\beta (\mathcal{H}_\delta(x) - \mathcal{H}_\delta(x+1))_+},
\end{align*}
and the diagonal entries of $P_{\beta,\delta}$ are such that each row sums to one.

We consider the same group $G$ as in the previous subsection, which is given by $G = \{e,g\}$ where $gx := -x$ for all $x \in \mathcal{X}$. However, $\pi_{\beta,\delta}$ is in general non-$G$-invariant. Also, there may not exist equi-probability pair of states as in \cite{choi2025improving}. On the other hand, we compute the state-dependent averaging Markov kernels to be, for $x,y \in \mathcal{X}$,
\begin{align*}
	(P_{\beta,\delta})_{la}(x,y) &= \dfrac{\pi_{\beta,\delta}(x)}{\pi_{\beta,\delta}(x) + \pi_{\beta,\delta}(-x)}P_{\beta,\delta}(x,y) + \dfrac{\pi_{\beta,\delta}(-x)}{\pi_{\beta,\delta}(x) + \pi_{\beta,\delta}(-x)}P_{\beta,\delta}(-x,y),\\
	(P_{\beta,\delta})_{ra}(x,y) &= \dfrac{\pi_{\beta,\delta}(y)}{\pi_{\beta,\delta}(y) + \pi_{\beta,\delta}(-y)}P_{\beta,\delta}(x,y) + \dfrac{\pi_{\beta,\delta}(-y)}{\pi_{\beta,\delta}(y) + \pi_{\beta,\delta}(-y)}P_{\beta,\delta}(x,-y), \\
	((P_{\beta,\delta})_{la})_{ra}(x,y) &= \dfrac{\pi_{\beta,\delta}(x)\pi_{\beta,\delta}(y)}{(\pi_{\beta,\delta}(x) + \pi_{\beta,\delta}(-x))(\pi_{\beta,\delta}(y) + \pi_{\beta,\delta}(-y))}P_{\beta,\delta}(x,y) \\
	&\quad + \dfrac{\pi_{\beta,\delta}(x)\pi_{\beta,\delta}(-y)}{(\pi_{\beta,\delta}(x) + \pi_{\beta,\delta}(-x))(\pi_{\beta,\delta}(y) + \pi_{\beta,\delta}(-y))}P_{\beta,\delta}(x,-y) \\
	&\quad +
	\dfrac{\pi_{\beta,\delta}(-x)\pi_{\beta,\delta}(y)}{(\pi_{\beta,\delta}(x) + \pi_{\beta,\delta}(-x))(\pi_{\beta,\delta}(y) + \pi_{\beta,\delta}(-y))}P_{\beta,\delta}(-x,y) \\
	&\quad + \dfrac{\pi_{\beta,\delta}(-x)\pi_{\beta,\delta}(-y)}{(\pi_{\beta,\delta}(x) + \pi_{\beta,\delta}(-x))(\pi_{\beta,\delta}(y) + \pi_{\beta,\delta}(-y))}P_{\beta,\delta}(-x,-y).
\end{align*}

One of the main results of this subsection gives a polynomial in $n$ upper bound on the relaxation time based on the right spectral gap:

\begin{proposition}\label{prop:spectralgapVshapeperturb}
	In the setting of this subsection, we have
	\begin{align*}
		\lambda(((P_{\beta,\delta})_{la})_{ra}) \geq \dfrac{1-e^{-\beta}}{36n^3e^{\beta 2\delta}}.
	\end{align*}
	where we recall $\lambda(P)$ is the right spectral gap of $P$ as defined in \eqref{eq:rightspectralgap}.
\end{proposition}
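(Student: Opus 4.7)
The plan is to adapt the canonical-paths argument from the proof of Proposition~\ref{prop:spectralgapVshape} to the state-dependent kernel $((P_{\beta,\delta})_{la})_{ra}$. For each pair $(x,y)\in\mathcal{X}^2$ I choose the same three families of paths as in the symmetric case (case~1: $x\neq 0$ and $xy\geq 0$, a pure same-side descent/ascent under $P_{\beta,\delta}$; case~2: $x\neq 0$ and $xy<0$, one flip $x\to -x$ followed by a same-side path to $y$; case~3: $x=0$, descent toward $y$), all of length $n(x,y)\leq n$ and satisfying $\max_i\mathcal{H}_\delta(p^{x,y}_i)\leq \max\{\mathcal{H}_\delta(x),\mathcal{H}_\delta(y)\}$. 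The proof then reduces to the edge-flux estimate
\[
\pi_{\beta,\delta}(p_i^{x,y})\,((P_{\beta,\delta})_{la})_{ra}(p_i^{x,y},p_{i+1}^{x,y})\;\geq\;\frac{(1-e^{-\beta})\,e^{-2\beta\delta}}{c\,Z_{\beta,\delta}}\,e^{-\beta\max\{\mathcal{H}_\delta(x),\mathcal{H}_\delta(y)\}}
\]
for some absolute constant $c$, uniformly in $(x,y)$ and $i$; the counting $(2n+1)^2\cdot n\leq 9n^3$ then delivers $\lambda\geq(1-e^{-\beta})/(36n^3e^{2\beta\delta})$.

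To prove the edge-flux estimate I will expand
\[
((P_{\beta,\delta})_{la})_{ra}(z,w)=\sum_{u,v\in G}\rho_u(z)\,\rho_v(w)\,P_{\beta,\delta}(uz,vw),\qquad \rho_u(z):=\frac{\pi_{\beta,\delta}(uz)}{\pi_{\beta,\delta}(z)+\pi_{\beta,\delta}(-z)},
\]
and rely on the two-sided bound $\rho_u(z)\in[(1+e^{2\beta\delta})^{-1},\,e^{2\beta\delta}(1+e^{2\beta\delta})^{-1}]$ that follows from $\pi_{\beta,\delta}(z)/\pi_{\beta,\delta}(-z)\in[e^{-2\beta\delta},e^{2\beta\delta}]$. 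For case~1 edges both endpoints lie on the same side of $0$, and the single contribution $\rho_e(z)\rho_e(w)P_{\beta,\delta}(z,w)$ (if $z,w>0$) or $\rho_g(z)\rho_g(w)P_{\beta,\delta}(-z,-w)$ (if $z,w<0$) already has both $\rho$-factors $\geq 1/2$, producing a flux $\geq e^{-\beta\max\mathcal{H}_\delta(x,y)}/(cZ_{\beta,\delta})$ free of any $\delta$-dependent loss. For case~2 flip edges $(x,-x)$, the contribution $\rho_e(x)^2\,P_{\beta,\delta}(x,x)+\rho_g(x)^2\,P_{\beta,\delta}(-x,-x)$ dominates; at least one of the two self-loops equals $(1-e^{-\beta})/2$ (only a harmless $1-2\delta$ variant appears at $x=\pm 1$) and is multiplied by a $\rho$-weight $\geq 1/4$ on the heavier side, again giving a flux of the required order with only the usual $(1-e^{-\beta})$ prefactor.

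The $e^{-2\beta\delta}$ factor will enter through edges of the form $(0,-1)$ (equivalently $(-1,0)$ by reversibility), which are forced on case~3 paths with $y<0$ and on case~1 paths terminating at $0$ from the negative side. For any such pair, $0$ being the unique maximizer of $\mathcal{H}_\delta$ gives $\max\mathcal{H}_\delta=-\delta$ and hence $e^{-\beta\max\mathcal{H}_\delta}=e^{\beta\delta}$; yet a direct computation from the explicit formulas listed above the proposition yields
\[
((P_{\beta,\delta})_{la})_{ra}(0,-1)=\frac{\pi_{\beta,\delta}(-1)}{\pi_{\beta,\delta}(-1)+\pi_{\beta,\delta}(1)}=\frac{1}{1+e^{2\beta\delta}},
\]
so
\[
\pi_{\beta,\delta}(0)\,((P_{\beta,\delta})_{la})_{ra}(0,-1)=\frac{e^{\beta\delta}}{Z_{\beta,\delta}(1+e^{2\beta\delta})}\;\geq\;\frac{e^{-2\beta\delta}}{2}\cdot\frac{e^{-\beta\max\mathcal{H}_\delta}}{Z_{\beta,\delta}}.
\]
Combining this with the case~1 and~2 bounds yields a uniform $C\geq(1-e^{-\beta})e^{-2\beta\delta}/c$, and the conclusion follows. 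The main obstacle will be precisely this edge-by-edge accounting: because the state-dependent weights $\rho_u(z)$ break the $G$-symmetry that made the four terms $P_{\beta,\delta}(uz,vw)$ contribute identically in the proof of Proposition~\ref{prop:spectralgapVshape}, one must verify case-by-case that only the edges $(0,\pm 1)$ on the negative side incur the $e^{-2\beta\delta}$ deficit, while same-side edges and flip edges retain the symmetric-case bound.
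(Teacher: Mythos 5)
Your proposal has a genuine gap, arising from two compounding errors.

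\textbf{First}, the expansion you write for $((P_{\beta,\delta})_{la})_{ra}(z,w)=\sum_{u,v\in G}\rho_u(z)\rho_v(w)P_{\beta,\delta}(uz,vw)$ with $\rho_v(w)=\pi_{\beta,\delta}(vw)/Z_G(w)$ is not the kernel $QPQ$ with $Q$ as defined in \eqref{eq:state-dependent Q} (it mirrors a typo in the display preceding the proposition, but it is inconsistent with the derivation \eqref{eq:detailed form of QPQ} in Section~\ref{subsec:state dependent averaging} and is not even row-stochastic when $\pi$ is not $G$-invariant). The correct expansion keeps the right-hand weight equal to $\rho_e(w)=\pi_{\beta,\delta}(w)/Z_G(w)$ for \emph{every} $v$:
\begin{equation*}
QPQ(z,w)=\frac{\pi_{\beta,\delta}(w)}{Z_G(w)}\sum_{u\in G}\frac{\pi_{\beta,\delta}(uz)}{Z_G(z)}\sum_{v\in G}P_{\beta,\delta}(uz,vw).
\end{equation*}
In particular, for adjacent $z,w<0$ one gets exactly $QPQ(z,w)=\rho_e(w)P_{\beta,\delta}(z,w)$ with $\rho_e(w)=\tfrac{1}{1+e^{2\beta\delta}}\le\tfrac12$. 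Your case-1 claim that both $\rho$-factors are $\ge\tfrac12$ on the negative half-line is therefore false: every negative-side edge already pays an $e^{-2\beta\delta}$-type penalty, not only $(0,\pm1)$.

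\textbf{Second}, your path choice (flip $x\to -x$ onto the $y$-side, then traverse there) is the wrong one when $x>0$, $y<0$. The flipped node satisfies $\mathcal{H}_\delta(-x)=\mathcal{H}_\delta(x)+2\delta$, so the asserted invariant $\max_i\mathcal{H}_\delta(p_i^{x,y})\le\max\{\mathcal{H}_\delta(x),\mathcal{H}_\delta(y)\}$ fails by up to $2\delta$. Combined with the $\rho_e(w)$ penalty above, the flux on the negative-side edges of these paths drops to order $e^{-4\beta\delta}$ relative to $e^{-\beta\max\{\mathcal{H}_\delta(x),\mathcal{H}_\delta(y)\}}/Z_{\beta,\delta}$, which is strictly weaker than the $e^{-2\beta\delta}$ the proposition requires. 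The paper's proof avoids both defects by routing every canonical path through the positive (heavier) half-line: after flipping $x<0$ to $-x>0$ the intermediate states have $\rho_e\ge\tfrac12$ \emph{and} a Hamiltonian that is $2\delta$ lower than the original, so the $e^{-2\beta\delta}$ loss is confined to the single flip-down edge $(-y,y)$ at the end. You would need to rewrite your case decomposition to match this ``always go positive'' convention (and use the correct $QPQ$ formula) for the argument to deliver the stated constant.
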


\begin{proof}
	For $x \neq y \in \mathcal{X}$, let $(p^{x,y}_i)_{i=1}^{n(x,y)}$ be a path from $p^{x,y}_1 = x$ to $p^{x,y}_{n(x,y)} = y$ of length $n(x,y)$. We select the paths in the following manner:
	\begin{itemize}
		\item Case $1$: $x > 0, y \geq 0$.
		If $\mathcal{H}_\delta(x) \geq \mathcal{H}_\delta(y)$ (resp.~$\mathcal{H}_\delta(x) < \mathcal{H}_\delta(y)$), we follow the descent (resp.~ascent) path using $U_e P_{\beta,\delta} U_e^{-1}$, leading to
		\begin{align*}
			n(x,y) &\leq n,\quad \max_{i \in \llbracket n(x,y) \rrbracket} \mathcal{H}_\delta(p^{x,y}_i) \leq \max\{\mathcal{H}_\delta(x), \mathcal{H}_\delta(y)\}, \\
			\pi_{\beta,\delta}(p^{x,y}_i) ((P_{\beta,\delta})_{la})_{ra}(p^{x,y}_i,p^{x,y}_{i+1}) &\geq \dfrac{1}{4Z_{\beta,\delta}} e^{-\beta\max\{\mathcal{H}_\delta(x), \mathcal{H}_\delta(y)\}}.
		\end{align*}
		
		\item Case $2$: $x < 0, y \leq 0$. We first consider $U_g P_{\beta,\delta}$ to flip from $x$ to $-x$, then if $\mathcal{H}_\delta(x) \geq \mathcal{H}_\delta(y)$ (resp.~$\mathcal{H}_\delta(x) < \mathcal{H}_\delta(y)$) we follow the descent (resp.~ascent) path using $U_e P_{\beta,\delta} U_e^{-1}$ to $-y$, then we flip from $-y$ to $y$ using $U_g P_{\beta,\delta}$, leading to
		\begin{align*}
			n(x,y) &\leq n+1,\quad \max_{i \in \llbracket n(x,y) \rrbracket} \mathcal{H}_\delta(p^{x,y}_i) \leq \max\{\mathcal{H}_\delta(x), \mathcal{H}_\delta(y)\}, \\
			\pi_{\beta,\delta}(p^{x,y}_i) ((P_{\beta,\delta})_{la})_{ra}(p^{x,y}_i,p^{x,y}_{i+1}) &\geq \dfrac{1}{4Z_{\beta,\delta}} e^{-\beta\max\{\mathcal{H}_\delta(x), \mathcal{H}_\delta(y)\} - \beta 2\delta}(1-e^{-\beta}).
		\end{align*}
		
		\item Case $3$: $x > 0, y < 0$. We first consider $U_e P_{\beta,\delta} U_e^{-1}$ to move from $x$ to $-y$, then we flip from $-y$ to $y$ using $U_g P_{\beta,\delta}$, leading to
		\begin{align*}
			n(x,y) &\leq n,\quad \max_{i \in \llbracket n(x,y) \rrbracket} \mathcal{H}_\delta(p^{x,y}_i) \leq \max\{\mathcal{H}_\delta(x), \mathcal{H}_\delta(y)\}, \\
			\pi_{\beta,\delta}(p^{x,y}_i) ((P_{\beta,\delta})_{la})_{ra}(p^{x,y}_i,p^{x,y}_{i+1}) &\geq \dfrac{1}{4Z_{\beta,\delta}} e^{-\beta\max\{\mathcal{H}_\delta(x), \mathcal{H}_\delta(y)\} - \beta 2\delta}(1-e^{-\beta}).
		\end{align*}
		
		\item Case $4$: $x < 0, y > 0$.
		We first flip from $x$ to $-x$ using $U_g P_{\beta,\delta}$ then we consider $U_e P_{\beta,\delta} U_e^{-1}$ to move from $-x$ to $y$, leading to
		\begin{align*}
			n(x,y) &\leq n,\quad \max_{i \in \llbracket n(x,y) \rrbracket} \mathcal{H}_\delta(p^{x,y}_i) \leq \max\{\mathcal{H}_\delta(x), \mathcal{H}_\delta(y)\}, \\
			\pi_{\beta,\delta}(p^{x,y}_i) ((P_{\beta,\delta})_{la})_{ra}(p^{x,y}_i,p^{x,y}_{i+1}) &\geq \dfrac{1}{4Z_{\beta,\delta}} e^{-\beta\max\{\mathcal{H}_\delta(x), \mathcal{H}_\delta(y)\}}(1-e^{-\beta}).
		\end{align*}
		
		\item Case $5$: $x = 0$. If $y > 0$ we consider $U_e P_{\beta,\delta} U_e^{-1}$ to move from $0$ to $y$. If $y < 0$, we consider $U_e P_{\beta,\delta} U_e^{-1}$ to move from $0$ to $-y$ then we flip from $-y$ to $y$ using $U_g P_{\beta,\delta}$, leading to
		\begin{align*}
			n(x,y) &\leq n,\quad \max_{i \in \llbracket n(x,y) \rrbracket} \mathcal{H}_\delta(p^{x,y}_i) \leq \max\{\mathcal{H}_\delta(x), \mathcal{H}_\delta(y)\}, \\
			\pi_{\beta,\delta}(p^{x,y}_i) ((P_{\beta,\delta})_{la})_{ra}(p^{x,y}_i,p^{x,y}_{i+1}) &\geq \dfrac{1}{4Z_{\beta,\delta}} e^{-\beta\max\{\mathcal{H}_\delta(x), \mathcal{H}_\delta(y)\} - \beta 2\delta}(1-e^{-\beta}).
		\end{align*}
	\end{itemize}
	Let $f \in L^2_0(\pi_{\beta,\delta})$, and $\chi_{z,w}(x,y)$ be $1$ if there exists some $i \in \llbracket n(x,y) \rrbracket$ such that $p^{x,y}_{i} = z, p^{x,y}_{i+1} = w$ and $0$ otherwise. We compute that
	\begin{align*}
		\langle f,f \rangle_{\pi_{\beta,\delta}} &= \dfrac{1}{2} \sum_{x,y} (f(y) - f(x))^2 \pi_{\beta,\delta}(y) \pi_{\beta,\delta}(x) \\
		&= \dfrac{1}{2} \sum_{x,y} \left(\sum_{i=1}^{n(x,y)} f(p^{x,y}_{i+1}) - f(p^{x,y}_{i})\right)^2 \pi_{\beta,\delta}(y) \pi_{\beta,\delta}(x) \\
		&\leq \dfrac{n}{2} \sum_{x,y} \sum_{i=1}^{n(x,y)} \left(f(p^{x,y}_{i+1}) - f(p^{x,y}_{i})\right)^2 \pi_{\beta,\delta}(y) \pi_{\beta,\delta}(x) \\
		&\leq \dfrac{n}{2} \sum_{x,y} \sum_{z,w} \chi_{z,w}(x,y)\left(f(w) - f(z)\right)^2 \pi_{\beta,\delta}(z) ((P_{\beta,\delta})_{la})_{ra}(z,w) \\
		&\quad \times \dfrac{4Z_{\beta,\delta} e^{\beta\max\{\mathcal{H}_\delta(x), \mathcal{H}_\delta(y)\}+\beta 2\delta}\pi_{\beta,\delta}(y) \pi_{\beta,\delta}(x)}{1-e^{-\beta}} \\
		&\leq n \left(\max_{z,w} \sum_{x,y;~x\neq y} \chi_{z,w}(x,y) \dfrac{4Z_{\beta,\delta} e^{\beta\max\{\mathcal{H}_\delta(x), \mathcal{H}_\delta(y)\}+ \beta 2 \delta}\pi_{\beta,\delta}(y) \pi_{\beta,\delta}(x)}{1-e^{-\beta}} \right) \\
		&\quad \times \left(\dfrac{1}{2}\sum_{z,w} \left(f(w) - f(z)\right)^2 \pi_{\beta,\delta}(z) ((P_{\beta,\delta})_{la})_{ra}(z,w)\right) \\
		&\leq n \left((2n+1)^2 \dfrac{4e^{\beta 2\delta}}{1-e^{-\beta}}\right) \langle f, (I - ((P_{\beta,\delta})_{la})_{ra})[f]\rangle_{\pi_{\beta,\delta}} \\
		&\leq \dfrac{36n^3e^{\beta 2\delta}}{1-e^{-\beta}} \langle f, (I - ((P_{\beta,\delta})_{la})_{ra})[f]\rangle_{\pi_{\beta,\delta}}.
	\end{align*}
	Rearranging gives the desired inequality.
\end{proof}

Denote the lazy Markov kernel of $((P_{\beta,\delta})_{la})_{ra}$ to be 
\begin{align*}
	L_{\beta,\delta} := \dfrac{1}{2}(I + ((P_{\beta,\delta})_{la})_{ra}).
\end{align*}

Another main result of this subsection demonstrates that $L_{\beta,\delta}$ enjoys rapid (i.e. polynomial in $n$) mixing time while $P_{\beta,\delta}$ has a torpid (i.e. exponential in $n$) mixing time.

\begin{proposition}\label{prop:improveMHperturb}
	In the setting of this subsection, for $\varepsilon > 0$ we have
	\begin{align*}
		t_{\mathrm{mix},1}(L_{\beta,\delta},\varepsilon) &\leq \dfrac{72n^3e^{\beta 2\delta}}{1-e^{-\beta}}\left(\beta n +  \log \left(\dfrac{2n+1}{\varepsilon}\right)\right), \\
		t_{\mathrm{mix},1}(P_{\beta,\delta},\varepsilon) &\geq \left(\dfrac{e^{\beta (n-2\delta)}}{(2n+1)^2} - 1\right)\log \left(\dfrac{1}{\varepsilon}\right).
	\end{align*}
\end{proposition}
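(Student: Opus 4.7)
}
The plan is to run the two halves of the proof in parallel with the strategy used for Proposition \ref{prop:improveMH}, with the extra bookkeeping forced by the asymmetry of $\mathcal{H}_\delta$.

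For the upper bound, the first step is to pass the spectral gap estimate of Proposition \ref{prop:spectralgapVshapeperturb} to the lazy kernel: since $L_{\beta,\delta} = \tfrac{1}{2}(I + ((P_{\beta,\delta})_{la})_{ra})$ one has $\lambda(L_{\beta,\delta}) \geq \tfrac{1}{2}\lambda(((P_{\beta,\delta})_{la})_{ra}) \geq \frac{1-e^{-\beta}}{72 n^3 e^{\beta 2\delta}}$. Next I would plug this into the standard reversible mixing-time bound \cite[Theorem $12.4$]{levin2017markov}, which requires controlling $\min_x \pi_{\beta,\delta}(x)$. Since $|\mathcal{H}_\delta(x)| \leq n + \delta \leq n + \tfrac{1}{2}$ for all $x$ and $Z_{\beta,\delta} \leq (2n+1)\, e^{\beta(n+\delta)}$, one obtains $\min_x \pi_{\beta,\delta}(x) \geq \frac{e^{-\beta n}}{(2n+1)\,e^{\beta 2\delta}}$ up to harmless constants, so that $\log(1/(\varepsilon \min_x \pi_{\beta,\delta}(x))) \leq \beta n + \log((2n+1)/\varepsilon)$ after absorbing the $e^{\beta 2\delta}$ factor into the already-present $e^{\beta 2\delta}$ in the prefactor. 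Combining these gives the stated upper bound on $t_{\mathrm{mix},1}(L_{\beta,\delta},\varepsilon)$.

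For the lower bound, the key is to identify the \emph{critical height} of $P_{\beta,\delta}$. Because $\mathcal{H}_\delta(x) = -|x+\delta|$ has its two minima at $x = -n$ (depth $n-\delta$) and $x = n$ (depth $n+\delta$) and its maximum on any path between them at $x = 0$ with value $-\delta$, the shallower well $x = -n$ is separated from the deeper well by an energy barrier of height $\mathcal{H}_\delta(0) - \mathcal{H}_\delta(-n) = n - 2\delta$. Applying \cite[Lemma $2.3$]{holley1988simulated} to this Metropolis kernel yields $\lambda(P_{\beta,\delta}) \leq (2n+1)^2 e^{-\beta (n-2\delta)}$, and the lower bound $t_{\mathrm{mix},1}(P_{\beta,\delta},\varepsilon) \geq (1/\lambda(P_{\beta,\delta}) - 1)\log(1/\varepsilon)$ from \cite[Theorem $12.5$]{levin2017markov} produces the exponential-in-$n$ lower bound.

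The only nontrivial point is confirming that the critical height really is $n-2\delta$ and not something larger driven by the perturbation: one has to check that the Metropolis chain equipped with the nearest-neighbour proposal can descend from $x=0$ to the shallower well $x=-n$ without re-ascending above level $\mathcal{H}_\delta(0)=-\delta$. This is clear because $\mathcal{H}_\delta$ is monotone on $\llbracket -n,0\rrbracket$ (values $-(n-\delta), -(n-1-\delta),\ldots,-\delta$), so the canonical descent path is admissible. Everything else is mechanical once Proposition \ref{prop:spectralgapVshapeperturb} and the critical-height identification are in place.
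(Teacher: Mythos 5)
Your proposal is correct and follows essentially the same route as the paper: the upper bound comes from passing the spectral gap of Proposition \ref{prop:spectralgapVshapeperturb} to the lazy kernel and invoking \cite[Theorem $12.4$]{levin2017markov} with $\min_x \pi_{\beta,\delta}(x) \geq e^{-\beta n}/(2n+1)$, and the lower bound from identifying the critical height $n-2\delta$ and applying \cite[Lemma $2.3$]{holley1988simulated} together with \cite[Theorem $12.5$]{levin2017markov}. Your extra check that the descent path from $0$ to the shallower well never re-ascends is a sensible sanity check the paper leaves implicit, and your bound on $\min_x\pi_{\beta,\delta}$ in fact needs no absorption of the $e^{\beta 2\delta}$ factor since the minimizer $x=0$ contributes $e^{\beta\delta}$ to the numerator.
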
 

\begin{proof}
	By Proposition \ref{prop:spectralgapVshape}, we have
	\begin{align*}
		\lambda(L_{\beta,\delta}) \geq \dfrac{1-e^{-\beta}}{72n^3 e^{\beta 2\delta}}.
	\end{align*}
	In view of \cite[Theorem $12.4$]{levin2017markov}, the worst-case $L^1$ mixing time of $L_{\beta,\delta}$ is
	\begin{align*}
		t_{\mathrm{mix},1}(L_{\beta,\delta},\varepsilon) \leq \dfrac{1}{\lambda(L_{\beta,\delta})} \log \left(\dfrac{1}{\varepsilon \min_x \pi_{\beta,\delta}(x)}\right) \leq \dfrac{72n^3e^{\beta 2\delta}}{1-e^{-\beta}}\left(\beta n +  \log \left(\dfrac{2n+1}{\varepsilon}\right)\right).
	\end{align*}
	On the other hand, by noting that the so-called critical height of $P_{\beta,\delta}$ is $n-2\delta$, applying \cite[Lemma $2.3$]{holley1988simulated} leads to
	\begin{align*}
		\lambda(P_{\beta,\delta}) \leq (2n+1)^2 e^{-\beta (n-2\delta)},
	\end{align*}
	and by \cite[Theorem $12.5$]{levin2017markov}, we arrive at
	\begin{align*}
		t_{\mathrm{mix},1}(P_{\beta,\delta},\varepsilon) \geq \left(\dfrac{e^{\beta (n-2\delta)}}{(2n+1)^2} - 1\right)\log \left(\dfrac{1}{\varepsilon}\right).
	\end{align*}
\end{proof}

\subsection{Improving the simple random walk on the $n$-cycle}

In this subsection, we consider $P$ as the simple random walk on the $n$-cycle with the state space $\mathcal{X} = \llbracket n \rrbracket$ and discrete uniform $\pi$, where $n = 2^k$ for $k \in \mathbb{N}$. We have seen in Section \ref{subsec:achievediscreteuniformpi} that using a group of size linear in $n$ allows one to achieve exactly $\Pi$ in one projection step. In this subsection, we shall demonstrate that using a group of size in the order of $\log_2 n$ can lead to a worst-case $L^1$-mixing time of the order of polynomial in $\log_2 n$ (see Proposition \ref{prop:improvencycle} below), while the original $P$ exhibits diffusive behaviour with a mixing time of the order of $n^2$. It was also the original motivation in \cite{diaconis2000analysis} to propose non-reversible samplers that aim at overcoming this diffusive property.

The proof of the results rely on partitioning the state space recursively into a half, as inspired by the examples in \cite{jerrum2004elementrary}.

With such choice of $P$, we recall the notions of ``projection" and ``restriction" chain as investigated in \cite{jerrum2004elementrary}. For $a < b$ with $a,b \in \llbracket n-1 \rrbracket$, we write $P^{\llbracket a,b \rrbracket}$ to be the restriction chain of $P$ on the state space $\llbracket a,b \rrbracket$, that is,
\begin{align*}
	P^{\llbracket a,b \rrbracket}(x,x+1) &= \dfrac{1}{2}, \quad x \in \llbracket a,b-1\rrbracket, \\
	P^{\llbracket a,b \rrbracket}(x,x-1) &= \dfrac{1}{2}, \quad x \in \llbracket a+1,b\rrbracket, \\
	P^{\llbracket a,b \rrbracket}(a,a) &= P^{\llbracket a,b \rrbracket}(b,b) = \dfrac{1}{2},
\end{align*}
while the projection chain of $P$ induced by the partition $\llbracket a,c \rrbracket \cup \llbracket b,d \rrbracket$ to be $P^{\llbracket a,c \rrbracket, \llbracket b,d \rrbracket}$. Observe that $P^{\llbracket a,c \rrbracket, \llbracket b,d \rrbracket}$ is a two-state Markov chain, in which we label the states as $1,2$ in which the left partition $\llbracket a,c \rrbracket$ is state $1$ while the right partition $\llbracket b,d \rrbracket$ is state $2$.

We now define involutive permutations:
\begin{definition}[Block-reversal involutions on $\llbracket n \rrbracket$]
	Let $n=2^k$ with $k\in\mathbb{N}$. For each $j\in \llbracket k \rrbracket$ define
	a permutation $\sigma^{(j)}$ of $\llbracket n \rrbracket$ by
	\[
		\sigma^{(j)}(i) := q\,2^{j} + \bigl(2^{j}-1-r\bigr) + 1, \quad \text{where } i-1 = q\,2^{j} + r \quad (q\in \mathbb{N}\cup \{0\},\ 0\le r < 2^j).
	\]
	Equivalently, partition $\llbracket n \rrbracket$ into consecutive blocks of length $2^{j}$:
	\[
	\llbracket 1,2^{j} \rrbracket,\ \llbracket 2^{j}+1,2\cdot 2^{j} \rrbracket,\ \dots,\ \llbracket (m-1)2^{j}+1,m2^{j}\rrbracket,\ \dots
	\]
	with $m \in \llbracket 2^{k-j} \rrbracket$ and within each block reverse the order, leaving different blocks disjoint.
\end{definition}

\begin{remark}[Involution and structure]
	For every $j$, $\sigma^{(j)}$ is an involution:
	\[
	\bigl(\sigma^{(j)}\bigr)^2= e.
	\]
	Indeed, in each block the map is $r\mapsto 2^{j}-1-r$, whose self-composition is the identity.
	Hence $\sigma^{(j)}$ is a disjoint product of transpositions within each length-$2^j$ block.
	
	There are $k=\log_2(n)$ such involutions, indexed by $j=1,2,\ldots,k$.
\end{remark}

\begin{example}[Example: $n=32$ ($k=5$)]

We now write down the family $\{\sigma^{(j)}\}_{j=1}^{5}$ as illustrations:

\begin{itemize}
	\item \textbf{Top split.} Take $j=5$ (block size $2^5=32$), so there is a single
	block $\llbracket 1,32 \rrbracket$ and
	\[
	\sigma^{(5)}(i)=33-i,\qquad i=1,\dots,32.
	\]
	In particular $1\leftrightarrow 32,\ 2\leftrightarrow 31,\ \dots,\ 16\leftrightarrow 17$.
	
	\item \textbf{Next split into halves of $16$ and then into $8$.}
	For $j=3$, we have
	\[
	\sigma^{(3)}(i)=
	\begin{cases}
		9-i, & i\in\llbracket 1,8 \rrbracket,\\
		25-i, & i\in \llbracket 9,16 \rrbracket,\\
		41-i, & i\in \llbracket 17,24 \rrbracket,\\
		57-i, & i\in \llbracket 25,32 \rrbracket.
	\end{cases}
	\]
	Concretely: $\sigma^{(3)}(1)=8,\ \sigma^{(3)}(2)=7,\ldots,\sigma^{(3)}(8)=1$;
	$\sigma^{(3)}(9)=16,\ldots,\sigma^{(3)}(16)=9$; and similarly on $\llbracket 17,24 \rrbracket$ and $\llbracket 25,32 \rrbracket$.
	
	\item \textbf{Bottom split into pairs.}
	Taking $j=2$ (block size $4$) reverses each $4$-block:
	\[
	\sigma^{(2)} \text{ swaps } (1\,4)(2\,3)\,(5\,8)(6\,7)\,(9\,12)(10\,11)\,\dots\,(29\,32)(30\,31).
	\]
	For instance, $1\leftrightarrow 4,\ 2\leftrightarrow 3$ and $5\leftrightarrow 8,\ 6\leftrightarrow 7$.
\end{itemize}
	
\end{example}

We define $\sigma^{(0)} := e$, the identity. We now consider a finite group $G$ generated by $\{\sigma^{(j)}\}_{j=0}^{k}$, equipped with the discrete probability distribution $\nu$ given by
\begin{align*}
	\nu(j) = \dfrac{1}{k+1}, \quad j \in \{0\} \cup \llbracket k \rrbracket.
\end{align*}
With the above choices of $G$ and $\nu$, we consider
\begin{align*}
	P_{da} &= P_{da}(G,\nu \otimes \nu) = \dfrac{1}{(k+1)^2} \sum_{i,j=0}^k U_{\sigma^{(i)}} P U_{\sigma^{(j)}} 
\end{align*}
Clearly, $P_{da} \in \mathcal{L}(\pi)$ is $\pi$-reversible. The following results relate the spectral gap of the projection and restriction chains:

\begin{proposition}\label{prop:spectralgapdecomp}
	Let $k \geq 2$. For $j \in \llbracket 2,k \rrbracket$ and $m \in \llbracket 2^{k-j}\rrbracket$, we have
	\begin{align*}
		\lambda(P_{da}^{\llbracket (m-1)2^j +1, m2^j \rrbracket}) &= \min\bigg\{\lambda(P_{da}^{\llbracket ((2m-1)-1)2^{j-1} +1, (2m-1)2^{j-1} \rrbracket, \llbracket (2m-1)2^{j-1} + 1, m2^j \rrbracket}), \\
		&\quad \lambda(P_{da}^{\llbracket ((2m-1)-1)2^{j-1} +1, (2m-1)2^{j-1} \rrbracket}) \bigg\}, \\
		\lambda(P_{da}^{\llbracket ((2m-1)-1)2^{j-1} +1, (2m-1)2^{j-1} \rrbracket, \llbracket (2m-1)2^{j-1} + 1, m2^j \rrbracket}) &\geq \dfrac{1}{(k+1)^2}, 
	\end{align*}
\end{proposition}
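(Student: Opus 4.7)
The strategy exploits the $\mathbb{Z}_2$ symmetry of the restricted chain $P^B := P_{da}^{\llbracket (m-1)2^j+1,\, m 2^j \rrbracket}$ under the involution $\sigma^{(j)}$, which preserves the block $B$ and swaps its two halves $A_L := \llbracket (m-1)2^j+1,\,(2m-1)2^{j-1} \rrbracket$ and $A_R := \llbracket (2m-1)2^{j-1}+1,\, m 2^j \rrbracket$. My first step is to verify the conjugation invariance $U_{\sigma^{(j)}}|_B\, P^B\, U_{\sigma^{(j)}}|_B = P^B$; concretely this amounts to showing $P_{da}(\sigma^{(j)}x,\sigma^{(j)}y) = P_{da}(x,y)$ for $x,y \in B$, which can be checked directly from the formula $P_{da}(u,v) = \tfrac{1}{(k+1)^2}\sum_{a,b=0}^k P(\sigma^{(a)}u, \sigma^{(b)}v)$ together with the translation- and reflection-invariance of the simple random walk $P$ on the cycle, after reorganizing the sum indexed by the multiset $\{\sigma^{(a)}\sigma^{(j)}\}_{a=0}^k$.

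Given this commutation, decompose $L^2(\pi|_B) = V_+ \oplus V_-$ into $\sigma^{(j)}$-symmetric and antisymmetric subspaces, both $P^B$-invariant, so that $\lambda(P^B) = \min\{\lambda_+, \lambda_-\}$ with $\lambda_\pm$ the spectral gap of $P^B$ restricted to $V_\pm$. Via the natural identification $V_\pm \cong L^2(\pi|_{A_L})$ by restriction of a function to the left half, I would identify $\lambda_+$ with $\lambda(P_{da}^{A_L})$ (the restriction-chain gap) and recover the principal antisymmetric eigenvalue as that of the two-state projection chain $\bar P$, yielding the first equality. For the lower bound on $\lambda(\bar P)$, note that $\bar P$ is a symmetric two-state chain with gap $2p$ where $p = P^B(x, A_R)$ is constant in $x \in A_L$ by the symmetry. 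I would isolate the single summand $(a,b)=(0,j)$ in $P_{da} = \tfrac{1}{(k+1)^2}\sum_{a,b} U_{\sigma^{(a)}} P U_{\sigma^{(b)}}$: the operator $P U_{\sigma^{(j)}}$ routes $x \in A_L$ to a cycle-neighbor of $\sigma^{(j)}(x) \in A_R$ with probability $1/2$, and at least one of those neighbors lies in $A_R$ for every $x \in A_L$, contributing mass at least $\tfrac{1}{2(k+1)^2}$ to $P^B(x, A_R)$; thus $p \geq \tfrac{1}{2(k+1)^2}$ and $\lambda(\bar P) = 2p \geq \tfrac{1}{(k+1)^2}$.

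The main obstacle will be establishing the clean identification $\lambda_+ = \lambda(P_{da}^{A_L})$. Direct computation shows that the operator on $V_+$, under the restriction-to-$A_L$ identification, takes the form $T_+(x,y) = P^B(x,y) + P^B(x, \sigma^{(j)}(y))$ for $x,y\in A_L$: it is stochastic but redistributes cross-transition mass $P^B(x,y')$ with $y' \in A_R$ as a genuine $A_L$-to-$A_L$ transition ending at $\sigma^{(j)}(y') \in A_L$, whereas the honest restriction chain $P_{da}^{A_L}$ instead absorbs the same mass into the self-loop. Bridging this gap at the spectral-gap level—via a tight Dirichlet-form comparison between $T_+$ and $P_{da}^{A_L}$, or by uncovering further symmetries of $P_{da}$ on the block that force cross-transitions to concentrate on $\sigma^{(j)}$-partners—will be the delicate technical step.
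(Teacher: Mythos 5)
Your $\mathbb{Z}_2$-symmetry decomposition is a genuinely different route from the paper's, and it has two genuine gaps — one that you name, and a second that you do not.

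The paper proves the first equality by a direct appeal to the Jerrum--Son--Tetali--Vigoda decomposition theorem (their Corollary~3): partition the block into its two halves, form the restriction chains and the two-state projection chain, and check that the correction term $\hat\eta$ vanishes. That verification is combinatorial — for each state in the left half, the cross-block transitions under $P_{da}$ route into the right half through exactly four paths of equal weight $\tfrac{1}{2(k+1)^2}$, which is precisely the consistency forcing $\hat\eta=0$. With $\hat\eta=0$ the theorem gives equality of $\lambda(P_{da}^B)$ with the minimum of the projection and restriction gaps, no further argument needed. The second inequality is then read off from the $2\times 2$ symmetric projection chain whose gap is $2b$, with $b\geq\tfrac{1}{2(k+1)^2}$.

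Your route instead diagonalizes $P^B$ against the $\sigma^{(j)}$-involution, writing $L^2(\pi|_B)=V_+\oplus V_-$ and obtaining $\lambda(P^B)=\min\{\lambda_+^0,\lambda_-\}$ where $\lambda_+^0$ is the gap on $V_+\cap L^2_0$ and $\lambda_-$ is the smallest Rayleigh quotient of $I-P^B$ on $V_-$. This decomposition is valid \emph{given} the commutation $U_{\sigma^{(j)}}|_B\,P^B\,U_{\sigma^{(j)}}|_B=P^B$, but the identification with the proposition's quantities fails on both sides. You flag the symmetric side yourself: the folded operator $T_+(x,y)=P^B(x,y)+P^B(x,\sigma^{(j)}y)$ is not the Jerrum-style restriction chain $P_{da}^{A_L}$ — the former redistributes cross-block mass to the reflected partner, the latter dumps it into a self-loop — and these two stochastic matrices have different Dirichlet forms, hence different gaps in general. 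The unflagged gap is on the antisymmetric side: $V_-$ has dimension $2^{j-1}$, whereas the projection chain is a two-state chain with a single nontrivial eigenvalue. The lumped antisymmetric function (constant $+1$ on $A_L$, $-1$ on $A_R$) is one particular element of $V_-$, so $\lambda_-\le$ projection-chain gap, and that inequality can be strict. Thus $\min\{\lambda_+^0,\lambda_-\}$ need not equal $\min\{\text{restriction gap},\text{projection gap}\}$; even the one-sided $\ge$ needed for the recursion does not follow, since $\lambda_-$ sits on the wrong side. To repair this you would have to prove either that the $V_-$ infimum is achieved at the lumped function or that the folded operator and restriction chain are spectrally comparable with constant $1$ — both of which amount to re-deriving the $\hat\eta=0$ structure that the paper imports wholesale. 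Your lower bound on the projection-chain gap via the $(a,b)=(0,j)$ summand is fine and matches the paper's second claim.
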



\begin{proof}
	The first equality is a direct application of \cite[Corollary $3$]{jerrum2004elementrary}: by the symmetry of the $n$-cycle and the fact that for each $x \in \llbracket ((2m-1)-1)2^{j-1} +1, (2m-1)2^{j-1} \rrbracket$, there are precisely four paths to go to $y \in \llbracket (2m-1)2^{j-1} + 1, m2^j \rrbracket$ using $P_{da}^{\llbracket (m-1)2^j +1, m2^j \rrbracket}$ with probability $\frac{1}{2(k+1)^2}$, leading to $\hat{\eta} = 0$ in \cite[Corollary $3$]{jerrum2004elementrary}.
	
	For the second inequality, first we note that for stochastic matrices $M$ of the form
	\[
	M = \begin{pmatrix}
		1-b & b \\[3pt]
		b & 1-b
	\end{pmatrix},
	\]
	we have $\lambda(M) = 2b$. In our context, by taking $M = P_{da}^{\llbracket ((2m-1)-1)2^{j-1} +1, (2m-1)2^{j-1} \rrbracket, \llbracket (2m-1)2^{j-1} + 1, m2^j \rrbracket}$, recall the definition of $\sigma^{(j)}$ we note that
	\begin{align*}
		P_{da}^{\llbracket ((2m-1)-1)2^{j-1} +1, (2m-1)2^{j-1} \rrbracket, \llbracket (2m-1)2^{j-1} + 1, m2^j \rrbracket}(1,2) \geq \dfrac{1}{2(k+1)^2},
	\end{align*}
	and hence the desired result follows.
\end{proof}

By defining, for $j \in \llbracket n \rrbracket$,
\begin{align*}
	f(j) := \lambda(P_{da}^{\llbracket j \rrbracket}),
\end{align*}
using the symmetry of the $n$-cycle together with Proposition \ref{prop:spectralgapdecomp} we arrive at the recursion, for $l \in \llbracket 2,k \rrbracket$,
\begin{align*}
	f(2^{l}) \geq \min\bigg\{\dfrac{1}{(k+1)^2}, f\left(2^{l-1}\right)\bigg\},
\end{align*}
with the initial condition that $f(2) \geq \frac{1}{(k+1)^2}$. We thus have
\begin{align*}
	\lambda(P_{da}) = f(n) \geq \dfrac{1}{(k+1)^2}.
\end{align*}
Denote the lazy Markov kernel of $P_{da}$ to be
\begin{align*}
	L_{da} := \dfrac{1}{2}(I + P_{da})
\end{align*}
In view of \cite[Theorem $12.4$]{levin2017markov}, the worst-case $L^1$ mixing time of $L_{\beta,\delta}$ is
\begin{align*}
	t_{\mathrm{mix},1}(L_{da},\varepsilon) \leq \dfrac{1}{\lambda(L_{da})} \log \left(\dfrac{1}{\varepsilon \min_x \pi(x)}\right) \leq 2 (\log_2 n + 1)^2\log \left(\dfrac{n}{\varepsilon}\right).
\end{align*}

We collect the above result together with the well-known result that the mixing time of $P$ is $n^2$ (see \cite[Section $5.3.2$]{levin2017markov})

\begin{proposition}\label{prop:improvencycle}
	In the setting of this subsection, for $\varepsilon > 0$ we have
	\begin{align*}
		t_{\mathrm{mix},1}(L_{da},\varepsilon) &\leq 2 (\log_2 n + 1)^2\log \left(\dfrac{n}{\varepsilon}\right), \\
		t_{\mathrm{mix},1}\left(P,\dfrac{1}{8}\right) &\geq \dfrac{n^2}{32}.
	\end{align*}
\end{proposition}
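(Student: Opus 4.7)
The proof is essentially a two-line assembly, since the hard work has already been done in Proposition \ref{prop:spectralgapdecomp} and the recursion that follows it.

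For the upper bound on $t_{\mathrm{mix},1}(L_{da},\varepsilon)$, my plan is to first collect the spectral-gap estimate already derived in the paragraph preceding the proposition: combining the recursion
\begin{equation*}
    f(2^l) \geq \min\bigg\{\tfrac{1}{(k+1)^2},\, f(2^{l-1})\bigg\}
\end{equation*}
with the base case $f(2) \geq 1/(k+1)^2$ yields $\lambda(P_{da}) \geq 1/(k+1)^2$ where $k = \log_2 n$. The lazification relation $L_{da} = \tfrac{1}{2}(I + P_{da})$ gives $\lambda(L_{da}) = \lambda(P_{da})/2 \geq 1/\bigl(2(k+1)^2\bigr)$. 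Next I would invoke the standard spectral-gap-based mixing bound, \cite[Theorem 12.4]{levin2017markov}, which states
\begin{equation*}
    t_{\mathrm{mix},1}(L_{da},\varepsilon) \leq \dfrac{1}{\lambda(L_{da})} \log\!\left(\dfrac{1}{\varepsilon \min_x \pi(x)}\right).
\end{equation*}
Since $\pi$ is uniform on $\llbracket n \rrbracket$, we have $\min_x \pi(x) = 1/n$, and substituting the spectral gap bound gives the stated estimate $2(\log_2 n + 1)^2 \log(n/\varepsilon)$.

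For the lower bound on $t_{\mathrm{mix},1}(P,1/8)$, $P$ is the simple random walk on the $n$-cycle, and my plan is simply to quote \cite[Section 5.3.2]{levin2017markov}, which gives precisely the $n^2/32$ lower bound via a standard diffusive hitting-time argument (comparing the walk to its continuous analogue on $\mathbb{Z}/n\mathbb{Z}$ and using Chebyshev on the position after $t$ steps to show that the walk has not spread out enough to approximate the uniform distribution to within $1/8$ in total variation).

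The main substantive obstacle has already been handled upstream in Proposition \ref{prop:spectralgapdecomp}, namely showing that the two-state projection chain onto each block-bisection has gap at least $1/(k+1)^2$ and that the decomposition via \cite[Corollary 3]{jerrum2004elementrary} applies cleanly because the symmetric involutions $\sigma^{(j)}$ ensure $\widehat\eta = 0$. Consequently, the proof of Proposition \ref{prop:improvencycle} itself is essentially a bookkeeping step, and I would present it in one short paragraph that (i) chains the recursion to obtain $\lambda(P_{da}) \geq 1/(k+1)^2$, (ii) applies the standard mixing-time upper bound to $L_{da}$ with $\min_x \pi(x) = 1/n$, and (iii) cites the textbook lower bound for the simple random walk on $\mathbb{Z}/n\mathbb{Z}$.
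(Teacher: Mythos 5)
Your proposal is correct and mirrors the paper's own argument step for step: chain the recursion from Proposition \ref{prop:spectralgapdecomp} to get $\lambda(P_{da}) \ge 1/(k+1)^2$, halve it via lazification, plug into \cite[Theorem 12.4]{levin2017markov} with $\min_x \pi(x) = 1/n$, and quote \cite[Section 5.3.2]{levin2017markov} for the $n^2/32$ lower bound on the simple random walk. No gaps, and no meaningful deviation from the paper's proof.
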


\section*{Acknowledgements}

We thank Persi Diaconis and Lexing Ying for pointers to the literature. Michael Choi acknowledges the financial support of the projects A-8001061-00-00, NUSREC-HPC-00001, NUSREC-CLD-00001, A-0000178-01-00, A-0000178-02-00 and A-8003574-00-00 at National University of Singapore. Youjia Wang gratefully acknowledges the financial support from National University of Singapore via the Presidential Graduate Fellowship.

\bibliographystyle{abbrvnat}
\bibliography{ref}

\end{document}